\documentclass[11pt,leqno]{article}

\usepackage{amsthm,amsfonts,amssymb,amsmath,oldgerm,mathtools}

\usepackage{fullpage}
\usepackage{graphicx}
\usepackage{float}

\usepackage{mathrsfs}
\usepackage{xcolor}

\numberwithin{equation}{section}

\newcommand{\realpart}[1]{\operatorname{\rm Re}\!\left(#1\right)}
\newcommand{\impart}[1]{\operatorname{\rm Im}\!\left(#1\right)}

\newcommand{\eq}[2]{\begin{equation}\begin{split}#1\end{split}\label{#2}\end{equation}}

\newcommand{\RM}{{\mathbb{R}}}
\newcommand{\CM}{{\mathbb{C}}}

\newtheorem{theorem}{Theorem}[section]
\newtheorem{proposition}[theorem]{Proposition}
\newtheorem{corollary}[theorem]{Corollary}
\newtheorem{lemma}[theorem]{Lemma}
\newtheorem{remark}[theorem]{Remark}
\theoremstyle{definition}

\allowdisplaybreaks[3]

\newcommand{\ri}{\mathrm{i}}            

\title{Linear Asymptotic Stability of the Smooth 1-Solitons for the Degasperis-Procesi Equation}

\author{
Simon Deng\thanks{{{Department of Mathematics, University of Kansas, 1460 Jayhawk Boulevard, 
Lawrence, KS 66045, USA; simondeng@ku.edu}}}\and Mathew~A.~Johnson\thanks{Department of Mathematics, University of Kansas, 1460 Jayhawk Boulevard, 
Lawrence, KS 66045, USA; matjohn@ku.edu}\and 
St\'ephane Lafortune\thanks{Department of Mathematics, College of Charleston, Charleston, SC 29401, USA; lafortunes@cofc.edu}
}

\date{\today}

\begin{document}

\maketitle

\begin{abstract}
In this paper, we study the asymptotic stability of smooth 1-solitons in the Degasperis-Procesi (DP) equation.  Such solutions
necessarily exist on a non-zero background, and their spectral and orbital stability has previously been verified by 
Li, Liu \& Wu and by Lafortune \& Pelinovsky.  
Using the complete integrability of the DP equation to establish the strong spectral stability of smooth
solitary waves, namely that the origin is the only eigenvalue of the associated linearized
operator acting on $L^2(\RM)$ and that, moreover, in appropriate exponentially weighted spaces
the non-zero spectrum for the linearized operator admits a spectral gap away from the imaginary axis.
This spectral gap result {{is then}} upgraded to an exponential decay estimate on the semigroup associated with the linearized operator,
establishing a linear asymptotic stability result in exponentially weighted spaces.  Finally, we outline analytical challenges with
extending our result to the nonlinear level.
\end{abstract}
	

  \tableofcontents

\section{Introduction}

In this paper, we consider the asymptotic dynamics near smooth solitary wave solutions of the 
Degasperis-Procesi (DP) equation
\begin{equation}\label{DP} 
u_t - u_{txx} = 3u_x u_{xx} - 4uu_x + uu_{xxx}.
\end{equation}
The DP equation was derived in  \cite{dp} when Degasperis and Perocesi considered a family of equations to which they applied asymptotic integrability conditions up to the third order. In addition to the Kortweg-de Vries (KdV) and DP equations, in this family they additionally found the well-known Camassa-Holm equation
\begin{equation} \label{CH}
u_t - u_{txx} = 2u_x u_{xx} - 3uu_x + uu_{xxx},
\end{equation}
which was introduced in \cite{Cam,Cam2} as a model of strongly dispersive, unidirectional  shallow water waves.
Since then the DP equation has found analogous applications in the dynamics of surface water
waves with the same asymptotic accuracy as the CH equation \cite{constantin}. 
Further, both the CH and DP equations are known to be completely integrable
via the Inverse Scattering Transform, possess infinitely many conservation laws, are bi-Hamiltonian, and admit both smooth and non-smooth solutions such as 
peakons (i.e. continuous solutions with discontinuous yet bounded derivatives) and multi-peakons \cite{Cam,Cam2,honep,Constantini,Constantini2,dhh,Ma05}.

Despite their similarities, the CH and DP equations have significant mathematical differences.  While these differences are numerous, a significant one
that is particularly relevant in the current work is that the isospectral problem in the Lax pair for the CH is a second-order equation (hence self-adjoint
in an appropriate weighted space), while the associated isospectral problem for the DP equation is a third-order, non-self adjoint equation.  
Additionally, unlike the CH equation, the DP equation admits  discontinuous solutions \cite{Giuseppe1,Giuseppe2,Lundmark1} and, furthermore,  
peakon-antipeakon collisions lead to  ``shock-peakons'' \cite{Lundmark2007} that have jump singularities in $u$.

In this work, we are interested in the dynamics of smooth, solitary wave solutions of the DP equation when subjected to small perturbations.  
The spectral and nonlinear orbital stability of smooth solitary waves of the DP equation was obtained in \cite{Li2019,Li2024}.  Further, an energy stability criterion
for the orbital stability of such solutions (relying on a different Hamiltonian structure than in \cite{Li2024}) was derived for an entire family of 
generalized CH/DP equations in \cite{LP22smooth}, and a followup work \cite{LTC23} demonstrated that this criterion holds for all smooth solitary waves.  

\
\begin{center}
\emph{The purpose of this work is to study the asymptotic stability of smooth solitary waves of the DP equation.}
\end{center}
\

In the literature, there seems to be two main approaches to studying the asymptotic stability of solitary waves for nonlinear dispersive equations.  One dates back
to Pego \& Weinstein's seminal study of the asymptotic stability of solitary waves of the generalized KdV equation \cite{Pego1997}.  This method,
which seemingly has roots in classical dynamical systems theory, relies on a precise spectral and linear analysis of the associated 
linearized evolution equation about a solitary wave posed on exponentially weighted spaces, and then using 
nonlinear iteration and symmetry modulation techniques to establish nonlinear asymptotic stability on these weighted spaces. 
Pego \& Weinstein's methodology has since been applied in a number of settings: see \cite{Pego94,Simpson2008,Miller1996,Pego1997,Sattinger76},
for instance\footnote{We also note \cite{M01} where the author considers instead polynomial weighted spaces.}.  
The other method, which is seemingly has its roots in more classical mathematical analysis, aims to establish a so-called nonlinear Liouville property
around solitary waves.  The Liouville property essentially says (stated here very formally for the ease of exposition) that solutions that are near a solitary wave
and exhibit appropriate exponential decay properties must themselves be solitary waves.  This second methodology has proven to be extremely
powerful and is capable of establishing asymptotic stability in low regularity  settings (even $L^2(\RM)$).  The works are numerous, but see, for instance,
\cite{BGS15,CMPS16,KM09,Mol18,Molinet2019} and references therein.

In this work, we follow the dynamical systems approach of Pego \& Weinstein \cite{Pego1997}.  However,
we note {{that}} due to a subtle issue coming from the quasilinear nature of the DP equation that we are unable to obtain via this methodology
the asymptotic stability of DP solitary waves at the nonlinear level.  This issue will be discussed in Section \ref{S:nonlinear_try}, but effectively has to do
with a loss of regularity in a nonlinear iteration scheme that can seemingly not be regained through linear smoothing estimates or by
rescalings of the problem.  Nevertheless, a key step in the method of Pego \& Weinstein is to establish an appropriate asymptotic 
stability theory in exponentially weighted spaces at the linear level that accounts for drifts along the continuous symmetries of the governing PDE.  
This work itself is highly nontrivial and, while not a full nonlinear result, still sheds light on the local dynamics about solitary waves
of DP equation.

\
\begin{center}
\emph{In this work, we will study the linear asymptotic stability of smooth 1-soliton solutions of the DP equation in exponentially weighted spaces.}
\end{center}
\

The topic of asymptotic stability of smooth solutions in Camassa-Holm type equations is very much in its infancy.
In the very recent work \cite{CLLW24}, the authors have obtained the asymptotic stability of smooth N-Solitons for the CH equation \eqref{CH}.  
Their argument relies on the bi-Hamiltonian structure, the complete integrability of the CH equation, 
properties of the recursion operator associated to the CH hierarchy, and the idea
of a ``rigidity property" (similar to a nonlinear Liouville property)
introduced by by Molinet in \cite{Mol18,Molinet2019} in their asymptotic stability analysis of CH and DP peakon equations.  Additionally,
they rely on the fact that a formula for solutions of the linearization in the form of squared eigenfunctions from the Lax pair is known and their completeness has been
established \cite{Constantin2007}.  In our case, the only tool from complete integrability that 
we make use of is a squared eigenfunction connection for which, in the DP case, no formula 
or completeness result is currently known.  Furthermore, as discussed above, the nature of the Lax pairs of the CH and  the DP equations are very different,
with the latter having an isospectral problem that is third-order and non-self adjoint. 
Other than the current contribution, the only other related work in this direction is the also recently submitted work \cite{WLZ25} where they consider
the asymptotic stability of smooth solitary waves to the so-called b-family of equations 
\[
u_t-u_{xxt} = bu_xu_{xx}-(b+1)uu_x+uu_{xxx},
\]
in the case where $b<-1$.  Unlike the completely integrable cases of $b=2$ (CH) and $b=3$ (DP), the b-family is not integrable for $b<-1$.  In \cite{WLZ25},
the authors instead adapt the Martel-Merle framework for generalized KdV equations by combining a nonlinear Liuoville property for appropriately localized solutions along with
a refined spectral analysis of the associated linearized operator.  Notably, however, the work \cite{WLZ25} does not include the DP equation considered here.

As heuristic motivation for our result, we first note that smooth traveling wave solutions to the DP equation only exist on non-zero constant background states, i.e. they have
non-zero asymptotic limits at spatial infinity.  For a given admissible constant background state, the DP equation admits a two-parameter family of smooth
traveling wave solutions (the 1-solitons) which are parameterized by the wave speed $c$ and translation invariance.  
With this in mind, there are two main heuristic mechanisms driving the expected stability\footnote{These are essentially the same
heuristics outlined in \cite{Pego1997}.}.  First, on a fixed background we find that smaller solitary waves 
traveling slower than larger ones: this is quantified precisely in \eqref{e:max} below, which shows that the wave amplitude amplitude, 
defined as the difference between the global maximum and its asymptotic end state at spatial infinity,
is a strictly increasing function of the wave speed $c$.  Secondly, the group velocity of linear plane wave solutions of the DP equation (considered on the non-zero backgrounds
of solitary waves) is strictly negative.  Indeed, about a solitary wave with asymptotic end state $k>0$ and wave speed $c>0$ the group velocity
is
\[
c_g(\ell)=-c+\frac{k(4-\ell^2+\ell^4)}{(1+\ell^2)^2},
\]
which is readily seen to be strictly negative for all wavenumbers $\ell\in\RM$ when $k<c/4$ (a requisite in our upcoming existence theory\footnote{See Lemma \ref{L1} below.}).
As such, small amplitude dispersive waves should travel to the left in a coordinate frame moving with a given solitary wave.
Consequently, it is natural to expect that the dominant solitary wave will ``outrun" the distortions induced by small initial perturbations.  In particular,
if perturbations are measured in norms with exponential weights that vanish at $-\infty$ then smaller solitary waves as well as dispersive radiation
generated by small perturbations should asymptotically vanish as $t\to\infty$.  Indeed, note for instance that if $f\in L^2(\RM)$ and $c>0$ then
\[
\lim_{s\to\infty}\left\|e^{a\xi} f(\xi+st)\right\|_{L^2(\RM;d\xi)}=0
\]
for all ${{a>0}}$.

With the above motivation in mind, the outline for our work is as follows.  Letting $u_0$ be such a smooth soliton solution
with wave speed $c>0$, the linearization of \eqref{DP} about such a solution $u_0$ with wave speed $c>0$ is an evolution equation of the form
\begin{equation}\label{intro_lin}
v_t=\mathcal{A}[u_0]v
\end{equation}
where $\mathcal{A}[u_0]$ is an integro-differential operator with {{coefficients}} depending on the background solitary wave $u_0$.   Motivated
by the above, we consider \eqref{intro_lin} posed on an exponentially weighted space
\[
L^2_\alpha(\RM):=\left\{f : e^{\alpha \xi}f(\xi)\in L^2(\RM;d\xi)\right\}
\]
for appropriate $\alpha>0$ and equipped with its natural weighted norm.  The first primary goal of our analysis is to study the spectrum
of the linearized operator $\mathcal{A}[u_0]$ posed on such weighted spaces $L^2_\alpha(\RM)$.  To this end, we study the essential
and point spectrum separately.  

Regarding the essential spectrum, we note that (as is standard for Hamiltonian systems) when $\alpha=0$, corresponding to the essential
spectrum posed on $L^2(\RM)$, {{it}} is confined completely to the imaginary axis.  For $\alpha>0$ small, the essential spectrum on $L^2_\alpha(\RM)$
is moved into the open left half plane and remains so for all $0<\alpha<\alpha_{\rm crit}$ for some critical $\alpha_{\rm crit}\in(0,1)$ depending on the wave
speed $c>0$ and the asymptotic end state $k>0$ for the underlying solitary wave $u_0$.  
For $\alpha\in(\alpha_{\rm crit},1)$ the essential spectrum will extend into the open right half complex plane, indicating an essential instability.
Finally, although it is not our main concern here, we note that 
when $\alpha>1$ the essential spectrum is again confined to the open left half plane\footnote{As we will see, the essential spectrum
undergoes a singularity when $\alpha=1$.}.  This analysis is carried out in detail in Section \ref{S:ess_spec}.

Continuing, we consider the point spectrum of $\mathcal{A}[u_0]$ posed on $L^2_\alpha(\RM)$ when $0<\alpha<\alpha_{\rm crit}$.  We first note
that the spectral stability result of \cite{Li2019} implies that there are no $L^2_\alpha(\RM)$-eigenvalues of $\mathcal{A}[u_0]$
with strictly positive real part.  As a next main step, we establish the stronger spectral stability result that, in fact, there exists an $\eta>0$
such that
\[
\sigma_{L^2_\alpha}\left(\mathcal{A}[u_0]\right)\cap\left\{\lambda\in\CM:\realpart{\lambda}>-\eta\right\}=\{0\}
\]
and that, furthermore, $\lambda=0$ is an isolated $L^2_\alpha(\RM)$-eigenvalue with algebraic multiplicity two and geometric multiplicity one.
To do this, we first show that $\lambda=0$ is the only possible eigenvalue on the imaginary axis.  Using complete integrability of the DP equation,
we use a new relationship showing that one can construct solutions of the linearized equation \eqref{intro_lin} through an appropriate
quadratic combinations of the eigenfunctions of the Lax pair\footnote{Such a ``squared eigenfunction connection" is common in completely
integrable models.  See, for example, \cite{D1,D2,D3,D4,Calini11,Keith11,Ivey08,Ivey2016,Kapitula2002}.
In the case of the DP equation, however, such a connection seems to be new.}.  Unfortunately, however,
it is not known if all eigenfunctions can be constructed in this way: indeed, while the ``completeness" of the squared eigenfunction connection
is known in many cases, such a completeness result is not available for the DP equation.  Nevertheless, we can, in the genus 1 (i.e. traveling wave) case
obtain a complete set of solutions for the eigenvalue problem \eqref{intro_lin} from the squared eigenfunction connection when the spectral
parameter is restricted to the imaginary axis.  This, in turn, is achieved by studying the asymptotics of eigenfucntions fo the Lax pair at spatial infinity.
This analysis is performed in Section \ref{S:complete}.  Finally, in Section \ref{s42} we use analytic properties of the Evans function to
show there is a uniform gap between non-zero possible point spectrum and the imaginary axis.  

With the above spectral analysis in hand, we then turn in Section \ref{S:LinDecay} to studying the linear dynamics of \eqref{intro_lin} posed on $L^2_\alpha(\RM)$.
Through appropriate resolvent estimates, we first establish that $\mathcal{A}[u_0]$ generates a $C_0$-semigroup on $L^2_\alpha(\RM)$.
Using a result of Pr\"uss, we upgrade our spectral analysis on $L^2_\alpha(\RM)$ to establish our main result (which we state somewhat formally here).

\begin{theorem}\label{T:main}
Let $u_0(\cdot;k,c)$ be a smooth solitary wave solution of the DP equation \eqref{DP}, as constructed in Lemma \ref{L1}, and let $0<\alpha<\sqrt{(c-4k)/(c-k)}$ be fixed
and $\eta>0$ be such that
\[
{\rm Re}\left(\sigma_{L^2_\alpha(\RM)}\left(\mathcal{A}[u_0]\right)\setminus\{0\}\right)<-\eta.
\]
Further, let $\Pi:L^2_\alpha(\RM)\to {\rm gKer}\left(\mathcal{A}[u_0]\right)$ be the rank-2 spectral projection onto the generalized kernel
of $\mathcal{A}[u_0]$ on $L^2_\alpha(\RM)$.  Then there exists a constant $C>0$ such that
\[
\left\|e^{\mathcal{A}[u_0]t}\left(1-\Pi\right)f\right\|_{L^2_\alpha(\RM)}\leq C e^{-\eta t}\|f\|_{L^2_\alpha(\RM)}
\]
{{for $f\in L^2_\alpha(\RM)$ and}} for all $t>0$.
\end{theorem}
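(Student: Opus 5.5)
The plan is to derive the estimate from the Gearhart--Pr\"uss theorem: on a Hilbert space, a $C_0$-semigroup has growth bound at most $-\eta$ on an invariant closed subspace exactly when the resolvent is uniformly bounded on the half-plane $\{\realpart{\lambda}>-\eta\}$ for the restricted generator.

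\textbf{Step 1 (setup).} Write $\mathcal{A}=\mathcal{A}[u_0]$ on $X=L^2_\alpha(\RM)$. Conjugating by the weight $e^{\alpha\xi}$ gives an equivalent operator $\mathcal{A}_\alpha$ on $L^2(\RM)$. Since $1-\partial_\xi^2$ is invertible, the DP linearization can be written as a first-order transport part $-(c-u_0)\partial_\xi$ plus lower-order terms. These lower-order terms involve $(1-\partial_\xi^2)^{-1}$ and are bounded on $L^2$ and on $L^2_\alpha$ for $\alpha<1$, because the kernel $\tfrac12 e^{-|\xi|}$ stays integrable against $e^{\alpha|\xi|}$.

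\textbf{Step 2 (generation).} The transport part generates a $C_0$-group, since $c-u_0$ is smooth and bounded. By bounded perturbation, $\mathcal{A}$ generates a $C_0$-semigroup on $X$. Alternatively, one checks the Hille--Yosida resolvent bound for large real $\lambda$ via a weighted energy estimate, with $\realpart{\langle \mathcal{A}_\alpha v,v\rangle}\le M\|v\|^2$.

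\textbf{Step 3 (restriction).} Under the hypothesis, $0$ is an isolated eigenvalue of $\mathcal{A}$ with algebraic multiplicity two, spanned by $u_0'$ and $\partial_c u_0$. Hence $\Pi$ is the associated Riesz projection. It commutes with $e^{\mathcal{A}t}$, so $(1-\Pi)X$ is invariant, and the restricted generator $\mathcal{A}|_{(1-\Pi)X}$ has spectrum $\sigma(\mathcal{A})\setminus\{0\}$. This spectrum lies in $\{\realpart{\lambda}<-\eta\}$. One may need to shrink $\eta$ slightly, for instance $\eta'<\eta$ with $\sup\realpart{\sigma}<-\eta'$, but the statement's strict inequality lets one pass to the stated rate after the resolvent bound below.

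\textbf{Step 4 (uniform resolvent bound).} It remains to show
\[
\sup_{\realpart{\lambda}\ge-\eta}\|(\lambda-\mathcal{A})^{-1}(1-\Pi)\|<\infty .
\]
On compact subsets of the closed half-plane this is automatic by analyticity of the resolvent, since $0$ is removed by $(1-\Pi)$. The main obstacle is large $|\impart{\lambda}|$.

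For that regime, split $\mathcal{A}_\alpha=\mathcal{T}_\alpha+\mathcal{K}_\alpha$. Here $\mathcal{T}_\alpha=-(c-u_0)(\partial_\xi-\alpha)+b(\xi)$ is the transport part, and $\mathcal{K}_\alpha$ collects the smoothing nonlocal terms.

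For $\mathcal{T}_\alpha$, I would solve the first-order ODE $(\lambda-\mathcal{T}_\alpha)v=f$ explicitly via integrating factors. This gives a bound uniform in $\impart{\lambda}$, provided $\realpart{\lambda}$ exceeds the sup of the effective damping $-\alpha(c-u_0)+b$. The relevant asymptotic value at $\pm\infty$ is $\realpart$ of the high-frequency limit of the essential spectrum curve, which is $<-\eta$ by the choice $\alpha<\alpha_{\rm crit}$. Near the core, where $u_0$ differs from $k$, the value of $b$ could spoil this. To handle it, I would use a conjugation by a bounded, boundedly invertible multiplier $\phi(\xi)$ that solves a first-order ODE to flatten the effective damping to its asymptotic value. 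This is the standard trick for hyperbolic transport operators.

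Then write $\lambda-\mathcal{A}_\alpha=(\lambda-\mathcal{T}_\alpha)\bigl(1-(\lambda-\mathcal{T}_\alpha)^{-1}\mathcal{K}_\alpha\bigr)$. Because $\mathcal{K}_\alpha$ gains a derivative while $(\lambda-\mathcal{T}_\alpha)^{-1}$ is oscillatory, I expect
\[
\|(\lambda-\mathcal{T}_\alpha)^{-1}\mathcal{K}_\alpha\|\to0 \quad\text{as } |\impart{\lambda}|\to\infty .
\]
This should follow from an integration by parts in the explicit resolvent formula, or from $\|(\lambda-\mathcal{T}_\alpha)^{-1}\partial_\xi^{-1}\|=O(|\lambda|^{-1})$. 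A Neumann series then gives uniform bounds for $|\impart{\lambda}|\ge R$.

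Combining Steps 1--4, Gearhart--Pr\"uss yields the exponential decay at rate $\eta$ on $(1-\Pi)X$. I expect this high-frequency resolvent estimate, and in particular the control of the variable-coefficient damping near the soliton core, to be the hardest part.
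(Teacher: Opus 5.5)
Your proposal is correct, and its outer frame matches the paper's. You conjugate to $\mathcal{A}_\alpha[u_0]$ on $L^2(\RM)$, remove $\lambda=0$ with $1-\Pi$, bound the reduced resolvent uniformly on $\{\realpart{\lambda}>-\eta\}$ (on bounded sets by analyticity, as the paper also does), and conclude with Pr\"uss's theorem. The genuine differences are the large-$|\lambda|$ resolvent bound and the generation step.

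The paper splits $\mathcal{A}_\alpha[u_0]=\mathcal{A}_\alpha^\infty+(\mathcal{A}_\alpha[u_0]-\mathcal{A}_\alpha^\infty)$ around the constant-coefficient operator and estimates $(\lambda-\mathcal{A}^\infty_\alpha)^{-1}$ through an explicit Green's function. It then treats the entire localized difference, including its first-order part $-(u_0-k)(\partial_\xi-\alpha)$, as a Neumann-series perturbation, and obtains generation from Hille--Yosida. You instead keep the full variable-speed transport $\mathcal{T}_\alpha$ as the principal part and invert it exactly by integrating factors. You perturb only by the order $-1$ nonlocal term $\mathcal{K}_\alpha$, and get generation by bounded perturbation of a transport group, which is cleaner than feeding a single resolvent bound into Hille--Yosida.

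Your splitting is the more robust one. The weighted essential spectrum is asymptotically vertical, see \eqref{asb}, so the free resolvent does not decay as $|\impart{\lambda}|\to\infty$. For $\lambda=i\tau$ the distance to the curve \eqref{lmroots} tends to $\alpha(c-k)$. Hence $\|(\lambda-\mathcal{A}^\infty_\alpha)^{-1}\|$ stays bounded below, while $\|\partial_\xi(\lambda-\mathcal{A}^\infty_\alpha)^{-1}\|$ grows like $|\lambda|$. A localized first-order term is therefore not relatively small, and the rates $|\lambda|^{n-2}$ of Lemma \ref{L:free_resolv_bds} cannot hold there.

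In your version the smallness is genuine: $\|(\lambda-\mathcal{T}_\alpha)^{-1}\mathcal{K}_\alpha\|=O(|\lambda|^{-1})$. This follows from $(\lambda-\mathcal{T}_\alpha)^{-1}g=\lambda^{-1}g+\lambda^{-1}(\lambda-\mathcal{T}_\alpha)^{-1}\mathcal{T}_\alpha g$ together with $\mathcal{K}_\alpha:L^2\to H^1$. The price is only an $O(1)$ resolvent bound rather than the decaying one asserted in Proposition \ref{P:resolv_bds}, and $O(1)$ is all Pr\"uss's theorem needs.

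Two small points. First, from \eqref{DPL} one has $\mathcal{A}[u_0]=(c-u_0)\partial_\xi-u_0'-3\partial_\xi(1-\partial_\xi^2)^{-1}(u_0\,\cdot)$, so $\mathcal{T}_\alpha=(c-u_0)(\partial_\xi-\alpha)-u_0'$ with a plus sign. Your minus sign is inherited from \eqref{e:JLdef}; the damping $-\alpha(c-u_0)+b$ that you actually use is the correct one.

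Second, the core needs no special multiplier. Let $\mu=\alpha+(\lambda+u_0')/(c-u_0)$ be the integrating-factor exponent. Since $u_0'/(c-u_0)=-\partial_\xi\log(c-u_0)$ and $(c-u_0)^{-1}-(c-k)^{-1}\in L^1(\RM)$, you get $\realpart{\int_\xi^y\mu(s)\,ds}\ge(\alpha-\eta/(c-k))(y-\xi)-C$ uniformly for $\realpart{\lambda}\ge-\eta$ and $y>\xi$. The needed inequality $\eta<\Delta_\alpha<\alpha(c-k)$ is forced by the hypothesis, because the essential spectrum lies in $\sigma\setminus\{0\}$.
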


A precise statement of our main theorem is given in Proposition \ref{P:lin_decay}.  Before continuing, we interpret Theorem \ref{T:main} in terms 
of the expected dynamics of solitary waves.  As will be shown in Section \ref{S:pt_spec}, the generalized kernel of $\mathcal{A}[u_0]$ on $L^2_\alpha(\RM)$
is generated by the continuous symmetries of the underlying PDE: specifically, spatial translation invariance and wave speed variation.  In particular,
using the notation from Theorem \ref{T:main} we have that for each $f\in L^2_\alpha(\RM)$ there exists constants $\gamma,\beta\in\RM$
such that
\[
\Pi e^{\mathcal{A}[u_0]t}f= (\gamma-\beta t)u_0'+\beta\partial_cu_0. 
\]
Consequently, if $u(\cdot,t)$ is a solution of the DP equation \eqref{DP} with  $u(\cdot,0)-u_0(\cdot;k,c)$ sufficiently small in $L^2_\alpha(\RM)$ then it is natural to expect
the solution will evolve for large time as approximately
\begin{align*}
u(x-ct,t)&\approx u_0(x-ct;k,c)+e^{\mathcal{A}[u_0]t}\left(u_0(x-ct;k,c)-u(x-ct,0)\right)\\
&= u_0(x-ct;k,c)+e^{\mathcal{A}[u_0]t}\left(\Pi+(I-\Pi)\right)\left(u_0(x-ct;k,c)-u(x-ct,0)\right)\\
&\approx u_0(x-ct;k,c)+\left(\gamma+\beta t\right)u_0'(x-ct)+\beta\partial_cu_0(x-ct)+\mathcal{O}\left(e^{-\eta t}\right)\\
&\approx u_0\left(x-(c+\beta)t+\gamma;k,c+\beta\right)+\mathcal{O}\left(e^{-\eta t}\right){{,}}
\end{align*}
where here the  approximations are in the $L^2_\alpha(\RM)$-sense\footnote{Note that the first approximation above is just linear approximation,
while the final approximation holds by Taylor series.}.  That is, by Theorem \ref{T:main} it is natural to expect
that small perturbations of a given solitary wave $u_0(\cdot;k,c)$ will converge in $L^2_\alpha(\RM)$ 
to a nearby solitary wave with a slight change in phase and wave speed as $t\to\infty$.

\begin{remark}
As mentioned above, we are at the moment unable to extend the linear asymptotic stability result in
Theorem \ref{T:main} to a result at the nonlinear level.  This will be described in Section \ref{S:nonlinear_try} below.  We believe
the spectral and linear analysis here is an important first step in eventually establishing such
a nonlinear asymptotic stability result.
\end{remark}

\

\noindent
{\bf Acknowledgments:}   The authors would like to thank Ming Chen and
Zhong Wang for helpful conversations regarding their work \cite{CLLW24}.
SL would like to thank Rossen Ivanov for helpful discussions.
The work of MAJ was partially supported by the NSF under grant DMS-2510069.  {{The work of SL was partially supported by a Simons Foundation Collaboration Grant for Mathematicians (Award \# 420847)}}.

\section{Preliminary Properties of the DP Equation}\label{s1}

In this section, we review some elementary properties of DP equation \eqref{DP}.  We first discuss the existence of smooth solitary wave solutions of the DP equation \eqref{DP},
and see in particular that they necessarily exist on non-zero backgrounds (i.e. they have non-zero asymptotic values at spatial infinity).  
We then introduce the conserved quantities and Hamiltonian structures that will be useful in our work.

\subsection{Existence of Smooth Solitary Waves}

The existence of smooth solitary wave solutions of the DP equation \eqref{DP} has been presented in detail in \cite{LP22smooth}, and relies
on elementary phase plane analysis.  For completeness, we briefly recall the relevant parts of the existence theory.

Traveling wave solutions of \eqref{DP} correspond to solutions of the form
\[
u(x,t) = u_0(x-ct){{,}}
\]
where here $u_0(\cdot)$ is the wave profile profile and $c>0$ is the wave speed.  Introducing the traveling coordinate frame  $\xi=x-ct$, it follows
that the profile $u_0$ is necessarily a solution to the ODE
\[
-c(u_0-u_0'')'+4u_0u_0'=3u_0'u_0''+u_0u_0''',
\]
or, equivalently,
\begin{equation}\label{e:profile1}
-(c-u_0)(u_0-u_0'')'+3u_0'(u_0-u_0'')=0,
\end{equation}
where here prime denotes differentiation with respect to $\xi$.
As described in \cite{LP22smooth}, assuming that $u_0(\xi)<c$ for all $\xi\in\RM$ and multiplying the above by 
the integrating factor $(c-u_0)^2$ allows the above to be rewritten as
\begin{equation}\label{e:a}
(c-u_0)^3(u_0-u_0'')=a
\end{equation}
for some constant of integration $a$.  This, in turn, can be integrated again and rewritten in the 
quadrature form 
\begin{equation}\label{e:quad}
(u_0')^2=E-\left(-u_0^2+\frac{a}{(c-u_0)^2}\right)=:E-V(u_0;a,c).
\end{equation}
where again $E$ is an appropriate constant of integration.
By studying the properties of the effective potential $V$ above, the authors in \cite[Theorem 1.1]{LP22smooth} establish the following result.

\begin{lemma}
	\label{L1}
	For fixed $c>0$ there exists a one-parameter 
	family of smooth solitary waves of \eqref{DP} $u=u_0(\xi)$, $\xi=x-ct$, with profile $u_0 \in C^{\infty}(\mathbb{R})$ satisfying $u_0'(0) = 0$ and 
	$u_0(\xi) \to k$ as $|\xi| \to \infty$ if and only if the arbitrary parameter $k$ belongs 
	to the interval $(0, c/4)$. Moreover,  the solitary wave profile $u_0$ satisfies
	\begin{equation}
	\label{smooth-soliton}
	0 <	u_0(\xi) < c, \qquad  \mu(x)=u_0(\xi)-u_0''(\xi)>0, \qquad \xi \in \mathbb{R},
	\end{equation}
	and the family is smooth with respect to parameters $k$ in $(0, c/4)$ and $c>0$.
\end{lemma}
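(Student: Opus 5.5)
The plan is a phase plane analysis of the quadrature \eqref{e:quad}. We work in the region $u_0<c$, where the integrating factor $(c-u_0)^2$ is positive and \eqref{e:profile1} is equivalent to \eqref{e:a}. A solitary wave with $u_0\to k$ corresponds to a homoclinic orbit of the planar Hamiltonian system $u''=-\tfrac12 V'(u)$ to an equilibrium $u=k$. The equilibrium condition comes from \eqref{e:a} with $u_0''=0$, giving $a=k(c-k)^3$; this fixes $a$ in terms of $(k,c)$. The energy is then $E=V(k;a,c)=-k^2+k(c-k)$.

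\textbf{Step 1: saddle condition.} Compute
\[
V''(k)=-2+\frac{6a}{(c-k)^4}=-2+\frac{6k}{c-k}.
\]
The equilibrium is a saddle exactly when $V''(k)<0$, i.e. $k<c/4$. If $k>c/4$ it is a center, so no homoclinic orbit exists. The degenerate case $k=c/4$ should be excluded by the next step, since no admissible turning point remains. The condition $k>0$ is needed for $a>0$. For $k\le 0$ I would check directly that $V$ has the wrong shape: for $a\le 0$ the potential has no local maximum with a turning point on the correct side below $c$. This gives the ``only if'' direction.

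\textbf{Step 2: turning point.} For $0<k<c/4$, use the shape of $V$ on $(-\infty,c)$. Since $V'(u)=-2u+2a(c-u)^{-3}$, we have $V'''>0$ there. Hence $V'$ is convex and has at most two zeros: $k$ (a local maximum of $V$) and some $k_2\in(k,c)$ (a local minimum). Moreover $V\to+\infty$ as $u\to c^-$. So there is a unique $u_{\max}\in(k_2,c)$ with $V(u_{\max})=E$ and $V<E$ on $(k,u_{\max})$.

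The level set $\{(u')^2=E-V\}$ then contains a homoclinic loop in $k<u\le u_{\max}<c$. It is smooth because $V$ is smooth away from $u=c$. We normalize by translation so that the maximum is at $\xi=0$, giving $u_0'(0)=0$. This yields $k<u_0<c$, hence $0<u_0<c$. Exponential approach to $k$ follows from the saddle structure.

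\textbf{Step 3: positivity and smoothness.} Positivity of $\mu$ follows from \eqref{e:a}: $\mu=a(c-u_0)^{-3}>0$ because $a>0$ and $u_0<c$.

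For smooth dependence on $(k,c)$, I would invoke the implicit function theorem. The relevant condition is $V'(u_{\max})\neq 0$, which holds since $u_{\max}>k_2$. Smooth dependence of the ODE flow and of the stable/unstable manifolds of a hyperbolic saddle on parameters then gives smoothness of the profile, normalized by $u_0'(0)=0$.

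\textbf{Main obstacle.} I expect the hardest part to be the ``only if'' direction together with the global shape of $V$. This means ruling out homoclinics for $k\ge c/4$, or for $k\le 0$, and also ruling out orbits that reach $u=c$, where the reduction to \eqref{e:a} breaks down. The convexity of $V'$ on $(-\infty,c)$ is the key tool I would use for this.
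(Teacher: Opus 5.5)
Your proposal is correct and takes essentially the paper's route: the paper does not reprove the lemma but cites \cite[Theorem 1.1]{LP22smooth}, which is exactly this analysis of the effective potential $V$ in \eqref{e:quad}, with $a=k(c-k)^3$, $E=kc-2k^2$, the saddle condition $V''(k)<0\iff k<c/4$, and $\mu=a(c-u_0)^{-3}>0$. To close the ``only if'' loose ends you flag: since $[(c-u_0)^3\mu]'=(c-u_0)^2\bigl((c-u_0)\mu'-3u_0'\mu\bigr)$, the first integral $(c-u_0)^3\mu=a$ holds for every smooth solution without dividing by $c-u_0$, so $u_0$ can never reach $c$ when $a\neq 0$; also, your sentence ``if $k>c/4$ it is a center'' holds only for $c/4<k<c$ --- for $k>c$ the equilibrium is a saddle, but $a<0$ then gives $V''<-2$ on $(c,\infty)$, so there is no turning point and hence no homoclinic orbit.
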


The above  establishes the existence of a two-parameter family (up to spatial translations) 
of smooth solitary wave solutions $u_0(\cdot;k,c)$ with wave speed $c>0$ and asymptotic end state $k\in(0,c/4)$.  In particular, note that all such solitary
waves necessarily exist on non-zero backgrounds.  Indeed, it is well known that non-trivial smooth solutions of \eqref{DP} that go to zero as $x\to\pm\infty$
do not exist and, instead, one finds peaked solutions (so-called Peakons): see, for example, \cite{LP22smooth} and references therein.  
In our work, we exclusively study the stability of smooth structures,
and hence necessarily work with solitary waves on a non-zero background.  
An implicit formula for that solution is obtained in \cite[Section 5]{Constantin2010}, although we will not make use of this representation here.

We further note that if $u_0(\cdot;k,c)$ is a smooth solitary wave as constructed in Lemma \ref{L1}, then taking $\xi\to\infty$ in \eqref{e:a} and \eqref{e:quad}
yield
\[
a= k(c-k)^3,~~E=kc-2k^2.
\]
In particular, from \eqref{e:quad} we then find that the global maximum of $u_0$ is given explicitly as
\begin{equation}\label{e:max}
\max_{\xi\in\RM}u_0(\xi;k,c) = c-k+\sqrt{ck}.
\end{equation}
which, for a fixed $k\in\RM$, is clearly a strictly increasing function of $c$ for $c>4k$.  It follows that on a fixed background $k>0$ that 
taller solitary waves $u_0(\cdot;k,c)$  move faster than shorter ones\footnote{More precisely, the wave height, defined as $\max_{\xi\in\RM}u_0(\xi;k,c)-k$,
is a strictly increasing function of the wave speed.}.  As mentioned in the introduction, this is a key piece of evidence that the solitary
waves of the DP equation \eqref{DP} are asymptotically stable in exponentially weighted spaces.

\

Before continuing, we make a series of remarks.

\begin{remark}
Associated with the smooth, even solitary waves $u_0$ from Lemma \ref{L1}, we define their momentum densities as $\mu=u_0-u_0''$.  By construction,
such $\mu$ are smooth, even functions satisfying $\mu'(0)=0$, $\mu''(0)<0$, and $\mu(\xi)>0$ for all $\xi\in\RM$.  Further, we note that
\[
\lim_{\xi\to\infty}\mu(\xi)=\lim_{\xi\to\infty}u_0(\xi)=k.
\]
In our work we will make use of both $u_0$ and its momentum density $\mu$.  
\end{remark}

\begin{remark}\label{R:decay}
It is important to observe that the solitary wave $u_0=u_0(\cdot;k,c)$ exhibits exponential decay to its asymptotic end state as $\xi\to\pm\infty$.
Indeed, from \eqref{e:profile1} we see that\,\footnote{This follows from linearizing \eqref{e:profile1} about the constant end state and using the Stable and Unstable Manifold Theorems.  Note the decay rates $r_{\pm}$ are precisely determined by the roots of $P(0,r)=0${{, with $P$ defined in \eqref{Pdef}}}.}
\[
 u_0(\xi;k,c)-k\sim e^{r_\pm \xi}\;\;\text{as}\;\;\xi\to\mp\infty,\;\;\text{where}\;\;r_\pm=\pm\sqrt{\frac{c-4k}{c-k}}.
\]
This observation will be important in our forthcoming work, where we will be considering $u_0$ and its derivatives in exponentially
weighted $L^2$-spaces.
\end{remark}

\begin{remark}
The smooth solitary waves $u_0(\cdot;k,c)$ constructed above can be mapped to smooth solitary wave solutions on a zero background to a (modified) DP equation
via a straightforward Galilean transformation. 
Indeed, a quick calculation shows that if $u_0(\cdot;k,c)$ is a smooth solution of the profile equation \eqref{e:profile1} as constructed above, then
\[
\psi_0(x;k,c) = \frac{2}{3}\left(u_0\left(x;k,c\right)-k\right)
\]
is a smooth solution of
\[
\frac{2}{3}(c-k)\left(\psi-\psi_{xx}\right)_x+3\psi_x\psi_{xx}-4\psi\psi_x+\psi\psi_{xxx}-2k\psi_x=0
\]
satisfying $\lim_{x\to\pm\infty}\psi_0(x)=0$.  In particular, the profiles $\psi_0$ generate smooth solitary traveling wave solutions on a zero background
to the (modified) DP equation
\begin{equation}\label{DP_mod}
\left(U_t-U_{txx}\right)=3U_x U_{xx}-4UU_x+UU_{xxx}-2kU_x
\end{equation}
with wave speed $\frac{2}{3}(c-k)$.  It follows that known results for the stability of (zero background) smooth solitary wave solutions
of \eqref{DP_mod}, including those in \cite{Li2019,Li2024}, immediately translate to stability results of the (non-zero background) smooth solitary waves $u_0(\cdot;k,c)$
of \eqref{DP} constructed above.
\end{remark}

\subsection{Conservation Laws \& Hamiltonian Structures}

The DP equation is well-known to be bi-Hamiltonian and we will make use of both Hamiltonian structures in our work.  The fact that the solitary
waves $u_0(\cdot;k,c)$ exist on non-zero backgrounds, however, requires some care.  To discuss the local dynamics
in \eqref{DP} about $u_0(\cdot;k,c)$ we introduce the set
\[
X_k:=\left\{u-u_{xx}-k\in H^1(\RM):u(x)-u_{xx}(x)>0~{\rm for~all}~x\in\RM\right\}
\]
and note that \eqref{DP} is known to be globally well-posed for $X_k$: see, for example, the discussion in \cite[Section 1]{LP22smooth}.  For solutions
in $X_k$ it well-known that the DP equation admits the conserved quantities
\begin{equation}\label{e:cons1}
\left\{\begin{aligned}
\mathcal{H}(u)&=-\frac{1}{6}\int_\RM\left(u^3-3k^2(u-k)-k^3\right)dx\\
Q(u)&=\frac{1}{2}\int_\RM(u-k)\cdot(1-\partial_x^2)(4-\partial_x^2)^{-1}(u-k)dx
\end{aligned}\right.
\end{equation}
expressed in terms of the solution variable $u$,  as well as the conserved quantities
\begin{equation}\label{e:cons2a}
E(m)=\int_\RM \left(m-k\right)~dx,~~F_1(m)=\int_\RM\left(m^{1/3}-k^{1/3}\right)dx
\end{equation}
and
\begin{equation}\label{e:cons2b}
F_2(m)=\int_\RM\left[\left(\frac{m_x^2}{9m^2}+1\right)m^{-1/3}-k^{-1/3}\right]dx,
\end{equation}
which are expressed in terms of the momentum density variable $m=u-u_{xx}$ and the integrands are normalized to ensure integrability.  One can readily verify that each of the above functionals are well-defined
and smooth on $X_k$.

The conserved quantities $\mathcal{H}$ and $Q$ are related to the Hamiltonian formulation of \eqref{DP} given by
\begin{equation}\label{e:ham1}
u_t=J\frac{\delta\mathcal{H}}{\delta u}
\end{equation}
where
\[
J=\partial_x\left(1-\partial_x^2\right)\left(4-\partial_x^2\right)^{-1}.
\]
In this formulation, the conserved quantity $Q$ is related to the translation invariance as
\begin{equation}\label{e:translation}
J\frac{\delta Q}{\delta u}=u_x.
\end{equation}
In \cite{Li2019,Li2024}, the authors studied the spectral and orbital stability of smooth solitary wave solutions of the DP equation using the above Hamiltonian structure.  Importantly,
in \cite[Lemma 4.2]{Li2019} the following key monotonicity result was established as part of their spectral stability study:

\begin {lemma} \label{L:non-deg}
For each $c>0$ and $k\in(0,c/4)$ we have
\[
\frac{\partial}{\partial c} Q\left(u_0(\cdot;k,c)\right)>0.
\]
\end{lemma}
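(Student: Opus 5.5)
The plan is to turn the nonlocal functional $Q(u_0(\cdot;k,c))$ into a one-dimensional elementary integral in the amplitude variable, and then check the sign of its $c$-derivative directly.

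\emph{Step 1: remove the nonlocal operator.} Write $w=u_0-k$. Substituting $u(x,t)=u_0(x-ct)$ into \eqref{e:ham1} and using \eqref{e:translation} gives $J\bigl(\frac{\delta\mathcal{H}}{\delta u}+c\frac{\delta Q}{\delta u}\bigr)(u_0)=0$. The kernel of $J$ on bounded functions consists of constants, so the bracket is constant. Here $\frac{\delta\mathcal{H}}{\delta u}=-\frac12(u^2-k^2)$ and $\frac{\delta Q}{\delta u}=(1-\partial_x^2)(4-\partial_x^2)^{-1}(u-k)$. Both tend to zero as $\xi\to\pm\infty$ by Remark \ref{R:decay}, so the constant vanishes and
\[
(1-\partial_\xi^2)(4-\partial_\xi^2)^{-1}w=\frac{u_0^2-k^2}{2c}.
\]
Inserting this into the definition of $Q$ in \eqref{e:cons1} yields the local formula
\[
Q(u_0)=\frac{1}{4c}\int_\RM (u_0-k)^2(u_0+k)\,d\xi .
\]
I would double-check this identity directly from \eqref{e:profile1}, since it is the key simplification.

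\emph{Step 2: pass to the amplitude variable.} Use the quadrature \eqref{e:quad} with $a=k(c-k)^3$ and $E=kc-2k^2$. The quartic $(c-u)^2(E-V(u))$ has a double root at $u=k$ and simple roots at $M=c-k+\sqrt{ck}$ (see \eqref{e:max}) and $m=c-k-\sqrt{ck}$. Matching the leading coefficient and the constant term should give
\[
E-V(u)=\frac{(u-k)^2(M-u)(u-m)}{(c-u)^2}.
\]
By evenness of $u_0$ and $d\xi=du/\sqrt{E-V}$ on each half-line, this gives
\[
Q(u_0)=\frac{1}{2c}\int_k^M\frac{(u-k)(u+k)(c-u)}{\sqrt{(M-u)(u-m)}}\,du .
\]
The double root at $k$ cancels, which is exactly why the integral converges at $u=k$.

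\emph{Step 3: differentiate in $c$.} The integrand is a cubic polynomial divided by the square root of a quadratic, so the integral is elementary. The substitution $u=\frac{M+m}{2}+\frac{M-m}{2}\sin\theta$ maps it to an integral over $\theta\in[\theta_k,\pi/2]$ with a polynomial-in-$\sin\theta$ integrand. Here $M+m=2(c-k)$ and $M-m=2\sqrt{ck}$, and $\sin\theta_k=(k-c+k)/\sqrt{ck}=-(c-2k)/\sqrt{ck}$, which lies in $(-1,0)$ precisely because $c>4k$. Evaluating gives a closed form in $c$ and $k$: polynomial and square-root terms in $c,k$ plus an $\arcsin$ (or $\arccos$) term in $\theta_k$. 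It is convenient to set $c=kt^2$ with $t>2$, which turns this into $k^2$ times a function of $t$ alone. Differentiating in $c$ at fixed $k$ is then a one-variable computation. Alternatively, one can differentiate under the integral in the $\theta$-form, using $\partial_c\theta_k$, since the endpoint $\pi/2$ is fixed.

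\emph{Main obstacle.} The real difficulty is the final sign verification: showing that the resulting function of $t$ (equivalently of $c/k$) has positive derivative for all $t>2$. I would first simplify the derivative to the form $A(t)+B(t)\arccos\bigl((t^2-2)/t\bigr)$-type. Then I would show positivity by checking the limit as $t\to2^+$, the behaviour as $t\to\infty$, and a monotonicity argument for the ratio $A/B$. If that proves messy, the fallback is to differentiate under the integral in the $u$-variable, split off the manifestly positive contributions coming from $\partial_cM>0$ and from the factor $(c-u)$, and bound the remainder.
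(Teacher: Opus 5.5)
\textbf{Verdict: genuine gap.} Your strategy is sound, but Step 2 is wrong and makes Step 3 unworkable as written. The sign check, which is the actual content of the lemma, is also left undone.

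\textbf{Step 1} is correct. Integrating \eqref{e:profile1} once gives $c(1-\partial_\xi^2)(u_0-k)=\tfrac12(4-\partial_\xi^2)(u_0^2-k^2)$, hence $Q(u_0)=\frac{1}{4c}\int_\RM(u_0-k)^2(u_0+k)\,d\xi$.

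\textbf{Step 2: sign and turning point.} The polynomial $(c-u)^2(E-V(u))=(c-u)^2(E+u^2)-a$ has leading coefficient $+1$, so the factorization is
\[
E-V(u)=\frac{(u-k)^2(u-M)(u-m)}{(c-u)^2},
\]
not one with $(M-u)(u-m)$. More importantly, the turning point of $u_0$ is $m=c-k-\sqrt{ck}$, not $M$. One has $m-k=(\sqrt c-2\sqrt k)(\sqrt c+\sqrt k)>0$ and $M-c=\sqrt k(\sqrt c-\sqrt k)>0$, so $k<m<c<M$. The bound $u_0<c$ from Lemma \ref{L1} then excludes $M$. The formula \eqref{e:max} you relied on has a sign misprint; the requirement that the amplitude vanish as $c\to4k^+$ confirms this.

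With your integrand the square root is imaginary on $(k,m)$. Your claim $\sin\theta_k\in(-1,0)$ is also false: $(2k-c)/\sqrt{ck}<-1$ precisely when $c>4k$, another symptom of the same error. The correct reduction is
\[
Q(u_0)=\frac{1}{2c}\int_k^m\frac{(u^2-k^2)(c-u)}{\sqrt{(M-u)(m-u)}}\,du .
\]
The right substitution is hyperbolic, $u=c-k-\sqrt{ck}\cosh s$ with $0\le s\le\operatorname{arccosh}\bigl((c-2k)/\sqrt{ck}\bigr)$, not trigonometric.

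\textbf{The sign verification.} You leave this as a plan with fallbacks; the paper does not prove it either, but imports it from \cite[Lemma 4.2]{Li2019}. Once the setup is corrected it stops being an obstacle. Carrying out the integration gives
\[
Q(u_0(\cdot;k,c))=\frac{(2c^2-7ck+2k^2)\sqrt{(c-k)(c-4k)}}{12c}-\frac{k^2}{4}\operatorname{arccosh}\Bigl(\frac{c-2k}{\sqrt{ck}}\Bigr).
\]
The transcendental term has a $c$-independent coefficient, so its $c$-derivative is algebraic and everything factors:
\[
\frac{\partial}{\partial c}Q(u_0(\cdot;k,c))=\frac{(2c+k)(c-k)^{3/2}(c-4k)^{1/2}}{6c^2}>0 .
\]
You can check this against two limits:
\begin{itemize}
\item $Q\sim c^2/6$ as $c\to\infty$, which is the DP peakon value;
\item $Q\sim\frac{3\sqrt3}{16}\sqrt k\,(c-4k)^{3/2}$ as $c\to4k^+$, which is the $\mathrm{sech}^2$ small-amplitude value.
\end{itemize}
No $A+B\arccos(\cdot)$ sign analysis is needed. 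Once the turning point is fixed, your route gives a short self-contained proof in place of the citation.
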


The above will be key to our forthcoming spectral analysis, specifically, to our characterization of the generalized kernel
associated to the linearization of \eqref{DP} about a given solitary wave $u_0$.  

\begin{remark}
An important observation is that the above Hamiltonian structure is not sufficient to control the $H^1$-norm of solutions of \eqref{DP}, 
which may be desired in a full nonlinear stability study.
For this reason, we note that the DP equation is in fact bi-Hamiltonian, admitting a completely different Hamiltonian structure from \eqref{e:ham1}
in terms of the momentum density.  Indeed, it  can be readily checked that \eqref{DP} can be rewritten in terms of the momentum density variable $m=u-u_{xx}$ as
\[
m_t+um_x+3mu_x=0,
\]
which can be recognized as the Hamiltonian system
\[
m_t=J_m\frac{\delta E}{\delta m}
\]
where  $E$ is the total (conserved) mass defined in \eqref{e:cons2a} above and
\[
J_m=-\frac{1}{2}\left(3m\partial_x+m_x\right)\left(1-\partial_x^2\right)^{-1}{{\partial_x^{-1}}}\left(3\partial_xm-m_x\right)
\]
is a state-dependent skew-adjoint operator on $L^2(\RM)$.  In this formulation, it can be readily seen that $F_1$ and $F_2$ are 
formal {{Casmirs}} for the associated Hamiltonian flow: see the Appendix in \cite{LP22smooth}.  Clearly, control
of $m$ in even $L^2(\RM)$ gives control of the solution $u$ in $H^2(\RM)$.  
This momentum density based Hamiltonian
formulation has been utilized in several stability works including \cite{Ehrman24,LP22smooth} concerning both periodic and solitary waves.
\end{remark}

\subsection{The Linearization}

In this section, we introduce the appropriate linearized operators.  To this end, note that moving to a co-moving
frame $\xi=x-ct$ and linearizing \eqref{DP} about a given (now stationary) solitary wave solution $u_0(\cdot;k,c)$ leads to the linear evolution equation
\begin{equation}  \label{DPL} 
v_t - v_{t\xi\xi} =  c(v-v'')'+3u_0' v_{\xi\xi}+3u_0''v_\xi - 4u_0v_\xi - 4u_0' v + u_0v_{\xi\xi\xi}+ u_0'''v,
\end{equation}
where $v(\cdot,t)$ belongs to an appropriate class of perturbations for each time $t\geq 0$, and where $'$ denotes differentiating with respect 
to the traveling variable $\xi$.
As \eqref{DPL} is autonomous in time, we seek separated solutions of the form 
\[
v(\xi,t)=v(\xi)e^{\lambda t},~~v(\cdot)\in L^2(\RM),~~\lambda\in\CM
\]
leading to the spectral problem
\begin{equation} \label{DPLeig} 
\lambda(v - v'') = c(v-v'')'+3u_0' v''+3u_0'' v' - 4u_0 v' - 4u_0' v + u_0 v'''+ u_0''' v
\end{equation}
considered again on an appropriate class of perturbations.  A useful observation is that the spectral problem \eqref{DPLeig} can be 
rewritten as
\[
\partial_\xi(4-\partial_\xi^2)\left(u_0-c+3c(4-\partial_\xi^2)^{-1}\right)v=\lambda (1-\partial_\xi^2)v
\]
and hence it is equivalent to study the  spectral problem
\begin{equation}\label{e:Adef}
\mathcal{A}[u_0]v=\lambda v,~~~\mathcal{A}[u_0]:=J\mathcal{L}[u_0]
\end{equation}
where here 
\begin{equation}\label{e:JLdef}
J=\partial_\xi(1-\partial_\xi^2)^{-1}(4-\partial_\xi^2),~~\mathcal{L}[u_0]:={{u_0-c+3c(4-\partial_\xi^2)^{-1}}}.
\end{equation}

In this section, our goal is to study the spectrum of $\mathcal{A}[u_0]$ on both $L^2(\RM)$ as well as in the exponentially weighted
space $L^2_{\rm \alpha}(\RM)$.  An important initial observation is that the operators $J$ and $\mathcal{L}[u_0]$ are skew symmetric and symmetric on $L^2(\RM)$, respectively.
As such, on $L^2(\RM)$ the operator $\mathcal{A}[u_0]$ is in Hamiltonian form.  As  a consequence, one can directly verify 
that if $\lambda\in\CM$ is in the $L^2(\RM)$-spectrum of $\mathcal{A}[u_0]$, then so are $-\lambda$ and $\pm\bar{\lambda}$.  That is, the $L^2(\RM)$-spectrum
of $\mathcal{A}[u_0]$ is necessarily symmetric about both the real and imaginary axis.  It follows that $u_0$ is spectrally
stable to perturbations in $L^2(\RM)$ if and only if the $L^2(\RM)$-spectrum is confined to the imaginary axis, i.e. the solitary waves $u_0(\cdot;k,c)$
are at best ``marginally" spectrally stable in $L^2(\RM)$.  With this in mind, the authors in \cite{Li2019} used the Hamiltonian structure \eqref{e:ham1}
along with the monotonicity condition in Lemma \ref{L:non-deg} to establish the following spectral stability result.

\begin{theorem}[\cite{Li2019}]\label{T:spec_stab}
The smooth solitary wave solutions $u_0(\cdot;k,c)$ are spectrally stable to perturbations in $L^2(\RM)$.  In particular,
\[
\sigma_{L^2(\RM)}\left(\mathcal{A}[u_0]\right)= i\RM.
\]
\end{theorem}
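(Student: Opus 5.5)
The equality $\sigma_{L^2(\RM)}(\mathcal{A}[u_0])=i\RM$ follows from two inclusions: the essential spectrum fills $i\RM$, and no spectrum lies off the imaginary axis.

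\emph{Essential spectrum fills $i\RM$.} Since $u_0-k$ decays exponentially (Remark \ref{R:decay}), $\mathcal{A}[u_0]$ is a relatively compact perturbation of the constant-coefficient operator $\mathcal{A}[k]=J(k-c+3c(4-\partial_\xi^2)^{-1})$. By Weyl's theorem they share essential spectrum. The constant-coefficient operator is handled by Fourier transform $\partial_\xi\mapsto i\ell$, which gives the symbol
\[
\lambda(\ell)=i\ell\,\frac{4+\ell^2}{1+\ell^2}\left(k-c+\frac{3c}{4+\ell^2}\right)=i\,\frac{\ell\left((k-c)(4+\ell^2)+3c\right)}{1+\ell^2}.
\]
This is purely imaginary and continuous in $\ell$, and it behaves like $i(k-c)\ell$ as $|\ell|\to\infty$. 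Since $k<c$, the imaginary part sweeps all of $\RM$ by the intermediate value theorem. Hence $\sigma_{\rm ess}=i\RM$. One must check relative compactness because $\mathcal{A}$ is nonlocal. Writing $\mathcal{A}[u_0]-\mathcal{A}[k]=J(u_0-k)$, I would show that $J(u_0-k)(\mathcal{A}[k]-\lambda)^{-1}$ is compact. This holds because $J(u_0-k)$ is a first-order operator with decaying coefficient, while $\mathcal{A}[k]-\lambda$ is elliptic of order one: its symbol grows like $|\ell|$ since $c-k\neq 0$.

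\emph{No spectrum off the imaginary axis.} By the Hamiltonian symmetry noted before the theorem, it suffices to exclude eigenvalues with $\realpart{\lambda}>0$. Off $i\RM$ the operator $\mathcal{A}[u_0]-\lambda$ is Fredholm of index zero, so any spectrum there is point spectrum. To exclude it, I would use the standard Grillakis--Shatah--Strauss / Pego--Weinstein counting argument for $J\mathcal{L}$. The argument has three ingredients.

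\textbf{Spectral structure of $\mathcal{L}[u_0]$.} I would show that $\mathcal{L}[u_0]$ has exactly one negative eigenvalue and kernel spanned by $u_0'$, with the rest of its spectrum bounded away from zero. The essential spectrum of $\mathcal{L}[k]$ is $\{k-c+3c/(4+\ell^2)\}$, which is negative near infinity since $k-c<0$. So $\mathcal{L}$ is not positive at high frequencies, and the standard theory does not apply directly. To fix this, I would rewrite the problem using $w=(4-\partial_\xi^2)^{-1/2}$-type transformations, or equivalently pass to the operator $(c-u_0)^{-1}$-conjugated form, which is an operator of Sturm--Liouville type
\[
-\partial_\xi(\cdot)\partial_\xi + \text{potential}.
\]
For that operator Sturm oscillation theory applies. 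Since $u_0'$ has exactly one zero, the operator has exactly one negative eigenvalue and kernel spanned by $u_0'$.

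\textbf{Counting negative directions.} Differentiating the profile equation in $c$ gives $\mathcal{L}\partial_cu_0=-\frac{\delta Q}{\delta u}$-type identities. From these, the count of unstable eigenvalues equals $n(\mathcal{L})-n(d'')$, where $d''$ is related to $\partial_cQ(u_0)$. Lemma \ref{L:non-deg} gives $\partial_cQ>0$, so $n(d'')=1$ and the count is zero. Hence there are no eigenvalues with nonzero real part.

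\emph{Main obstacle.} I expect the hard step to be the indefinite high-frequency sign of $\mathcal{L}$. The abstract index theorems assume $\mathcal{L}$ is bounded below with finitely many negative eigenvalues, which fails here. I would first try to reformulate the eigenvalue problem as $\tilde J\tilde{\mathcal{L}}$ with $\tilde{\mathcal{L}}$ bounded below, for instance by multiplying the spectral problem by $(4-\partial_\xi^2)$ and working with the variable $(c-u_0)v$. Failing that, I would directly adapt the energy argument: $\langle\mathcal{L}v,v\rangle=0$ for eigenfunctions with $\realpart{\lambda}\neq0$, combined with the constraint $\langle v,\delta Q/\delta u\rangle=0$.
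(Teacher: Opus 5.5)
Your two-part plan has the same architecture the paper relies on: the Weyl/Fourier computation of Section~\ref{s41} for the essential spectrum, and the Hamiltonian index argument of \cite{Li2019}, driven by $\partial_cQ>0$, for the point spectrum. However, two steps fail as written.

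\emph{The relative-compactness step is false.} The operator $J(u_0-k)(\mathcal{A}[k]-\lambda)^{-1}$ is not compact. The perturbation $J(u_0-k)$ has principal part $(u_0-k)\partial_\xi$. This is the same order as the principal part $(k-c)\partial_\xi$ of $\mathcal{A}[k]$; indeed the principal symbol of $\mathcal{A}[u_0]$ is $i(u_0(\xi)-c)\ell$, which reflects the quasilinear term $uu_{xxx}$. Concretely, take $g_n=\chi e^{in\xi}$ with $\chi\in C_c^\infty(\RM)$ supported where $u_0\neq k$. Then $g_n\rightharpoonup 0$, while $J(u_0-k)(\mathcal{A}[k]-\lambda)^{-1}g_n=(k-c)^{-1}(u_0-k)\chi e^{in\xi}+o(1)$ in $L^2(\RM)$, and this norm does not tend to zero. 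Relative compactness requires a perturbation of strictly lower order, so Weyl's theorem in that form is unavailable.

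The conclusion still holds via the ODE/dichotomy characterization (cf.\ \cite[Theorem 3.1.11]{kapitula2013}, which the paper invokes). Multiplying by $1-\partial_\xi^2$ gives the system \eqref{linear}, whose leading coefficient $c-u_0$ is bounded away from zero because $u_0<c$. Then $\lambda-\mathcal{A}[u_0]$ is Fredholm exactly when $A^\infty(\lambda)$ is hyperbolic, i.e.\ when $\lambda\notin i\RM$. The index is zero because the end states at $\pm\infty$ coincide. Weyl sequences supported far out, where $u_0\approx k$, give $i\RM\subset\sigma$.

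\emph{The ``main obstacle'' is a sign artifact, and it makes your claim about $\mathcal{L}[u_0]$ false.} The symbol $k-c+3c/(4+\ell^2)$ lies in $[k-c,\,k-c/4]$, so it is negative for every $\ell$, since $k<c/4$. In fact $\mathcal{L}[u_0]=-(\mathcal{H}''(u_0)+cQ''(u_0))$. Hence $\mathcal{L}[u_0]$ has an interval of negative essential spectrum and cannot have ``exactly one negative eigenvalue.''

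Write instead $\mathcal{A}[u_0]=(-J)(-\mathcal{L}[u_0])$. The operator $-\mathcal{L}[u_0]=c-u_0-3c(4-\partial_\xi^2)^{-1}$ is bounded and self-adjoint, with essential spectrum bounded below by a positive constant, since both $c-u_0$ and $c-k-3c/(4+\ell^2)\geq c/4-k$ are. This is exactly the setting of GSS-type index theorems (in a version allowing a KdV-type $J$), so no reformulation is needed.

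Your Sturm--Liouville idea then proves the correct statement. Via the weighted problem $(4-\partial_\xi^2)g=\kappa(c-u_0)^{-1}g$ and min--max, $n(-\mathcal{L}[u_0])$ equals the number of negative eigenvalues of $-\partial_\xi^2+4-3c/(c-u_0)$. That operator's zero mode $(4-\partial_\xi^2)^{-1}u_0'=(c-u_0)u_0'/(3c)$ has exactly one zero. So $n(-\mathcal{L}[u_0])=1$, with kernel spanned by $u_0'$.

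Finally, $-\mathcal{L}[u_0]\partial_cu_0=-\frac{\delta Q}{\delta u}(u_0)$, so $\langle-\mathcal{L}[u_0]\partial_cu_0,\partial_cu_0\rangle=-\partial_cQ<0$ by Lemma~\ref{L:non-deg}. The count $n(-\mathcal{L}[u_0])-p(d'')=1-1=0$, with $d''=\partial_cQ>0$, then excludes eigenvalues with $\realpart{\lambda}\neq0$.
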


\begin{remark}
For completeness, we note that the spectral problem \eqref{e:Adef} is directly related to the Hamiltonian formulation \eqref{e:ham1}.  Indeed, 
recalling the relation \eqref{e:translation} we see that $u_0(\cdot;k,c)$ is a stationary solution of the nonlinear evolution equation
\[
u_t=J\left(\frac{\partial\mathcal{H}}{\partial u}(u)+c\frac{\partial Q}{\partial u}(u)\right).
\]
Linearizing the above about $u_0$ leads to the spectral problem
\[
\lambda v = J\left(\frac{\partial^2\mathcal{H}}{\partial^2 u}(u_0)+c\frac{\partial^2 Q}{\partial^2 u}(u_0)\right)v = J\mathcal{L}[u_0]v,
\]
where the last equality can be verified by a direct calculation.
The spectral properties of $\mathcal{L}[u_0]$ were studied in detail in \cite{Li2019}, forming a crucial component of the later
orbital stability study in \cite{Li2024}.
\end{remark}

\section{Analysis of the Essential Spectrum}\label{S:ess_spec}

In this section we begin our linear stability analysis for a given smooth solitary wave $u_0=u_0(\cdot;k,c)$.    As a first step, we study 
the spectral stability of $u_0$ in both $L^2(\RM)$ and in the exponentially weighted space 
\[
L^2_\alpha(\RM):=\left\{v\in L^2(\RM):v(\xi)e^{\alpha\xi}\in L^2(\RM;d\xi)\right\}
\]
equipped with the natural norm
\[
\|v\|_{L^2_\alpha(\RM)}:=\left\|ve^{\alpha\xi}\right\|_{L^2(\RM;d\xi)}
\]
and considered for appropriate values of $\alpha>0$, to be discussed below.
After introducing the appropriate linearized operator, 
we study its essential and point spectrum in both $L^2(\RM)$ and $L^2_{\alpha}(\RM)$.  We then establish one of our main technical results: that $\lambda=0$
is the only purely imaginary eigenvalue of the linearized operator on $L^2(\RM)$.  Note that while previous works have established
the spectral and orbital stability of the solitary waves $u_0$ (see \cite{Li2019,Li2024,LP22smooth}), these results do not necessarily preclude
the existence of multiple purely imaginary eigenvalues.  We then continue the spectral analysis in $L^2_{\alpha}(\RM)$ by studying the algebraic and geometric
multiplicity of the zero eigenvalue in the weighted space, and by establishing the existence of a spectral gap between the imaginary axis and the non-zero spectrum
in the weighted space.

\subsection{The Essential Spectrum on $L^2(\RM)$}
\label{s41}

We begin by studying the essential spectrum of $\mathcal{A}[u_0]$ acting on $L^2(\RM)$.  	While the essential spectrum
is known to be confined to the imaginary axis thanks to Theorem \ref{T:spec_stab}, we carry out the analysis here to
introduce tools that will be helpful later on.

Using that $u_0(\xi;k,c)\to k$ exponentially fast as $\xi\to\pm\infty$ it follows
from the Weyl Essential Spectrum Thorem\footnote{See, for example, Theorem 2.2.6  and Theorem 3.1.11 in \cite{kapitula2013}.} 
that the $L^2(\RM)$-essential spectrum of $\mathcal{A}[u_0]$  is given precisely by the spectrum associated to the asymptotic spectral problem
\begin{equation} \label{DPLeigess} 
\lambda(v - v'') = \frac{d}{d\xi}\left(c(v-v'') +k(v''-4v)\right),
\end{equation}
where the {{wave $u_0$}} has been replaced by its (constant) asymptotic value $k$ and all derivatives of $u_0$ have been replaced with $0$.  
As \eqref{DPLeigess} is a constant coefficient spectral problem, its spectrum on $L^2(\RM)$ is known to be entirely essential (i.e. there is no point spectrum)
and can be determined by Fourier analysis: for more information, see \cite[Chapter 3]{kapitula2013}.  Indeed, rewriting \eqref{DPLeigess}
as
\[
\lambda(v - v'') =\left((c-k)(v-v'')\right)'+3kv',
\]
it follows by Fourier analysis that the $L^2(\RM)$-spectrum of \eqref{DPLeigess} is given precisely by the zero set of the polynomial function
\begin{equation}\label{Pdef}
\mathcal{P}(\lambda,r):= (\lambda+r(k-c))(1-r^2)-3kr,~~\lambda,r\in\CM
\end{equation}
when the parameter $r$ is restricted to be purely imaginary, i.e. $\lambda\in\CM$ is in the $L^2(\RM)$-spectrum for \eqref{DPLeigess} if and only
if $P(\lambda,i\sigma)=0$ for some $\sigma\in\RM$.
This leads to the (linear) dispersion relation
\begin{equation}\label{firstdisp}
{\mathcal{P}}(\lambda,i\sigma)=0\implies
\lambda=\ri \sigma\left(c-\frac{k(4+\sigma^2)}{1+\sigma^2}\right),\;\;\sigma\in\mathbb{R},
\end{equation}
implying that the $L^2(\RM)$-spectrum for \eqref{DPLeigess}, and hence the essential spectrum of $\mathcal{A}[u_0]$ acting on $L^2(\RM)$,
of the entire imaginary axis.  

\

Before continuing, in addition to determining the location of the essential spectrum, we collect some facts about the  
roots of the polynomial ${\mathcal{P}}$ when $|\lambda|$ is large.  These results  will be useful later when studying the spectrum on the weighted spaces, as well as
in our linear stability study.  
Note that when $|\lambda|$ is very large{{, one}} can naturally expect
that the term $3kr$ in \eqref{Pdef} is asymptotically irrelevant, in the sense that for such large $\lambda$ that the roots of
\eqref{Pdef} should be well-approximated by the roots of the reduced polynomial
\[
\widetilde{\mathcal{P}}(\lambda,r)=(\lambda+r(k-c))(1-r^2),
\]
which are easily computable.  To make this rigorous and to provide appropriate error estimates, we use the the following technical lemma.

\begin{lemma}\cite[Lemma 1.20]{Pego}\label{PegL}
Assume that analytic functions $\widetilde{\mathcal{P}}(r)$ and ${\mathcal{J}}(r)$, depending on a parameter $\lambda$, are given, and that 
${\mathcal{P}}(r) = \widetilde{\mathcal{P}}(r) + {\mathcal{J}}(r)$. Assume that as $|\lambda| \to \infty$, there is a (simple) zero $\tilde{r} = \tilde{r}(\lambda)$ of $\widetilde{\mathcal{P}}$, a positive function $\rho(\lambda) \to 0$ as $|\lambda| \to \infty$ and a constant $\rho_0 > 1$ such that for $|r - \tilde{r}| < \rho$,
\[
\widetilde{\mathcal{P}}'(r) = \widetilde{\mathcal{P}}'(\tilde{r})(1 + o(1)) \quad \text{and} \quad {\mathcal{J}}(r) = {\mathcal{J}}(\tilde{r})(1 + o(1))
\]
as $|\lambda| \to \infty$, and $\rho \geq \rho_0 \left|{\mathcal{J}}(\tilde{r}) /\widetilde{\mathcal{P}}'(\tilde{r})\right|$. Then for $|\lambda|$ sufficiently large, ${\mathcal{P}}$ has exactly one root $r_0 = r_0(\lambda)$ satisfying $|r_0 - \tilde{r}| \leq \rho$.
\end{lemma}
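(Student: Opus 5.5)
This is a Rouché-type perturbation statement, and I would prove it with Rouché's theorem on the disk $D=\{r:|r-\tilde r|\le\rho\}$ for $|\lambda|$ large. The idea is to compare $\mathcal{P}=\widetilde{\mathcal{P}}+\mathcal{J}$ with $\widetilde{\mathcal{P}}$ on $\partial D$. If $|\mathcal{J}(r)|<|\widetilde{\mathcal{P}}(r)|$ there, then $\mathcal{P}$ and $\widetilde{\mathcal{P}}$ have the same number of zeros in $D$, counted with multiplicity. So the proof reduces to two facts: (i) $\widetilde{\mathcal{P}}$ has exactly one zero (namely $\tilde r$) in $D$; (ii) the strict inequality holds on $\partial D$.

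\textbf{Lower bound for $\widetilde{\mathcal{P}}$.} Since $\widetilde{\mathcal{P}}(\tilde r)=0$, for $r\in D$ I write
\[
\widetilde{\mathcal{P}}(r)=\int_0^1\widetilde{\mathcal{P}}'(\tilde r+s(r-\tilde r))\,ds\,(r-\tilde r).
\]
The segment from $\tilde r$ to $r$ lies in $D$ by convexity. The hypothesis $\widetilde{\mathcal{P}}'(r)=\widetilde{\mathcal{P}}'(\tilde r)(1+o(1))$ holds uniformly on $D$ as $|\lambda|\to\infty$; I interpret the $o(1)$ as a sup over the disk, which is how the lemma is meant. Hence
\[
\widetilde{\mathcal{P}}(r)=\widetilde{\mathcal{P}}'(\tilde r)(r-\tilde r)(1+o(1)).
\]
Two consequences follow for $|\lambda|$ large. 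First, $\widetilde{\mathcal{P}}$ vanishes in $D$ only at $r=\tilde r$. Second, that zero is simple, because $\widetilde{\mathcal{P}}'(\tilde r)\ne0$: this is implicit in the assumption that $\tilde r$ is simple, and it is needed anyway for the quotient in the hypothesis to make sense. This gives (i). On $\partial D$ we get
\[
|\widetilde{\mathcal{P}}(r)|=|\widetilde{\mathcal{P}}'(\tilde r)|\,\rho\,(1+o(1)).
\]

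\textbf{Upper bound for $\mathcal{J}$ and comparison.} On $\partial D$ the hypothesis gives $|\mathcal{J}(r)|=|\mathcal{J}(\tilde r)|(1+o(1))$. Using $\rho\ge\rho_0|\mathcal{J}(\tilde r)/\widetilde{\mathcal{P}}'(\tilde r)|$,
\[
|\mathcal{J}(r)|\le \frac{\rho\,|\widetilde{\mathcal{P}}'(\tilde r)|}{\rho_0}(1+o(1))<|\widetilde{\mathcal{P}}'(\tilde r)|\,\rho\,(1+o(1))=|\widetilde{\mathcal{P}}(r)|.
\]
The strict inequality holds once $|\lambda|$ is large enough that $(1+o(1))/\rho_0<1-o(1)$, which is possible because $\rho_0>1$ is fixed. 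The case $\mathcal{J}(\tilde r)=0$ is not a problem: the hypothesis then forces $\mathcal{J}\equiv0$ on $D$, and the conclusion is trivial. Rouché's theorem then gives exactly one zero $r_0$ of $\mathcal{P}$ in the open disk. Since the strict inequality also rules out zeros on $\partial D$, $r_0$ is the unique zero with $|r_0-\tilde r|\le\rho$.

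\textbf{Main obstacle.} The only delicate point is the bookkeeping of uniformity: the $o(1)$ terms must be uniform over the disk $D$. The analyticity of $\widetilde{\mathcal{P}}$ and $\mathcal{J}$ in $r$ near $\tilde r$ is what makes Rouché's theorem applicable; the hypothesis $\rho\to0$ only serves to localize, and the argument itself does not need it.
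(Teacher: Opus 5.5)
Your Rouché argument on the disk $|r-\tilde r|\le\rho$ is correct and is the standard proof of this lemma; the paper gives no proof of its own and simply cites Pego--Weinstein. One detail is worth stating explicitly: the uniform $o(1)$ hypotheses are assumed on the open disk $|r-\tilde r|<\rho$, and they extend to the circle $|r-\tilde r|=\rho$ by continuity of the analytic functions, which is what justifies your comparison of $|\mathcal{J}|$ and $|\widetilde{\mathcal{P}}|$ there.
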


Equipped with Lemma \ref{PegL}, we establish the main result for this section.

\begin{lemma}\label{L0}
The essential spectrum of the operator $\mathcal{A}[u_0]$ acting on $L^2(\mathbb{R})$ 
consists of the entire imaginary axis. 
Furthermore, for each fixed $\lambda\in\CM$ with positive real part, the characteristic polynomial $P(\lambda,r)$ 
has two roots  with positive real part and one root with negative real part, which we denote by $r_1(\lambda)$. 
\end{lemma}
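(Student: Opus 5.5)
The first assertion, that the $L^2(\RM)$-essential spectrum is the whole imaginary axis, is essentially the Weyl computation already done above. By the Weyl Essential Spectrum Theorem the essential spectrum of $\mathcal{A}[u_0]$ coincides with the spectrum of the constant coefficient problem \eqref{DPLeigess}. This spectrum is the set of $\lambda$ with $\mathcal{P}(\lambda,i\sigma)=0$ for some $\sigma\in\RM$, i.e. the image of the dispersion relation \eqref{firstdisp}. To see that this image is all of $i\RM$, write $\lambda=i\omega(\sigma)$ with
\[
\omega(\sigma)=\sigma\left(c-\frac{k(4+\sigma^2)}{1+\sigma^2}\right).
\]
This $\omega$ is continuous and odd. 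Since $\omega(\sigma)\sim(c-k)\sigma\to\pm\infty$ as $\sigma\to\pm\infty$ (recall $c>4k>k$), the intermediate value theorem shows $\omega$ is onto $\RM$. In fact $\omega'(\sigma)=-c_g(\sigma)>0$, by the group velocity computation in the introduction, so $\omega$ is a bijection.

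For the root count, I would argue by continuity in $\lambda$ on the connected set $\{\realpart{\lambda}>0\}$. For each fixed $\lambda$, $\mathcal{P}(\lambda,\cdot)$ is a cubic in $r$ with leading coefficient $-(k-c)=c-k\neq 0$ in front of $r^3$. Its three roots therefore depend continuously on $\lambda$.

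No root can cross the imaginary axis while $\realpart{\lambda}>0$. If $\mathcal{P}(\lambda,i\sigma)=0$, then $\lambda=i\omega(\sigma)\in i\RM$, which is impossible. Hence the number of roots with positive real part is constant on the right half-plane, and it suffices to count at a single convenient point.

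I would take $\lambda$ real and large, where the reduced polynomial $\widetilde{\mathcal{P}}(\lambda,r)=(\lambda+r(k-c))(1-r^2)$ has the simple roots
\[
\tilde r=\frac{\lambda}{c-k}\quad\text{and}\quad \tilde r=\pm1.
\]
I would apply Lemma \ref{PegL} with $\mathcal{J}(r)=-3kr$ to each root.

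Near $r=\pm1$ we have $\widetilde{\mathcal{P}}'(\pm1)=\mp2(\lambda\mp(c-k))\sim\mp2\lambda$, while $\mathcal{J}(\pm1)=\mp3k$. So a radius $\rho\sim C/|\lambda|$ works, and the hypotheses hold on the disc. This gives roots $r=\pm1+O(1/\lambda)$.

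Near $\tilde r=\lambda/(c-k)$ we have $\widetilde{\mathcal{P}}'(\tilde r)=(k-c)(1-\tilde r^2)\sim\lambda^2/(c-k)$ and $\mathcal{J}(\tilde r)=O(\lambda)$. Here one takes $\rho$ of order $1/|\lambda|$ (in particular $\rho\to0$), and the ratio conditions hold. This root has large positive real part.

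We thus get two roots with positive real part and one, near $-1$, with negative real part; call the latter $r_1(\lambda)$. For real large $\lambda$ one could avoid Lemma \ref{PegL} altogether, since $\mathcal{P}(\lambda,r)$ is real there and sign checks suffice:
\[
\mathcal{P}(\lambda,0)=\lambda>0,\qquad \mathcal{P}(\lambda,r)\to-\infty \text{ as } r\to-\infty,
\]
which gives a negative root. Combined with the product of the roots, $-\lambda/(c-k)<0$, and similar sign checks, this yields the positive ones.

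The main obstacle is the continuity argument: one must make sure that no root escapes to infinity and that the roots remain distinct enough to be labeled. The first is guaranteed because the leading coefficient $c-k$ never vanishes. The second is handled by counting only the number of roots in each half-plane, which is locally constant by the argument principle along a contour hugging $i\RM$. That argument needs only that no root lies on $i\RM$, which was shown above.
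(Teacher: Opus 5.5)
Your proposal is correct and follows essentially the same route as the paper: the Weyl/dispersion-relation computation for the essential spectrum, Lemma \ref{PegL} applied around the reduced roots $-1$, $1$, $\lambda/(c-k)$, and the observation that a root of $\mathcal{P}(\lambda,\cdot)$ can lie on $i\RM$ only if $\lambda\in i\RM$. Your version makes the continuity step (counting at a single real $\lambda$, then the argument principle) and the surjectivity of the dispersion relation more explicit than the paper does, and your use of $\mathcal{J}(r)=-3kr$ is harmless: it follows a sign typo in \eqref{Pdef}, since the polynomial consistent with \eqref{DPLeigess}, \eqref{firstdisp} and $(c-k)\det(rI-A^\infty(\lambda))$ is $(\lambda+r(k-c))(1-r^2)+3kr$, but none of your steps depend on that sign.
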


\begin{proof}
We already have proven the part about the essential spectrum.  To characterize the roots of $P(\lambda,r)$ with ${{\realpart{\lambda}}}>0$, we
apply Lemma \ref{PegL} to the polynomial \eqref{Pdef} in a very similar fashion as done in \cite[Section 2(c)]{Pego}. To this end, we define
\[
 \widetilde{\mathcal{P}}(r):= (\lambda+r(k-c))(1-r^2)~~\text{ and }~~{\mathcal{J}}(r):= 3kr{{,}}
\]
and note that the three roots of $\widetilde{\mathcal{P}}(r)$ are given explicitly by
\begin{equation}\label{TR}
\tilde{r}_1=-1,\;\;\tilde{r}_{2}=1,\text{ and }\tilde{r}_{3}=\lambda/(c-k).
\end{equation}
Further, noting that $\widetilde{\mathcal{P}}'(r)=\lambda(c - k)(3r^2 - 1)$ we have that
\[
\widetilde{\mathcal{P}}'(\tilde{r}_1)=2 (c - k + \lambda),\;\;\widetilde{\mathcal{P}}'(\tilde{r}_{2})=2 (c - k - \lambda)\lambda,\;\;
\widetilde{\mathcal{P}}'(\tilde{r}_{3})=\frac{\left(\lambda^2-(c -k)^2 \right)}{c -k}.
\]
From the expressions above, we directly compute that
\eq{
\widetilde{\mathcal{P}}'(r)&=\widetilde{\mathcal{P}}'(\tilde{r}_1)\left(1+\frac{\left(r +1\right) \left(3 (r-1)(c-k)  -2 \lambda \right)}{2 (c - k + \lambda)}\right),\;\;{\mathcal{J}}(r)={\mathcal{L}}(\tilde{r}_1)\left(1+3k(r+1)\right)\\
\widetilde{\mathcal{P}}'(r)&=\widetilde{\mathcal{P}}'(\tilde{r}_2)\left(1+\frac{\left(r -1\right) \left(3 (r+1)(c-k)  -2 \lambda \right)}{2 (c - k - \lambda)}\right),\;\;{\mathcal{J}}(r)={\mathcal{L}}(\tilde{r}_2)\left(1+3k(r-1)\right)\\
\widetilde{\mathcal{P}}'(r)&=\widetilde{\mathcal{P}}'(\tilde{r}_3)\left(1+\frac{\left(r(c -k) -\lambda \right) \left(3 r (c - k) +\lambda \right)}{\lambda^2-(c -k)^2}\right),\;\;{\mathcal{J}}(r)={\mathcal{L}}(\tilde{r}_3)\left(1+3k(r-\lambda/(c-k))\right).
}{cond151}
For each root $\tilde{r}_i$ we choose
\begin{equation}\label{ro}
\rho_i(\lambda):=  \left|{\mathcal{J}}(\tilde{r}_i) /\widetilde{\mathcal{P}}'(\tilde{r}_i)\right|=O\left(|\lambda|^{-1}\right),\;\;i=1,2,3
\end{equation}
for any $\rho_0>1$.
From \eqref{TR}, \eqref{cond151}, and \eqref{ro}, the assumptions of Lemma \ref{PegL} are clearly verified for the polynomial ${\mathcal{P}}(\lambda,r) $ 
and hence the three roots of ${\mathcal{P}}(r,\lambda)$ satisfy 
\eq{
r_1(\lambda)&=-1+O\left(|\lambda|^{-1}\right),\\
r_2(\lambda)&=\lambda/(c-k)+O\left(|\lambda|^{-1}\right),\\
r_3(\lambda)&=1+O\left(|\lambda|^{-1}\right),
}{3roots}
for $\realpart{\lambda}>0$ and $|\lambda|$ sufficiently large.

From \eqref{3roots}, for large enough $|\lambda|$ with positive real part, we have two roots with positive real parts and one with a negative real part.  
 Since the only location in the complex plane where the real part of the solutions of \eqref{Pdef} can change sign is on the imaginary axis\footnote{This follows from
 the characterization of the essential spectrum.  See, for example, Lemma 3.1.10 or Theorem 3.1.11 in \cite{kapitula2013}.}, 
 we have the assertion of the lemma concerning the sign of the real parts of the roots $r=r(\lambda)$ of $P(\lambda,r)=0$ when $\left(\realpart{\lambda}>0\right)$.
\end{proof}

Before continuing, we note for $\lambda$ in the $L^2(\RM)$-essential spectrum of $\mathcal{A}[u_0]$ that the roots polynomial equation
$\mathcal{P}(\lambda,r)=0$ satisfy
\[
\realpart{r_1(\lambda)}<0=\realpart{r_2(\lambda)}<\realpart{r_3(\lambda)}
\]
while, for $\lambda$ to the right of the essential spectrum we have
\[
\realpart{r_1(\lambda)}<0<\realpart{r_j(\lambda)},~~j=2,3.
\]
In particular, we observe that for each $\realpart{\lambda}\geq 0$ the polynomial $\mathcal{P}(\lambda,r)$ has a \emph{unique}
root with negative real part.  This observation will be important in Section \ref{S:pt_spec} below.

\subsection{The Essential Spectrum on $L^2_{\alpha}(\RM)$}
\label{s42}

We now consider the essential spectrum for the operator $\mathcal{A}[u_0]$ when
acting on the exponentially weighted space $L^2_\alpha(\RM)$.  In particular, we show that this essential spectrum is, for appropriate {{choice}} of $\alpha$,
contained in the open left-half plane $\realpart{\lambda}<0$.

As a first step, we make a change of variables to better understand the operator $\mathcal{A}[u_0]$ when acting on $L^2_{\alpha}(\RM)$.
To this end, note by definition that for a given $v\in L^2_\alpha(\RM)$ we can set $w(\xi)=v(\xi)e^{-\alpha \xi}\in L^2(\RM)$.  Making this substitution
in the spectral problem \eqref{DPLeig} leads to the spectral problem
\begin{equation}\label{spec_w}
\lambda w=e^{\alpha\xi}\mathcal{A}[u_0]e^{-\alpha\xi}w=:\mathcal{A}_\alpha [u_0]w
\end{equation}
considered on $L^2(\RM)$.  Note that, in effect, the operator $\mathcal{A}_\alpha[u_0]$ is obtained from $\mathcal{A}[u_0]$ by making
the transformation 
\[
d/d\xi\to d/d\xi-\alpha.
\]
As a result of the above transformation, we see that the spectrum of $\mathcal{A}[u_0]$ on the weighted
space $L^2_\alpha$ is equivalent to the spectrum of the conjugated operator $\mathcal{A}_{\alpha}[u_0]$
acting on $L^2(\RM)$, i.e. we have
\[
\sigma_{L^2_{\alpha}(\RM)}\left(\mathcal{A}[u_0]\right) = \sigma_{L^2(\RM)}\left(\mathcal{A}_\alpha[u_0]\right).
\]
This equivalence will be used throughout our work.

\begin{remark}\label{R:wspec_nosymmetry}
It is straightforward to see that the Hamiltonian structure of \eqref{DP} implies that the conjugated linearized operator $\mathcal{A}_\alpha[u_0]$
can be decomposed as $\mathcal{A}_\alpha[u_0]=J_\alpha\mathcal{L}_\alpha[u_0]$, where\footnote{Compare to \eqref{e:Adef} and \eqref{e:JLdef} in the unweighted case.
Also, even though we restrict to $\alpha>0$ throughout our stability analysis, we take these definitions of $J_\alpha$ and $\mathcal{L}_\alpha[u_0]$ for all $\alpha\in\RM$.}
\begin{align*}
\mathcal{L}_\alpha[u_0]&=(4-(\partial_\xi-\alpha)^2)^{-1}\left((4-(\partial_\xi-\alpha)^2)(u_0-c)+3c\right),\\
J_\alpha&=(\partial_\xi-\alpha)\left(1-(\partial_\xi-\alpha)^2\right)^{-1}\left(4-(\partial_\xi-\alpha)^2\right).
\end{align*}
One can readily check that the adjoint operator satisfies $\mathcal{A}^\dag_\alpha[u_0]=-\mathcal{L}_{-\alpha}[u_0]J_{-\alpha}$ so that, in particular, for $\alpha>0$ the operator
$\mathcal{A}_\alpha[u_0]$ does not have a Hamiltonian structure.  Consequently, the spectrum of $\mathcal{A}_\alpha[u_0]$ is no longer symmetric
with respect to reflections about the imaginary axis (although it is still symmetric about the real axis).
\end{remark}

To proceed, note that the spectral problem \eqref{spec_w} can  be written explicitly as
 \begin{equation}\label{DPLeigessW}
 \lambda((1-\alpha^2)w +2\alpha w'- w'') = \left(\frac{d}{d\xi}-\alpha\right)\left\{c((1-\alpha^2)w +2\alpha w'- w'') +k((4-\alpha^2)w -2\alpha w'- w'')\right\}.  
 \end{equation}
Following the procedure from the previous section, we find that the essential spectrum of the operator $\mathcal{A}_\alpha[u_0]$
on $L^2(\RM)$ is determined by the roots curve in the complex plane defined by the roots $\lambda$ of the polynomial equation
\[
{\mathcal{P}}(\lambda,i\sigma-\alpha)=0
\]
for $\sigma\in\RM$.  By replacing $i\sigma$ with $i\sigma-\alpha$ in the (unweighted) dispersion relation \eqref{firstdisp} it follows
that the essential spectrum of $\mathcal{A}_a[u_0]$ is given by
\begin{equation}\label{lmroots}
\begin{aligned}
\lambda&=(\ri \sigma-\alpha)\left(c-\frac{k(4-(i\sigma-\alpha)^2)}{1-(i\sigma-\alpha)^2}\right)\\
&=(\ri \sigma-\alpha)\left(c-k-\frac{3k(1+\sigma^2-\alpha^2 - 2\ri\sigma\alpha)}{(1+\sigma^2-\alpha^2)^2+4\sigma^2\alpha^2}\right),~~\sigma\in\RM.
\end{aligned}
\end{equation}
In particular, the real and imaginary parts of $\lambda=\lambda(\sigma)$ on the essential spectrum are given by
\begin{subequations}\label{RI}
  \begin{align}
\realpart{\lambda}&=-\alpha\left(c-k+3k\left(\frac{\alpha^2+\sigma^2-1}{(1+\sigma^2-\alpha^2)^2+4\sigma^2\alpha^2}\right)\right)\label{Rel}\\
\impart{\lambda}&=\sigma\left(c-k-3k\left(\frac{\alpha^2+\sigma^2+1}{(1+\sigma^2-\alpha^2)^2+4\sigma^2\alpha^2}\right)\right)\label{Ima}.
  \end{align}
\end{subequations}
 Note that for large $\sigma$, the expression for $\realpart{\lambda}$ becomes
\eq{
\realpart{\lambda}\to -\alpha\left(c-k\right)\text{ as }\sigma\to \infty{{,}}
}{asb}
and hence the essential spectrum has a vertical asymptote at $\realpart{\lambda}=-\alpha(c-k)$.  Noting that the existence 
theory from Lemma \ref{L1} implies that $c>k$, it follows that $\alpha>0$ is a necessary condition for the essential
spectrum of $\mathcal{A}_\alpha[u_0]$ to be contained in the open left-half plane.
We further note that the imaginary part of the essential spectrum has a singularity at $\sigma=0$ when $\alpha=1$, and as such we restrict our attention
throughout to the case\footnote{For completeness, we will occasionally make remarks concerning the case when $\alpha>1$.} when $0<\alpha<1$.

Continuing, if $0<\alpha<1$ then the condition ensuring that $\realpart{\lambda}<0$ for all $\sigma\in\mathbb{R}$ is given by
\begin{equation}\label{polycond}
 \left(c -k \right) \sigma^4+\left(2 \alpha^2  (c - k)  +2 c +5 k \right) \sigma^2  +\left(\alpha^2  -1\right) \left(\alpha^2  (c -k)   -c +4 k \right)> 0\text{ for all }\sigma\in\mathbb{R}.
\end{equation}
Treating the above as a quadratic polynomial in $\sigma^2$, and noting that the coefficients of $\sigma^4$ and $\sigma^2$ are both positive\footnote{The $\sigma^2$
coefficient is readily seen to be positive for $\alpha\in(0,1)$ after rewriting it as $2c(\alpha^2+1)+k(5-2\alpha^2)$ and recalling from Lemma \ref{L1} that $k>0$.},
it follows that \eqref{polycond} holds provided that the last term is positive as well, i.e. provided that
\begin{equation}\label{conddef1}
0< \alpha^2< \frac{c-4k}{c-k}.
\end{equation}
It follows for $\alpha$ satisfying \eqref{conddef1} that the essential spectrum of the weighted operator $\mathcal{A}_\alpha[u_0]$ acting on $L^2(\RM)$ is contained
in the open left-half complex plane {{$\realpart{\lambda}<0$}}.  

Concerning the size of the  ``spectral gap'', i.e. the distance between the imaginary axis and the 
(necessarily stable) essential spectrum, we note by elementary calculus that the real part \eqref{Rel} is maximized  as a function of $\sigma$
when either $\sigma\to\pm\infty$ or when
\[
\frac{\partial}{\partial\sigma}{{\realpart{\lambda(\sigma)}}}=2\sigma\frac{4(1-\alpha^2)-(\sigma^2 + \alpha^2 - 1)^2}{((1+\sigma^2-\alpha^2)^2+4\sigma^2\alpha^2)^2}=0.
\]
For $\alpha\in(0,1)$ the above derivative vanishes at the three roots $\sigma=0$ and $\sigma^2=1-\alpha^2  + 2\sqrt{1-\alpha^2}$.  Continuing to
assume \eqref{conddef1} holds and evaluating \eqref{Rel} at each
critical point, we find that $\realpart{\lambda}$ is largest at $\sigma=0$ and that
\begin{equation}\label{e:ess_spec_compare}
\left. \realpart{\lambda(\sigma)} \right|_{\substack{\sigma = 0}}
=-\alpha\left(c-k-\frac{3k}{1-\alpha^2}\right)>-\alpha(c-k)=\lim_{\sigma\to\pm\infty}\lambda(\sigma).
\end{equation}
Consequently, assuming that \eqref{conddef1} holds, the point at $\sigma=0$ is the closest to the imaginary axis.
By a similar analysis as the proof of Lemma \ref{L0}, the above work establishes the following result.

\begin{proposition}\label{L3}
For $\alpha\in(0,1)$, the essential spectrum of the operator $\mathcal{A}_a[u_0]$ acting on $L^2(\RM)$ lies in the open left-half complex plane $\realpart{\lambda}<0$
provided that the weight $\alpha$ satisfies \eqref{conddef1}, in which  case the spectral gap for the essential spectrum is given by 
\begin{equation}\label{SG2}
\Delta_\alpha=\alpha\left(c-k-\frac{3k}{1-\alpha^2}\right)>0.
\end{equation}
\end{proposition}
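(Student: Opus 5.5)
The plan is to reduce the statement to an explicit study of the curve \eqref{lmroots} traced out by $\sigma\in\RM$, using the Weyl essential spectrum theorem exactly as in Section \ref{s41}. First, since $e^{\alpha\xi}$ conjugation only shifts $d/d\xi\mapsto d/d\xi-\alpha$ and $u_0\to k$ exponentially fast with rate $\sqrt{(c-4k)/(c-k)}$ (Remark \ref{R:decay}), the coefficients of $\mathcal{A}_\alpha[u_0]$ still converge exponentially to those of the constant-coefficient asymptotic operator in \eqref{DPLeigessW}. Hence the essential spectrum of $\mathcal{A}_\alpha[u_0]$ on $L^2(\RM)$ is bounded to the right by the Fourier curve $\{\lambda:\mathcal{P}(\lambda,i\sigma-\alpha)=0,\ \sigma\in\RM\}$: the curve itself together with the regions it encloses where the count of roots of $\mathcal{P}(\lambda,\cdot)$ with real part greater than $-\alpha$ differs from its value for $\realpart{\lambda}\gg1$. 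I will pin down this region, and the root-count equality on the component to the right of the curve, by the same large-$|\lambda|$ root asymptotics \eqref{3roots} obtained from Lemma \ref{PegL}. Those asymptotics give, for $\realpart{\lambda}$ large, the shifted configuration of one root with real part less than $-\alpha$ and two roots with real part greater than $-\alpha$. Because the count can change only when $\lambda$ crosses the curve, the whole component to the right of the curve carries the same count as $\realpart{\lambda}\to+\infty$, so it is Fredholm of index zero and free of essential spectrum. The essential spectrum therefore lies in the closed region to the left of the curve, and it suffices to show that $\sup_\sigma\realpart{\lambda(\sigma)}<0$.

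Second, compute $\realpart{\lambda(\sigma)}$ as in \eqref{Rel}. Clearing the positive denominator, the condition $\realpart{\lambda}<0$ for all $\sigma$ becomes \eqref{polycond}. For $\alpha\in(0,1)$ the $\sigma^4$ and $\sigma^2$ coefficients are positive, so \eqref{polycond} holds for all $\sigma$ if and only if the constant term $(\alpha^2-1)(\alpha^2(c-k)-c+4k)$ is positive, i.e. if and only if \eqref{conddef1} holds. This gives strict negativity of the real part at every point of the curve.

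Third, for the size of the gap, differentiate \eqref{Rel} in $\sigma$. The critical points are $\sigma=0$ and $\sigma^2=1-\alpha^2+2\sqrt{1-\alpha^2}$, together with the limit $\sigma\to\pm\infty$, where $\realpart{\lambda}\to-\alpha(c-k)$. At $\sigma=0$ the value is $-\alpha\bigl(c-k-\tfrac{3k}{1-\alpha^2}\bigr)$, which exceeds $-\alpha(c-k)$. At the nonzero critical point the bracket term $3k(\alpha^2+\sigma^2-1)/(\cdots)$ is positive, so the value there is below $-\alpha(c-k)$ and that point is a minimum. Thus $\sup_\sigma\realpart{\lambda}=-\Delta_\alpha$, and $\Delta_\alpha>0$ is exactly \eqref{conddef1} rearranged, since $(c-k)(1-\alpha^2)>3k\iff\alpha^2<(c-4k)/(c-k)$.

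The main obstacle is the first step: justifying rigorously that no part of the essential spectrum lies to the right of the curve, which is the Fredholm index/root-counting argument. The real-part computations afterwards are routine algebra.
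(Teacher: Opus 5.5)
Your proposal is correct and follows essentially the same route as the paper. Weyl's theorem reduces everything to the curve \eqref{lmroots}, positivity of the quadratic \eqref{polycond} in $\sigma^2$ is equivalent to \eqref{conddef1}, and comparing the value at $\sigma=0$ with the other critical point and the vertical asymptote $-\alpha(c-k)$ gives $\Delta_\alpha$; your root-counting argument just makes explicit what the paper delegates to ``a similar analysis as the proof of Lemma \ref{L0}.'' (Minor aside: clearing denominators in \eqref{Rel} actually gives the $\sigma^2$-coefficient $2(c-k)(1+\alpha^2)+3k=2c(1+\alpha^2)+k(1-2\alpha^2)>2c-k>0$, not the printed one, so the positivity you invoke still holds.)
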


Note for each $\alpha>0$ satisfying \eqref{conddef1} that the complement of the essential spectrum of $\mathcal{A}_\alpha[u_0]$ 
consists of two disjoint, open, and connected components in $\CM$.  For each such $\alpha$, we let
$\Omega_+(\alpha)$ denote the open such component that includes the closed right half plane $\realpart{\lambda}\geq 0$.
Observe in particular that any $\lambda\in\Omega_+(\alpha)$ either belongs to the resolvent set of $\mathcal{A}_\alpha[u_0]$
or else is an isolated eigenvalue with finite multiplicity.  Further, we have the following observation 
that will end up being helpful later.

\begin{corollary}\label{C:roots}
For each $\alpha>0$ satisfying \eqref{conddef1} the roots of the polynomial $\mathcal{P}(\lambda,r-\alpha)=0$ satisfy
\[
\left\{\begin{aligned}
&\realpart{r_1(\lambda)}<-\alpha=\realpart{r_2(\lambda)}<\realpart{r_3(\lambda)},~~\lambda\in\partial\Omega_+(\alpha)\\
&\realpart{r_1(\lambda)}<-\alpha<\realpart{r_j(\lambda)},~~j=2,3,~~\lambda\in\Omega_+(\alpha){{.}}
\end{aligned}\right.
\]
In particular, for all $0<a<\alpha<\sqrt{(c-4k)/(c-k)}$ we have the containment property
\[
\left\{\lambda\in\CM:\realpart{\lambda}\geq 0\right\}\subset \overline{\Omega_+(a)}\subset\Omega_+(\alpha).
\]
\end{corollary}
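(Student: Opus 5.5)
The plan is to reduce the root-counting for the shifted polynomial $\mathcal{P}(\lambda,r-\alpha)$ to a continuity argument in $\lambda$, anchored at $|\lambda|\to\infty$ inside $\Omega_+(\alpha)$. Writing $s=r-\alpha$, the roots of $\mathcal{P}(\lambda,r-\alpha)=0$ are exactly the roots $s$ of $\mathcal{P}(\lambda,s)=0$ shifted by $\alpha$. To match the indexing in the statement, I will read the corollary as concerning the roots $r_j(\lambda)$ of $\mathcal{P}(\lambda,\cdot)$ themselves, relative to the line $\realpart{r}=-\alpha$; this is the natural reading given the weighted dispersion relation \eqref{lmroots}.

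The key observation is that some root $r$ has $\realpart{r}=-\alpha$ exactly when $r=i\sigma-\alpha$ for some $\sigma\in\RM$. By \eqref{lmroots}, this happens precisely when $\lambda$ lies on the curve $\sigma\mapsto\lambda(\sigma)$, i.e. on the essential spectrum of $\mathcal{A}_\alpha[u_0]$. Under \eqref{conddef1} this curve is $\partial\Omega_+(\alpha)$.

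The argument then proceeds in three steps.

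\textbf{Step 1 (asymptotics).} Take $\lambda\in\Omega_+(\alpha)$ with $\realpart{\lambda}>0$ and $|\lambda|$ large. The expansions \eqref{3roots} from Lemma \ref{L0} give
\[
r_1=-1+O(|\lambda|^{-1}),\qquad r_3=1+O(|\lambda|^{-1}),\qquad r_2=\lambda/(c-k)+O(|\lambda|^{-1}).
\]
Since $\alpha<1$, we get $\realpart{r_1}<-\alpha<\realpart{r_j}$ for $j=2,3$.

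\textbf{Step 2 (continuation).} $\Omega_+(\alpha)$ is open and connected. The three roots of a cubic depend continuously on $\lambda$, with a nonvanishing leading coefficient $(c-k)$ in $r$. No root can cross the line $\realpart{r}=-\alpha$ while $\lambda$ stays in $\Omega_+(\alpha)$. So the count "one root to the left of $-\alpha$, two to the right" persists throughout $\Omega_+(\alpha)$. Labeling the unique left root $r_1$ then gives the second line of the claim.

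\textbf{Step 3 (boundary).} For $\lambda\in\partial\Omega_+(\alpha)$, at least one root satisfies $\realpart{r}=-\alpha$. I would argue that exactly one root does, and that the other two lie strictly on either side. Approach $\lambda$ from $\Omega_+(\alpha)$ and pass to the limit, which gives weak inequalities $\realpart{r_1}\le-\alpha\le\realpart{r_{2,3}}$. To make them strict, it suffices to rule out two roots simultaneously on the line $\realpart r=-\alpha$.

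\emph{Main obstacle.} Ruling out two roots on the line amounts to excluding a self-intersection of the curve $\lambda(\sigma)$, and also excluding the case where $r_1$ reaches the line at a boundary point. I would treat the two parts as follows.
\begin{itemize}
\item \emph{Left root.} Use a continuity argument in $\alpha$. At $\alpha=0$, the discussion after Lemma \ref{L0} gives $\realpart{r_1}<0$ strictly on $i\RM$. As $\alpha$ increases within \eqref{conddef1}, $\partial\Omega_+(\alpha)$ deforms continuously.
\item \emph{Two roots $i\sigma_1-\alpha$, $i\sigma_2-\alpha$ on the line.} The two parametrizations \eqref{RI} must then give the same $\lambda$. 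By \eqref{Ima}, $\impart{\lambda}$ is monotone where it matters, so I expect this can be checked directly from \eqref{RI}. If that fails, I would instead accept the statement generically and rely only on the interior statement, which is all that is needed later.
\end{itemize}

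\textbf{Containment.} For $0<a<\alpha<\sqrt{(c-4k)/(c-k)}$, Proposition \ref{L3} gives $\realpart{\lambda}\le-\Delta_a<0$ on $\partial\Omega_+(a)$, so the closed right half-plane lies in $\Omega_+(a)$. For the second inclusion, take $\lambda\in\overline{\Omega_+(a)}$. By the parts already proved (with $a$ in place of $\alpha$), $\realpart{r_1(\lambda)}\le -a$ and $\realpart{r_{2,3}(\lambda)}\ge -a>-\alpha$. Also $\realpart{r_1}\ne-\alpha$, since otherwise $\lambda$ would lie on $\partial\Omega_+(\alpha)$. Continuity along a path from $\lambda$ to large $\lambda$ inside $\overline{\Omega_+(a)}$ then keeps $\realpart{r_1}<-\alpha$, because at the large-$\lambda$ end it is near $-1<-\alpha$. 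Hence no root lies on $\realpart r=-\alpha$ anywhere on this set. So $\overline{\Omega_+(a)}$, being connected, unbounded, and meeting $\Omega_+(\alpha)$ for large $\lambda$, avoids $\partial\Omega_+(\alpha)$ and is therefore contained in $\Omega_+(\alpha)$.
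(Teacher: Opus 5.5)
Your Steps 1--2 are correct and match what the paper tacitly relies on. The paper gives no separate proof: it uses only the large-$|\lambda|$ asymptotics of Lemma~\ref{L0}, the fact that a root can reach $\realpart{r}=-\alpha$ only when $\lambda$ lies on the curve \eqref{lmroots}, and the asserted two-component picture. Your argument proves the second line.

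The gap is Step~3, and it cannot be closed as stated. The monotonicity of $\impart{\lambda(\sigma)}$ you are counting on fails near the top of the range \eqref{conddef1}. Put $F(r)=r\bigl(c-k-\frac{3k}{1-r^2}\bigr)$, so that $\lambda(\sigma)=F(i\sigma-\alpha)$ and the roots are the solutions of $F(r)=\lambda$. (This is the relation behind \eqref{lmroots} and $A^\infty$; the sign of $3kr$ in \eqref{Pdef} as printed is off.) Then $\partial_\sigma\impart{\lambda(\sigma)}=\realpart{F'(i\sigma-\alpha)}$ with $F'(r)=c-k-3k(1+r^2)(1-r^2)^{-2}$. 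At $\sigma=0$ this equals $F'(-\alpha)$, and $F'(-\alpha)\to 2(c-k)(4k-c)/(3k)<0$ as $\alpha^2\uparrow(c-4k)/(c-k)$.

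So for every admissible $(k,c)$ there is an interval of $\alpha$ on which $\impart{\lambda(\sigma)}$ vanishes at some $\sigma_*>0$. The point $\lambda_*=\lambda(\sigma_*)=\lambda(-\sigma_*)$ is then a real self-intersection of the curve lying on $\partial\Omega_+(\alpha)$, and both roots $-\alpha\pm i\sigma_*$ sit on the line there. For $c=1$, $k=0.1$, $\alpha^2=0.6$ one gets $\sigma_*^2\approx0.125$ and $\lambda_*\approx-0.586$, with roots $\approx-0.775\pm0.354i$ and $0.898$. Thus the first line of the corollary is false in this regime. The curve also encloses a loop, giving a third component of the complement in which all three roots lie to the right of $-\alpha$. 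Retreating to ``generic'' validity is not a proof.

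The containment argument has a separate, circular step. From the $a$-information you only know $\realpart{r_1}\le-a$. The claim ``$\realpart{r_1}\neq-\alpha$, since otherwise $\lambda$ would lie on $\partial\Omega_+(\alpha)$'' is no contradiction: showing that $\overline{\Omega_+(a)}$ misses the $\alpha$-curve is exactly what must be proved, and nothing keeps $\realpart{r_1}$ out of $[-\alpha,-a]$. In the loop regime the inclusion actually fails. Take $a^2=0.6<\alpha^2=0.65<2/3$ in the example above: $\lambda_*\in\partial\Omega_+(a)$ has all roots with real part $\ge-0.775>-\alpha$, so $\lambda_*\notin\Omega_+(\alpha)$. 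What holds throughout \eqref{conddef1}, and is essentially what the later sections use, is the interior statement plus the large-$|\lambda|$ behavior.

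To recover the full corollary you must add the hypothesis $F'(-\alpha)>0$, i.e.\ $(c-k)(1-\alpha^2)^2>3k(1+\alpha^2)$. Write $\frac{1+r^2}{(1-r^2)^2}=\frac12\bigl[(1-r)^{-2}+(1+r)^{-2}\bigr]$ and use $\realpart{(p+iq)^{-2}}\le p^{-2}$. This gives $\realpart{F'(-\beta+i\sigma)}\ge F'(-\beta)\ge F'(-\alpha)>0$ for $0\le\beta\le\alpha$. Consequently:
\begin{itemize}
\item $\impart{\lambda}$ is strictly increasing, so the curve is a simple graph over $i\RM$ with exactly two complementary components.
\item No two roots, and no double root (as $F'\neq0$), can share the line.
\item Compare the limits from $\Omega_+(\alpha)$ (one root to the left) and from the left component (two roots to the left, by the asymptotics $r\approx-1,\ \lambda/(c-k)$ as $\realpart{\lambda}\to-\infty$). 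This forces $\realpart{r_1}<-\alpha=\realpart{r_2}<\realpart{r_3}$ and replaces your vague continuity-in-$\alpha$ bullet.
\item The same bound makes $F$ injective on the convex strip $\{-\alpha\le\realpart{r}\le-a\}$, since $F(r_2)-F(r_1)=(r_2-r_1)\int_0^1F'(r_1+t(r_2-r_1))\,dt$. Hence the $a$- and $\alpha$-curves are disjoint.
\item As $F(-\alpha)<F(-a)$ on the real axis, the $\alpha$-curve lies strictly to the left of the $a$-curve, and your connectedness argument then gives the containment.
\end{itemize}
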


\begin{remark}
The fact that \eqref{asb} implies the essential spectrum of $\mathcal{A}_\alpha[u_0]$ is asymptotically vertical presents a 
challenging analytical obstacle  to be overcome if one were to extend, using our methodology, our results to the nonlinear level.  
As we will describe in Section \ref{S:nonlinear_try} below, the presence of the nonlinear 
dispersive term $uu_{xxx}$ in the DP equation \eqref{DP} leads to a loss of derivatives in a contraction-mapping based iteration scheme.  
Unfortunately, these derivatives cannot be regained at a linear level due to \eqref{asb} implying the lack of a linear
smoothing estimate.  See Section \ref{S:nonlinear_try} below for more details.
\end{remark}

\begin{remark}\label{R:decay2}
It is important to note that the the upper bound for $\alpha>0$ in \eqref{conddef1} agrees precisely with the exponential
decay rate of the underlying profile $u_0(\cdot;k,c)$ to its asymptotic end state: see Remark \ref{R:decay} above.
In particular, we note that  the function $u_0(\cdot;k,c)-k$, as well as any derivative in $\xi$ or $c$ of it, will necessarily
belong to $L^2_\alpha(\RM)$ for all $\alpha>0$ satisfying \eqref{conddef1}.
\end{remark}

For completeness, we note the following result concerning the essential spectrum of the operator $\mathcal{A}_\alpha[u_0]$ when $\alpha>1$.

\begin{proposition}\label{P:ess_bigalpha}
When $\alpha>1$, the essential spectrum of the operator $\mathcal{A}_\alpha[u_0]$ acting on $L^2(\RM)$ lies in the open half
plane ${{\realpart{\lambda}}}<0$.  Further, for $\alpha>1$ the spectral gap for the essential spectrum is given by 
\[
\Delta_\alpha=\alpha(c-k),~~\alpha>1.
\]
\end{proposition}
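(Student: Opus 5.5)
The plan is to read everything off the explicit parametrization \eqref{lmroots}--\eqref{RI} of the essential spectrum of $\mathcal{A}_\alpha[u_0]$. As in Section \ref{s42}, the Weyl essential spectrum theorem applied to the conjugated operator identifies the essential spectrum with the curve $\{\lambda(\sigma):\sigma\in\RM\}$ defined by $\mathcal{P}(\lambda,i\sigma-\alpha)=0$. The first thing I would check is that this reduction is still legitimate for $\alpha>1$. That reduction needs the operators $(1-(\partial_\xi-\alpha)^2)^{-1}$ and $(4-(\partial_\xi-\alpha)^2)^{-1}$ appearing in $J_\alpha$ and $\mathcal{L}_\alpha[u_0]$ to be bounded on $L^2(\RM)$. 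Their Fourier symbols are $1-(i\sigma-\alpha)^2$ and $4-(i\sigma-\alpha)^2$. The first vanishes only when $\sigma=0$ and $\alpha=\pm1$, and the second only when $\sigma=0$ and $\alpha=\pm2$. So for $\alpha>1$ the first symbol never vanishes. If $\alpha=2$ causes trouble for the second factor, I would instead work directly with the form \eqref{DPLeigessW}, which involves no inverse of $4-(\partial_\xi-\alpha)^2$. This is the only genuinely delicate point, and I expect it to be harmless.

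Next, I would use the denominator $D(\sigma):=(1+\sigma^2-\alpha^2)^2+4\sigma^2\alpha^2$ appearing in \eqref{Rel}. For $\alpha>1$ it is strictly positive for every $\sigma\in\RM$, since it could vanish only if $\sigma=0$ and $\alpha^2=1$. Hence $\realpart{\lambda(\sigma)}$ is a smooth function of $\sigma$. The key observation is that the numerator satisfies $\alpha^2+\sigma^2-1>0$ for all $\sigma$ when $\alpha>1$. Since $k>0$, formula \eqref{Rel} then gives
\[
\realpart{\lambda(\sigma)}=-\alpha(c-k)-3k\alpha\,\frac{\alpha^2+\sigma^2-1}{D(\sigma)}<-\alpha(c-k)<0
\]
for every $\sigma\in\RM$, using $c>k$ from Lemma \ref{L1}. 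This shows the essential spectrum lies in the open left half plane, with distance at least $\alpha(c-k)$ from the imaginary axis.

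Finally, to identify the gap exactly, I would note that the fraction $(\alpha^2+\sigma^2-1)/D(\sigma)$ tends to $0$ as $|\sigma|\to\infty$, because $D$ grows like $\sigma^4$. Therefore $\sup_\sigma\realpart{\lambda(\sigma)}=-\alpha(c-k)$, approached along the vertical asymptote \eqref{asb} but never attained. Consequently the distance from the essential spectrum to the imaginary axis is exactly $\Delta_\alpha=\alpha(c-k)$. Unlike the case $0<\alpha<1$, no critical-point analysis in $\sigma$ is needed here, because the correction term has a fixed sign.
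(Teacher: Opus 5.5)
Your proposal is correct and follows essentially the same route as the paper. Both arguments observe from \eqref{Rel} that the correction term has a fixed sign when $\alpha>1$, which gives $\realpart{\lambda(\sigma)}<-\alpha(c-k)$ for every $\sigma$, with this bound approached only as $|\sigma|\to\infty$. Your extra check that the Weyl reduction remains valid for $\alpha>1$ is a detail the paper leaves implicit. This includes the harmless point $\alpha=2$, since $\mathcal{A}_\alpha[u_0]$ involves no inverse of $4-(\partial_\xi-\alpha)^2$.
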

\begin{proof}
Continuing with the notation from the calculations above Proposition \ref{L3}, when $\alpha>1$  we see from \eqref{Rel} that
\[
\realpart{\lambda(\sigma)} <-\alpha(c-k)=\lim_{\sigma\to\pm\infty}\lambda(\sigma)<0
\]
for all $\sigma\in\RM$, proving the entirety of the essential spectrum of $\mathcal{A}_\alpha[u_0]$ indeed lies in the open
left half plane.  Regarding the size of the gap, we note when $\alpha>1$
that \eqref{e:ess_spec_compare} becomes
\begin{equation}
\left. \realpart{\lambda(\sigma)} \right|_{\substack{\sigma = 0}}
=-\alpha\left(c-k-\frac{3k}{1-\alpha^2}\right)<-\alpha(c-k)=\lim_{\sigma\to\pm\infty}\lambda(\sigma),
\end{equation}
completing the proof.
\end{proof}

 \begin{figure}[tbp]
\begin{center}
\includegraphics[scale=0.28]{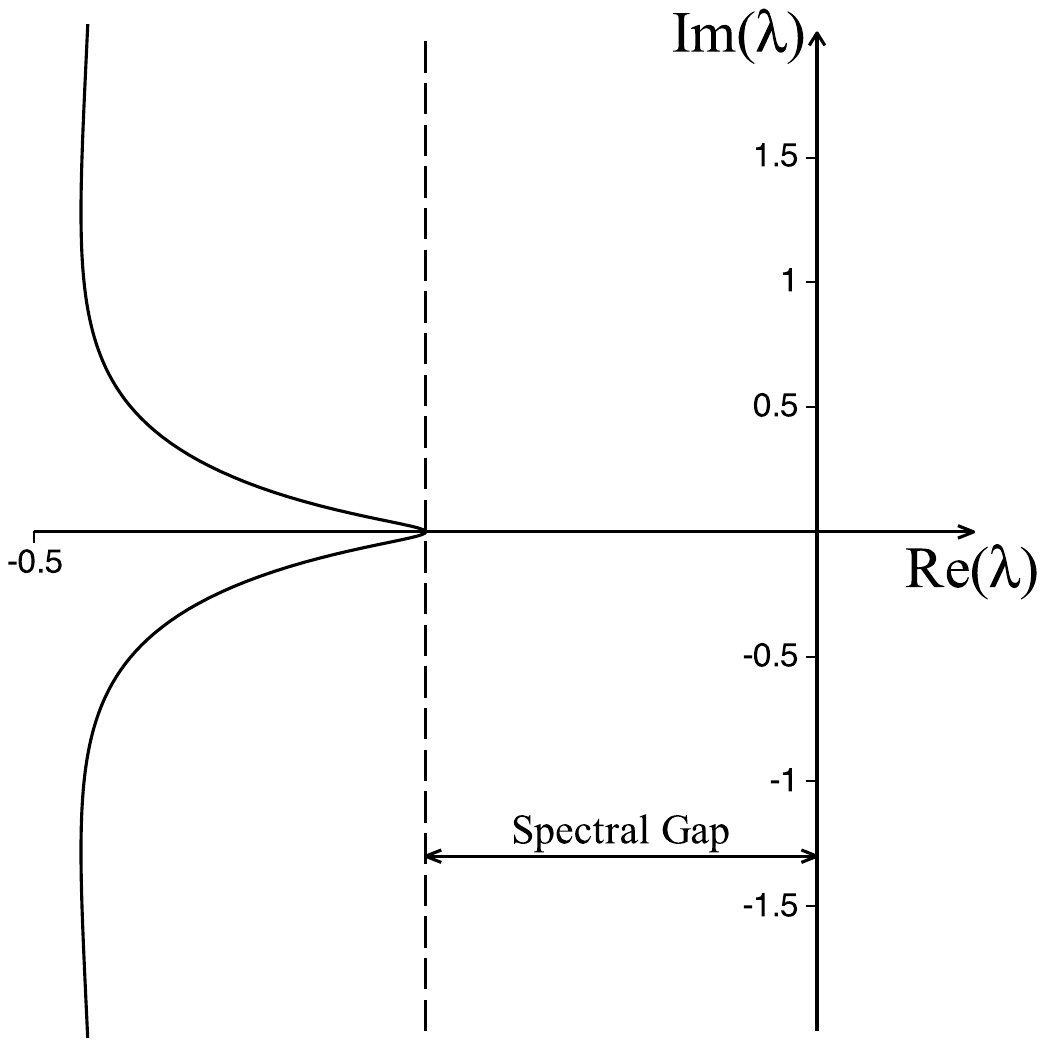}~~~ \includegraphics[scale=0.27]{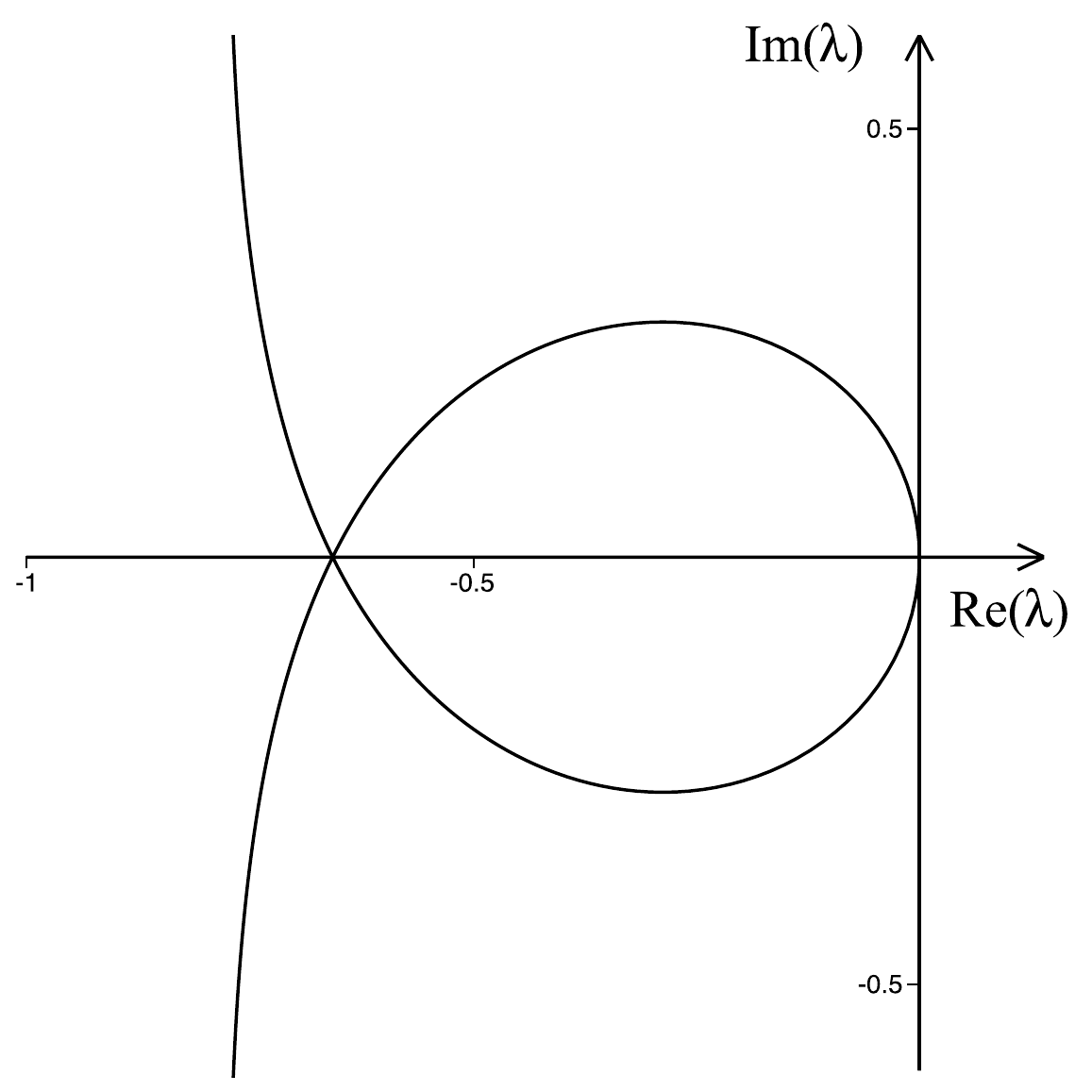}~~~
\includegraphics[scale=0.27]{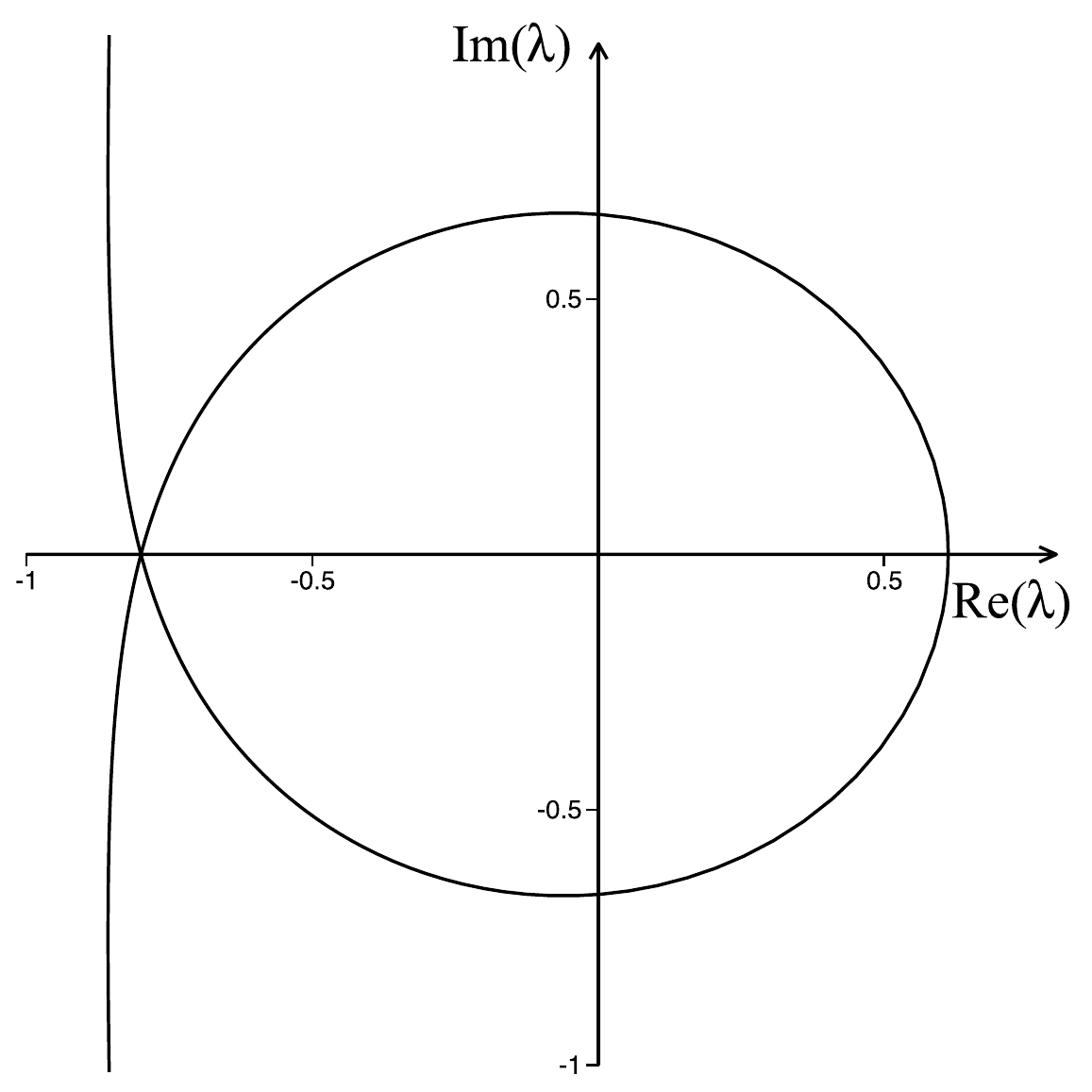}
\end{center}
\caption{ {Graph of the essential spectrum of $\mathcal{A}_\alpha[u_0]$, parametrized by $\sigma$ as given in \eqref{RI} for $0<\alpha<1$.
For these graphs, the underlying wave corresponds to $k=0.1$ and $c=1$, in which case \eqref{conddef1} reduces to $0<\alpha^2<2/3$.  
The  plot on the left takes $\alpha=0.5$, thus satisfying \eqref{conddef1}, indicating stability of the essential spectrum.  The middle plot  takes $\alpha=\sqrt{2/3}$, thus showing an essential spectrum touching the imaginary axis corresponding to marginal stability.  The plot on the right takes $\alpha=0.9>\sqrt{2/3}$, thus displaying an unstable spectrum. 
The plot on the bottom right
takes $\alpha=1.2$ and hence, since $\alpha>1$, also indicating stability of the essential spectrum.\label{fig1}}}
\end{figure}

See Figure \ref{fig1} for a numerical calculation of the essential spectrum about the wave $u_0(\cdot;k,c)$ with $(k,c)=(0.1,1)$ in the cases
when $\alpha=0.5$ (when \eqref{conddef1} holds){{, when $\alpha=\sqrt{2/3}$ (case of marginal stability when $\alpha$ is at the upper boundary
of the condition \eqref{conddef1}, and when $\alpha=0.9$ (unstable essential spectrum when $\alpha<1$.  
Figure \ref{fig2} demonstrates an analogous calculation in the case when $\alpha$ is beyond the upper limit in \eqref{conddef1}.}}

 \begin{figure}[tbp]
\begin{center}
\includegraphics[scale=0.35]{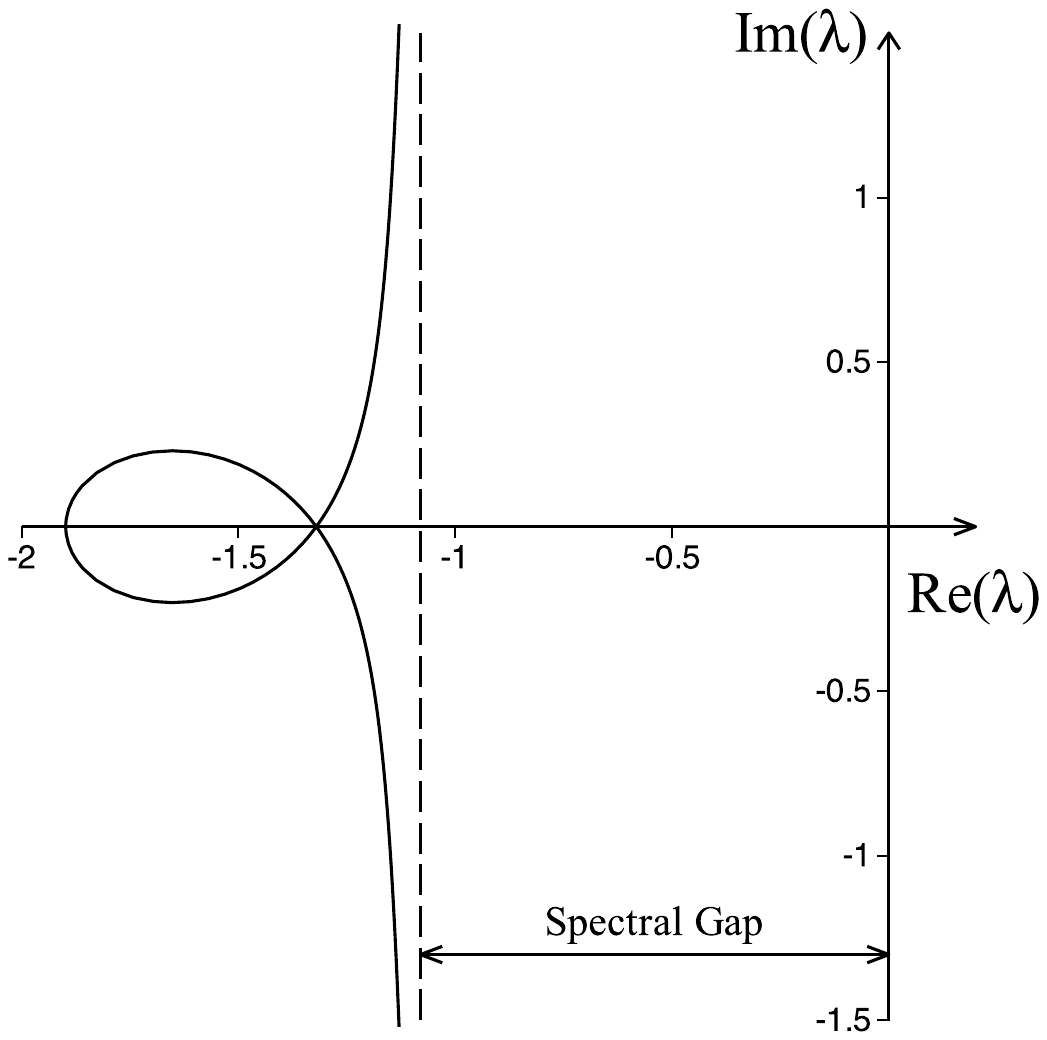}
\end{center}
\caption{ {
Using the same $k$ and $c$ values from Figure \ref{fig1}, this is a graph of the essential spectrum of $\mathcal{A}_\alpha[u_0]$ in the case where $\alpha=1.2$, indicating
the expected stability of the essential spectrum.\label{fig2}}}
\end{figure}

\section{Analysis of the Point Spectrum}\label{S:pt_spec}
\label{s2}

We continue our study of the spectral properties of $\mathcal{A}[u_0]$ by studying its point spectrum on $L^2(\RM)$
and $L^2_\alpha(\RM)$.  
Recall from \cite{Li2019} that all smooth solitary wave solutions $u_0$ constructed in Lemma \ref{L1} are spectrally
stable, i.e. that $\sigma\left(\mathcal{A}[u_0]\right)= i\RM$ (see Theorem \ref{T:spec_stab} above).  
In this section we first refine this result to show
that, in fact, $\lambda=0$ is the only eigenvalue of $\mathcal{A}[u_0]$ acting on $L^2(\RM)$, thus ruling out the possibility
of $\mathcal{A}[u_0]$ having any non-zero eigenvalues embedded in the essential spectrum.  
{{Our analysis will rely}} heavily on the complete integrability of the governing DP equation,
which we will review below.  We then consider the point spectrum in the weighted space $L^2_\alpha(\RM)$,
proving the existence of a spectral gap between the non-zero spectrum and the imaginary axis for appropriate values of $\alpha$.
Finally, we characterize the generalized kernel of the linearized operator $\mathcal{A}[u_0]$ acting on
the weighted space $L^2_\alpha(\RM)$.

\subsection{Weighted vs. Unweighted Point Spectrum}

To begin this section, we start with some important observations regarding the point spectrum of $\mathcal{A}[u_0]$
acting on $L^2(\RM)$ vs acting on $L^2_\alpha(\RM)$.  First, recall as in the previous section\footnote{Specifically,
see \eqref{spec_w} and the surrounding exposition.} that studying the 
spectrum of $\mathcal{A}[u_0]$ on the weighted space $L^2_\alpha(\RM)$ is equivalent to studying the spectrum
of the conjugated operator $\mathcal{A}_\alpha[u_0]$ acting on $L^2(\RM)$.  
The main result of this section is the following equivalence of the 
point spectrum of $\mathcal{A}[u_0]$ and $\mathcal{A}_\alpha[u_0]$ in the closed right half-plane.

\begin{proposition}\label{P:spec_equiv}
Let $u_0(\cdot;k,c)$ be a smooth solitary wave of the DP equation \eqref{DP} as constructed in Lemma \ref{L1},
and let $\alpha>0$ satisfy \eqref{conddef1}.
When $\realpart{\lambda}\geq 0$, we have that $\lambda$ is an $L^2(\RM)$ eigenvalue of $\mathcal{A}[u_0]$
if and only if $\lambda$ is an $L^2(\RM)$ eigenvalue of $\mathcal{A}_\alpha[u_0]$.
\end{proposition}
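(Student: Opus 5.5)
We plan to rewrite the eigenvalue problem \eqref{DPLeig} as a first-order ODE system and use exponential dichotomies, so that everything reduces to the spatial decay rates of solutions at $\pm\infty$. Note first that $v\in L^2(\RM)$ solves \eqref{DPLeig} if and only if $w=e^{\alpha\xi}v$ solves the conjugated problem $\mathcal{A}_\alpha[u_0]w=\lambda w$. The question is therefore whether an $L^2$ eigenfunction $v$ also satisfies $e^{\alpha\xi}v\in L^2$, and conversely whether $e^{-\alpha\xi}w\in L^2$ for an $L^2$ eigenfunction $w$ of $\mathcal{A}_\alpha[u_0]$.

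Since $u_0<c$ by Lemma \ref{L1}, the coefficient $u_0-c$ of $v'''$ in \eqref{DPLeig} never vanishes. Hence \eqref{DPLeig} is a regular third-order ODE, equivalent to $Y'=M(\xi,\lambda)Y$ with $Y=(v,v',v'')$. The coefficients converge exponentially (Remark \ref{R:decay}) to constant matrices $M_\infty(\lambda)$, whose eigenvalues are exactly the roots $r$ of $\mathcal{P}(\lambda,r)=0$.

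We then describe solutions at each end by the standard asymptotic theory (Levinson / Coppel roughening). For $\lambda$ not on the boundary curves, every solution is a combination of functions behaving like $e^{r_j(\lambda)\xi}(1+o(1))$. At $+\infty$, a solution is $L^2$ (respectively $L^2_\alpha$) iff it lies in the span of modes with $\realpart{r_j}<0$ (respectively $<-\alpha$). At $-\infty$ the condition is $\realpart{r_j}>0$ (respectively $>-\alpha$).

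Now take $\realpart{\lambda}>0$. Lemma \ref{L0} gives $\realpart{r_1}<0<\realpart{r_{2}},\realpart{r_3}$, and Corollary \ref{C:roots} gives $\realpart{r_1}<-\alpha<\realpart{r_{2,3}}$ (with the paper's shifted labeling, i.e.\ roots of $\mathcal{P}(\lambda,\cdot)$ compared against $-\alpha$). So the root sets splitting at $0$ and at $-\alpha$ coincide. It follows that $L^2$ decay at $+\infty$ means $v\sim e^{r_1\xi}$, which is automatically $L^2_\alpha$ there, and growth restrictions at $-\infty$ are identical. Hence the spaces of decaying solutions agree, and eigenvalues coincide.

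The main obstacle is $\realpart{\lambda}=0$, where $\lambda$ lies in the $L^2$ essential spectrum and $\realpart{r_2}=0$. At $-\infty$, the $L^2$ condition requires selecting modes with $\realpart{r}>0$ strictly. The $L^2_\alpha$ condition requires $\realpart{r}>-\alpha$, which admits the neutral mode $e^{r_2\xi}$ as well, so an $L^2_\alpha$ eigenfunction might not be in $L^2$. For $\lambda\neq0$ on $i\RM$, $r_2=i\sigma$ is simple, and Levinson still applies because the perturbation is integrable. An $L^2$ eigenfunction is a combination of $e^{r_2\xi},e^{r_3\xi}$ modes at $-\infty$ with zero $r_2$ coefficient, since a pure oscillation is not $L^2$; it is therefore also in $L^2_\alpha$. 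The direction $L^2\Rightarrow L^2_\alpha$ then follows exactly as above, and it remains to show that any $w$ eigenfunction of $\mathcal{A}_\alpha$ has zero coefficient on the neutral mode at $-\infty$.

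For this we would use an adjoint/flux argument. The Hamiltonian structure $J\mathcal{L}$ yields a conserved bilinear (Wronskian-type) form between solutions of the eigenvalue problem at $\lambda$ and of the adjoint problem at $-\bar\lambda=\lambda$. Pairing $v$ against its own conjugate, the form evaluates to $0$ at $+\infty$, by decay, and to a nonzero multiple of $|{\rm coef}|^2$ times the group velocity at $-\infty$, which is nonzero because the group velocity is strictly negative for $k<c/4$. This forces the neutral coefficient to vanish. The case $\lambda=0$ is handled separately, since there $r_2=0$: the kernel is spanned by $u_0'$, which decays like $e^{r_+\xi}$ with $r_+>\alpha$ by Remark \ref{R:decay2}, and we verify directly that the remaining solutions with neutral behavior (constants and the like) are excluded from $L^2_\alpha$ by the explicit asymptotics.

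I expect this flux identity on the imaginary axis to be the delicate step, and would try the squared-eigenfunction-free energy identity $\frac{d}{d\xi}\realpart{\bar v\,\cdot}$ derived from \eqref{DPLeig} first.
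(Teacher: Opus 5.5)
Your route is correct, but it is organized differently from the paper's. The paper does not argue mode by mode. In Lemma \ref{DA} it observes that the Jost solutions of the conjugated system are the unweighted ones multiplied by $e^{\pm\alpha\xi}$. The product defining the Evans function is therefore unchanged, so $D_\alpha\equiv D$ on $\Omega_+(\alpha)$. Zeros of $D$ in $\Omega_+(\alpha)$ are the $L^2(\RM)$ eigenvalues of $\mathcal{A}_\alpha[u_0]$. On $\realpart{\lambda}\geq 0$ they are the $L^2(\RM)$ eigenvalues of $\mathcal{A}[u_0]$; for $\lambda\in i\RM$ the paper justifies this only by appeal to the reversibility $v(\xi)\mapsto\overline{v(-\xi)}$ and to Pego--Weinstein. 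On $i\RM$, $D(\lambda)=0$ says exactly that the solution decaying at $+\infty$ has no $e^{r_1\xi}$ component at $-\infty$. So the paper faces precisely the neutral-mode issue you isolate. Your flux identity gives an explicit, self-contained mechanism for that step, namely the strictly negative group velocity, and needs no Evans function. The Evans-function packaging buys, in addition, analyticity of $D$ on $\Omega_+(\alpha)$, which the paper reuses for the spectral gap in Proposition \ref{P:gap}.

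When you write out the flux step, pair $v$ with $\chi=(1-\partial_\xi^2)^{-1}\mathcal{L}[u_0]\bar v$, using the bounded inverses, rather than with $\bar v$ and its derivatives directly.

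\begin{itemize}
\item Using $\bar\lambda=-\lambda$ and the real coefficients, one checks that $\chi$ solves the formal transpose of the local third-order ODE \eqref{DPLeig}. This holds for either sign convention relating $J\mathcal{L}$ to \eqref{DPLeig}.
\item Hence the Lagrange bilinear concomitant $B(v,\chi)$ is constant in $\xi$.
\item It vanishes as $\xi\to+\infty$, since $v$ decays there and $\chi$ is bounded.
\item If $v=a e^{i\sigma\xi}+(\text{decaying})$ as $\xi\to-\infty$, then, up to a convention-dependent sign, $B(v,\chi)$ tends there to $|a|^2\,\widehat{\mathcal{L}}_\infty(\sigma)(1+\sigma^2)^{-1}\tau'(i\sigma)$. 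Here $\tau$ is the symbol of the limiting constant-coefficient ODE and $\widehat{\mathcal{L}}_\infty(\sigma)=\frac{(4k-c)+(k-c)\sigma^2}{4+\sigma^2}<0$.
\item Writing $\lambda=i\omega(\sigma)$ as in \eqref{firstdisp}, one has $\tau'(i\sigma)=-(1+\sigma^2)\omega'(\sigma)$, with $\omega'(\sigma)=c-k\frac{4-\sigma^2+\sigma^4}{(1+\sigma^2)^2}\geq c-4k>0$.
\end{itemize}

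Both factors are nonzero precisely because $k<c/4$, so $a=0$.

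Your separate analysis at $\lambda=0$ is unnecessary. The statement concerns eigenvalues only, and $u_0'$ lies in both $L^2(\RM)$ and $L^2_\alpha(\RM)$ for $\alpha$ satisfying \eqref{conddef1}.
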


The above is in fact a direct corollary of Lemma \ref{DA} below.  
Recalling Theorem \ref{T:spec_stab}, it immediately follows that the weighted operator $\mathcal{A}_\alpha[u_0]$
has no point spectrum with positive real part.  For our asymptotic stability analysis, however, we need to additionally
rule out the existence of non-zero purely imaginary eigenvalues of $\mathcal{A}_\alpha[u_0]$, as well as eigenvalues
of $\mathcal{A}_\alpha[u_0]$ with negative real part and arbitrariliy close to the imaginary axis (i.e. we need the existence 
of a spectral gap).  By Proposition \ref{P:spec_equiv}, we can rule out the existence of non-zero imaginary eigenvalues of $\mathcal{A}_\alpha[u_0]$
by showing that $\lambda=0$ is the only $L^2(\RM)$ eigenvalue of $\mathcal{A}[u_0]$ on the 
imaginary axis\footnote{Note that, due to Theorem \ref{T:spec_stab} and the fact that if
$\lambda$ is an $L^2(\RM)$ eigenvalue of $\mathcal{A}[u_]$ then so are $\pm\lambda$ and $\pm\overline{\lambda}$, this equivalently
shows that $\lambda=0$ is the only eigenvalue of $\mathcal{A}[u_0]$ acting on $L^2(\RM)$.  This strong form of spectral
stability is new for the DP equation.}.  This will be shown in the next section by relying on the complete
integrability of the DP equation.  The existence of a spectral gap will then by established through the use of resolvent estimates
 in Section \ref{S:LinDecay} below.

\begin{remark}\label{R:equiv}
When $\realpart{\lambda}<0$ and $\lambda\in\Omega_+(\alpha)$, the above equivalence is no longer true: in particular,
if such a $\lambda$ is an $L^2(\RM)$ eigenvalue of $\mathcal{A}_\alpha[u_0]$ then $\lambda$ need not be an
$L^2(\RM)$ eigenvalue of $\mathcal{A}[u_0]$.  This is a reflection of the fact that spectrum
of $\mathcal{A}_\alpha[u_0]$ is not symmetric about the imaginary axis when $\alpha>0$.  See Remark \ref{R:wspec_nosymmetry} for details.
Consequently, the spectral stability result of Theorem \ref{T:spec_stab} cannot be used to infer the existence of a spectral gap
for $\alpha>0$  satisfying \eqref{conddef1}.
\end{remark}

The proof of Proposition \ref{P:spec_equiv} is based on a study of the Evans function associated with the spectral
problems for $\mathcal{A}[u_0]$ and $\mathcal{A}_\alpha[u_0]$ posed on $L^2(\RM)$.  
To begin,
first note that the system (\ref{DPLeig}) can be written as a first-order linear dynamical system
of the form
\begin{equation}
X'(\xi)=A(\xi,\lambda)\,X(\xi),
\label{linear}
\end{equation}
where $A$ is given explicitly by the $3\times 3$  matrix 
\eq{
A(\xi,\lambda)=
\left(
\begin{array}{ccccc}
0&&1&&0\\
0&&0&&1\\
\left(u_0'''-\lambda-u_0'\right)/(c-u_0)&&\left(3u_0''-4u_0+c\right)/(c-u_0)&&\left(\lambda+3u_0'\right)/(c-u_0)
\end{array}
\right).
}{A}
The asymptotic behavior as $\xi\rightarrow\infty$ of the solutions to (\ref{linear}) 
is determined by the matrix
$A^{\infty}(\lambda)=\lim_{\xi\rightarrow \infty}A(\xi,\lambda)$ which, by Lemma \ref{L1}, is given explicitly by
\[
A^{\infty}(\lambda) = \begin{pmatrix}
0&1&0\\
0&0&1\\
-\lambda/(c-k)&\left(c-4k\right)/(c-k)&\lambda/(c-k)
\end{pmatrix}.
\]
The characteristic polynomial of $A^{\infty}$ is given by 
\eq{{\mathcal{P}}(\lambda,r)= 0,}{Pdef2}
with ${\mathcal{P}}$ given in \eqref{Pdef}.  In particular, noting that
\[
\det\left(A^\infty(\lambda)\right)=-\frac{\lambda}{c-k}
\]
we see for all $\realpart{\lambda}>0$ that the asymptotic
matrix $A^\infty(\lambda)$ has one eigenvalue with negative real part and two eigenvalues with positive real part.  Indeed,
the real parts of the eigenvalues 
of $A^\infty(\lambda)$ can only change when $\lambda$ crosses the imaginary axis (i.e.~the essential spectrum),
and the claim is easy to verify for $\lambda$, say, real and positive\footnote{For such $\lambda$, the determinant calculation above implies that 
the product of all three eigenvalues of $A^\infty(\lambda)$ must be negative.  Since for such $\lambda$ the entries of $A^\infty(\lambda)$ are real, the result trivially follows.}.
Consequently, for $\realpart{\lambda}>0$ the linear system \eqref{linear} has three linearly independent solutions, one decaying exponentially as $\xi\to\infty$
and two decaying exponentially as $\xi\to-\infty$.

Following the notation from Section \ref{s41}, for $\realpart{\lambda} > 0$ we denote the 
unique eigenvalue of $A^{\infty}(\lambda)$ with negative real part as $r_1(\lambda)$ and its corresponding eigenvector by
$v_+$, respectively.
System (\ref{linear}) then has a unique solution $X_+(\xi)$ satisfying\,\footnote{See, for instance,  \cite{codd} for details.}
\eq{
\lim_{\xi\rightarrow \infty} X_{+}(\xi;\lambda)e^{-r_1(\lambda)\,\xi}=v_+.
}{XP}
In this situation where the linear system (\ref{linear}) has only one linearly independent solution decaying as $\xi\to\infty$, 
the definition of the  Evans function requires that we additionally consider the adjoint system \cite{Pego} 
\begin{equation}
Y'(\xi)=-A^T(\xi,\lambda)\,Y(\xi).
\label{adjoint}
\end{equation}
The asymptotic behavior of the solutions of \eqref{adjoint} as $\xi\rightarrow-\infty$ is determined by
the  eigenvalues of the matrix
\[
-\lim_{\xi\to-\infty}A^T(\xi,\lambda)
=-\left(A^\infty(\lambda)\right)^T,
\]
which, by the arguments above, for $\realpart{\lambda}>0$ has exactly one eigenvalue with positive real part and two eigenvalues with negative real parts.
For $\realpart{\lambda} > 0$, the unique eigenvalue of $-\left(A^{\infty}\right)^T$ with positive real part is given by
$-r_1(\lambda)$ and we denote the corresponding eigenvector by
$v_- $, chosen to be normalized so that $v_-\cdot v_+=1$.
System (\ref{adjoint}) then has a unique solution $Y_-$ satisfying
\eq{
\lim_{\xi\rightarrow -\infty} Y_{-}(\xi;\lambda)e^{-r_1(\lambda)\,\xi}=v_-.
}{YM}
The Evans function $D(\lambda)$ can then be defined as \cite{Pego}
\begin{equation}\label{e:Evans}
D(\lambda)=X_+(0;\lambda)\cdot Y_-(0;\lambda).
\end{equation}
While the above construction defines the Evans function for $\realpart{\lambda}>0$, it is well known
that $D(\lambda)$ will continue to be well-defined and complex analytic in any region of the complex plane 
where
$r_1(\lambda)$ continues to remain the unique eigenvalue of  $A^{\infty}(\lambda)$ with
the smallest real part: see \cite{Pego}.  In the region where $r_1(\lambda)$ is the unique eigenvalue of $A^\infty(\lambda)$
with negative real part (which, in our case, includes the entire open right half complex plane), the zeroes of the Evans function
corresponds precisely to the point spectrum of \eqref{DPLeig} considered on $L^2(\RM)$.  For more details, see \cite{Pego}.

While the above discussion uses the Evans function to characterize the point spectrum of $\mathcal{A}[u_0]$ on $L^2(\RM)$,
our goal here is to consider its point spectrum on the weighted space  $L^2_\alpha(\RM)$.  
Proceeding as in Section \ref{s42}, we see that $\lambda\in\CM$ is an  $L^2_\alpha(\RM)$-eigenvalue
of the operator $\mathcal{A}[u_0]$ if and only if $\lambda$ is and $L^2(\RM)$-eigenvalue of the conjugated operator $\mathcal{A}_\alpha[u_0]$
defined in \eqref{spec_w}, whose spectral problem is given explicitly in \eqref{DPLeigessW}.
Similar to the above, the asymptotic behavior of solutions of \eqref{DPLeigessW} is obtained by studying the limiting system where
the coefficients involving $u_0$ are replaced by their (constant) asymptotic values as $\xi\to\infty$.  
Solutions of this constant coefficient system are of the form $e^{r\xi}$ where the $r\in\CM$ are roots of the equation 
\begin{equation} \label{DPLeigessPoly}
\mathcal{P}(\lambda,r-\alpha)=(\lambda+(r-\alpha)(k-c))(1-(r-\alpha)^2)+3k(r-\alpha) = 0, 
\end{equation}
where we note $\mathcal{P}$ is the same as that given in \eqref{Pdef}.  The next result essentially states that in 
the region $\Omega_+(\alpha)$ the Evans function associated to $\mathcal{A}_\alpha[u_0]$ agrees exactly with the 
Evans function for $\mathcal{A}[u_0]$ defined above\footnote{Compare, for instance, to Proposition 2.6 in \cite{Pego}.}.

\begin{lemma}\label{DA}
Let $\alpha>0$ be such that  \eqref{conddef1} holds.  Then:
\begin{itemize}
\item[(i)] The Evans function $D(\lambda)$ defined in \eqref{e:Evans} is well-defined and complex analytic
in $\Omega_+(\alpha)$.  
\item[(ii)] Suppose that $\realpart{\lambda}\geq 0$.  Then $\lambda$ is an eigenvalue of $\mathcal{A}[u_0]$ if and only $D(\lambda)=0$.
\item[(iii)] For $\lambda\in\Omega_+(\alpha)$, we have that $\lambda$ is an $L^2(\RM)$ eigenvalue of $\mathcal{A}_\alpha[u_0]$
if and only if $D(\lambda)=0$.
\end{itemize}
\end{lemma}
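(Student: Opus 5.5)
The plan is to follow Pego \& Weinstein's argument (compare \cite[Proposition 2.6]{Pego}). The basic observation is that the conjugation $w=e^{\alpha\xi}v$ turns the first-order system \eqref{linear} into
\[
W'=\left(A(\xi,\lambda)+\alpha I\right)W .
\]
This system has the same fundamental solutions multiplied by $e^{\alpha\xi}$, and its asymptotic eigenvalues are the roots $r$ of $\mathcal{P}(\lambda,r-\alpha)=0$, i.e.\ $r_j(\lambda)+\alpha$. So the Evans function built for $\mathcal{A}_\alpha[u_0]$ from $X_+e^{\alpha\xi}$ and $Y_-e^{-\alpha\xi}$ has the same dot product at $\xi=0$ as $D(\lambda)$. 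Hence it suffices to control where $D$ is defined and what its zeros mean.

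\textbf{Part (i).} By Corollary \ref{C:roots}, for every $\lambda\in\Omega_+(\alpha)$ the root $r_1(\lambda)$ of $\mathcal{P}(\lambda,\cdot)=0$ satisfies $\realpart{r_1}<-\alpha<\realpart{r_j}$ for $j=2,3$. In particular $r_1$ is the unique eigenvalue of $A^\infty(\lambda)$ of smallest real part, separated from the others by a uniform gap on compact subsets. It is therefore simple, and $r_1(\lambda)$ and $v_\pm(\lambda)$ are analytic on $\Omega_+(\alpha)$. Since $u_0-k$ and its derivatives decay like $e^{-\sqrt{(c-4k)/(c-k)}|\xi|}$ (Remark \ref{R:decay}), $A(\xi,\lambda)-A^\infty(\lambda)$ is exponentially integrable. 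The standard Levinson/gap-lemma construction (\cite{codd}, \cite{Pego}) then gives $X_+$ and $Y_-$ satisfying \eqref{XP}, \eqref{YM}, analytic in $\lambda$. Consequently $D$ extends analytically to $\Omega_+(\alpha)$, which contains the closed right half plane.

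\textbf{Part (iii).} For $\lambda\in\Omega_+(\alpha)$, the conjugated system has exactly one asymptotic exponent with negative real part, namely $r_1+\alpha$, and two with positive real part, $r_{2,3}+\alpha$. At $-\infty$ the limit matrix is the same $A^\infty$, because $u_0$ is even. An $L^2$ eigenfunction of $\mathcal{A}_\alpha[u_0]$ must therefore be a multiple of $X_+e^{\alpha\xi}$ at $+\infty$, and must lie in the two-dimensional unstable subspace at $-\infty$. That subspace is exactly the annihilator of $Y_-e^{-\alpha\xi}$: $Y_-$ is the unique adjoint solution growing at $-\infty$ with rate $-r_1$, and the pairing $X\cdot Y$ is constant in $\xi$. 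Hence $X_+e^{\alpha\xi}$ decays at $-\infty$ if and only if $X_+\cdot Y_-\equiv 0$, that is, if and only if $D(\lambda)=0$. This is the standard argument of \cite{Pego}. One also needs that $L^2$ solutions of \eqref{DPLeigessW} correspond to solutions of the ODE \eqref{linear} with decaying derivatives. This holds because $c-u_0>0$ (Lemma \ref{L1}), so the third-order equation is nondegenerate.

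\textbf{Part (ii).} For $\realpart{\lambda}>0$ the same argument applies with $\alpha=0$, since the splitting $\realpart{r_1}<0<\realpart{r_{2,3}}$ holds there. On the imaginary axis this argument breaks down, because $\realpart{r_2}=0$. The main obstacle is therefore the case $\realpart{\lambda}=0$, where $\lambda$ is embedded in the $L^2$ essential spectrum. My approach is to combine (iii) with an asymptotic analysis. If $D(\lambda)=0$, then $X_+$ decays at $+\infty$ like $e^{r_1\xi}$. At $-\infty$ it lies in the span of the modes with exponents $r_2,r_3$, where $\realpart{r_2}=0$ and $\realpart{r_3}>0$. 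Using $\realpart{r_1}<-\alpha$ from Corollary \ref{C:roots}, this gives an $L^2_\alpha$ eigenfunction; I then need to show it is actually in $L^2$, by excluding the $e^{r_2\xi}$ component at $-\infty$. Conversely, an $L^2$ eigenfunction $v$ must be a multiple of $X_+$ at $+\infty$, since the $r_2$ mode is merely bounded and $r_3$ grows. It then lies in $L^2_\alpha$ because it decays exponentially at $+\infty$ and is bounded by $e^{\realpart{r_3}\xi}$ at $-\infty$, so $D(\lambda)=0$ by (iii). For the delicate direction, I would first exploit the Hamiltonian symmetry $\lambda\mapsto-\bar\lambda$ together with the reflection $\xi\mapsto-\xi$ to relate the $-\infty$ behavior to the $+\infty$ behavior. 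Failing that, I would restrict the statement to the exponentially localized eigenfunction produced by the weighted argument.
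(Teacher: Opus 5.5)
Your parts (i) and (iii) follow the paper's argument, in more detail. Corollary~\ref{C:roots} gives analyticity on $\Omega_+(\alpha)$, and conjugating to $X_+e^{\alpha\xi}$, $Y_-e^{-\alpha\xi}$ leaves $X_+(0)\cdot Y_-(0)$ unchanged. There is one slip. Since $\realpart{-r_1}>0$, the adjoint solution $Y_-\sim e^{-r_1\xi}v_-$ \emph{decays} as $\xi\to-\infty$; it is unique because it is the fastest-decaying adjoint solution there, not because it grows.

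The genuine gap is in (ii). For $\lambda\in\ri\RM$ you correctly reduce the direction ``$D(\lambda)=0\Rightarrow$ $L^2(\RM)$-eigenvalue'' to showing that the coefficient $a$ of the neutral mode $e^{r_2\xi}$ in $X_+$ as $\xi\to-\infty$ vanishes. You never prove this. Your fallback, restricting to the exponentially localized eigenfunction, is not an option, because that weakened statement is just (iii). The paper needs exactly this direction. It is what turns a weighted eigenvalue on the imaginary axis into an unweighted one, so that Theorem~\ref{Th1} applies in Propositions~\ref{P:spec_equiv} and~\ref{P:strong_spec}.

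The paper closes this direction with the reflection symmetry, citing \cite[proof of Theorem 3.6]{Pego}. Concretely, let $\lambda\in\ri\RM$ and let $v_+$ be the first component of $X_+$.
\begin{itemize}
\item The map $\mathcal{R}v(\xi)=\overline{v(-\xi)}$ sends solutions of \eqref{DPLeig} at $\lambda$ to solutions at $-\bar\lambda=\lambda$, because the coefficients are real and $u_0$ is even.
\item The roots satisfy $r_3=-\bar r_1$ and $r_2=-\bar r_2$. Hence $\mathcal{R}v_+$ is the (unique up to scaling) solution decaying at $-\infty$, and $\mathcal{R}$ interchanges the two-dimensional spaces $V_\pm$ of solutions bounded at $\pm\infty$.
\item If $D(\lambda)=0$, then $v_+\in V_+\cap V_-$. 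When $\dim(V_+\cap V_-)=1$, this intersection is invariant under $\mathcal{R}$, so $\mathcal{R}v_+=\theta v_+$ with $|\theta|=1$. Thus $v_+$ decays at both ends. This is the symmetry $v(\xi)=\overline{v(-\xi)}$ the paper invokes.
\end{itemize}

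You still have to treat the degenerate case $V_+=V_-$, which the symmetry alone does not exclude. There you need an extra input. One option is a $\xi$-independent flux coming from the Hamiltonian structure: for $\lambda\in\ri\RM$ the operator $\mathcal{L}[u_0]-\lambda J^{-1}$ is formally symmetric. This flux vanishes on $v_+$, as seen by evaluating at $+\infty$. At $-\infty$ it equals $|a|^2$ times a nonzero constant, because $r_2$ is a simple nonzero root: by \eqref{firstdisp}, $\lambda=\ri\omega(\sigma)$ with $\omega'(\sigma)\geq c-4k>0$. Since the flux argument forces $a=0$ in either case, it can replace the symmetry argument entirely.
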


\begin{proof}
As mentioned above, the Evans function $D(\lambda)$ will be well-defined and complex analytic in any region of the complex plane 
where $r_1(\lambda)$ continues to remain the unique eigenvalue of $A^\infty(\lambda)$ with smallest real part.  Part (i) 
thus follows immediately by Corollary \ref{C:roots}.  By the same reasoning, the Evans function $D_\alpha(\lambda)$
associated to $\mathcal{A}_\alpha[u_0]$ is well-defined and complex analytic on $\Omega_+(\alpha)$.  Part (ii)
follows from the fact that since the coefficients of $\mathcal{A}[u_0]$ are real that eigenfunctions 
of $\mathcal{A}[u_0]$ associated with purely imaginary eigenvalues necessarily satisfy
the symmetry $v(\xi) = \overline{v(-\xi)}$.  For details, see \cite[proof of Theorem 3.6]{Pego}.
It remains to show that $D_\alpha(\lambda)$ agrees with $D(\lambda)$ on $\Omega_+(\alpha)$.

To this end, note that for $\lambda\in\Omega_+(\alpha)$ we can construct the Evans function associated to $\mathcal{A}_\alpha[u_0]$ 
by performing the substitution
\[
X(\xi)=\tilde{X}(\xi)e^{\alpha\xi}
\]
in \eqref{linear}, which leads to the system
\begin{equation}
\tilde{X}'=\left(A(\xi,\lambda)-\alpha\mathbb{I}_3\right)\tilde{X}.
\label{linearalpha}
\end{equation}
The system \eqref{linearalpha} is recognized exactly as \eqref{DPLeigessW} written as a first-order system.
Following the construction of the Evans function from the beginning of this section, we note that the Evans function $D_\alpha(\lambda)$
associated to \eqref{linearalpha} is obtained by defining 
\[
\tilde{X}_+(\xi)=X_+(\xi)e^{\alpha\xi},~~~\tilde{Y}_-(\xi)=Y_-(\xi)e^{-\alpha\xi}
\]
where $X_+(\xi)$ and  $Y_-(\xi)$ are the solutions of the unweighted linear system \eqref{linear} and its  adjoint system \eqref{adjoint} defined in 
\eqref{XP} and \eqref{YM}, respectively.  The Evans function $D_\alpha(\lambda)$ is then given by
\begin{equation}\label{Evans}
  D_\alpha(\lambda)=\tilde{X}_+(0)\cdot \tilde{Y}_-(0)=X_+(0)\cdot Y_-(0)=D(\lambda),
\end{equation}
establishing the desired equivalence between the Evans function for \eqref{linear} and the Evans function for \eqref{linearalpha}.
\end{proof}

\begin{remark}
Similar to Remark \ref{R:equiv}, it is important to note that roots of $D(\lambda)$ with negative real part are not necessarily
$L^2(\RM)$ eigenvalues of $\mathcal{A}[u_0]$.  By the above result, such roots in $\Omega_+(\alpha)$ are, however, $L^2(\RM)$ eigenvalues
of $\mathcal{A}_\alpha[u_0]$.
\end{remark}

As mentioned at the beginning of this section, Proposition \ref{P:spec_equiv} is now a direct consequence of Lemma \ref{DA} above.  Consequently,
to study the purely imaginary $L^2(\RM)$ eigenvalues of  $\mathcal{A}_\alpha[u_0]$ it is equivalent to study the purely imaginary $L^2(\RM)$ eigenvalues
of $\mathcal{A}[u_0]$.  This analysis is carried out in Section \ref{S:Lax} and Section \ref{S:complete} below by using the complete integrability of the DP equation.

\

Before moving on, we make the following observation about the point spectrum of $\mathcal{A}[u_0]$ acting on $L^2_\alpha(\RM)$
in the case when $\alpha>1$.  Note, in particular, that the next result implies that $\mathcal{A}[u_0]$ does not have a kernel
on $L^2_\alpha(\RM)$ when $\alpha>1$.  For further discussion of this, see Remark \ref{R:triv_ker} below.

\begin{proposition}\label{P:point_bigalpha}
Let $u_0(\cdot;k,c)$ be a smooth solitary wave solution of \eqref{DP} as constructed in Lemma \ref{L1}.  For each $\alpha>1$ the linear operator
$\mathcal{A}[u_0]$ acting on $L^2_\alpha(\RM)$ has no point spectrum on the right side of its essential spectrum described in \eqref{RI}.
\end{proposition}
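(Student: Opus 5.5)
The plan is to compare $L^2_\alpha$ eigenfunctions for $\alpha>1$ with $L^2$ eigenfunctions via the Evans function, in the spirit of Lemma \ref{DA}. First I will locate the roots of $\mathcal{P}(\lambda,r-\alpha)=0$ for $\lambda$ to the right of the essential spectrum of $\mathcal{A}_\alpha[u_0]$. By Proposition \ref{P:ess_bigalpha}, this region, call it $\Omega_+(\alpha)$, is the connected component of the complement of the curve \eqref{RI} containing the closed right half plane. On it the number of roots $r_j(\lambda)-\alpha$ on either side of $-\alpha$ is constant. I will count them at a convenient point, say $\lambda$ real and large, using the asymptotics \eqref{3roots}: $r_1\approx-1$, $r_3\approx 1$, $r_2\approx\lambda/(c-k)$. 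Since $\alpha>1$, both $r_1\approx-1$ and $r_3\approx 1$ satisfy $\realpart{r}>-\alpha$. Hence for $\alpha>1$ all three roots of $\mathcal{P}(\lambda,\cdot)$ lie to the right of $-\alpha$ throughout $\Omega_+(\alpha)$. Equivalently, the system \eqref{linearalpha} has three spatial eigenvalues with positive real part at $+\infty$, and none with negative real part.

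Next I will use this to show the kernel of $\mathcal{A}_\alpha[u_0]-\lambda$ is trivial for $\lambda\in\Omega_+(\alpha)$. The coefficients of \eqref{linearalpha} converge exponentially to those of $A^\infty(\lambda)-\alpha\mathbb{I}_3$ as $\xi\to+\infty$, by Remark \ref{R:decay}. The Levinson/roughness theorem of \cite{codd} then says every nonzero solution $\tilde X$ grows like $e^{(r_j-\alpha)\xi}$ for some $j$, which is exponentially growing since $\realpart{r_j-\alpha}>0$. So no nonzero solution is in $L^2$ near $+\infty$. A weaker form suffices: an exponential dichotomy on $[0,\infty)$ with trivial stable subspace forces any bounded solution to vanish. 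Translating back, $v=e^{-\alpha\xi}w$ with $w\in L^2$ cannot solve \eqref{DPLeig} nontrivially, because $w$ is the first component of such an $\tilde X$.

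Two points need care. First, the equivalence between \eqref{DPLeigessW} and the first-order system \eqref{linearalpha} must not introduce spurious solutions. This holds because $c-u_0>0$ by Lemma \ref{L1}, so the system is nonsingular. Any $L^2$ eigenfunction $w$ of the integro-differential operator $\mathcal{A}_\alpha[u_0]$ is smooth by elliptic regularity, after applying $(1-(\partial_\xi-\alpha)^2)$, with derivatives in $L^2$. Hence $(w,w',w'')$ is an $L^2$ solution of \eqref{linearalpha}. Second, I must handle $\lambda=0$ and points of $\Omega_+(\alpha)$ near the asymptotic root crossings, and make sure no root sits exactly on $\realpart{r}=-\alpha$ inside $\Omega_+(\alpha)$. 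This follows from the characterization of the boundary of $\Omega_+(\alpha)$ as exactly the set where $\mathcal{P}(\lambda,i\sigma-\alpha)=0$.

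I expect the main obstacle to be the root-counting step. The essential spectrum curve \eqref{RI} for $\alpha>1$ could in principle wind, so that $\Omega_+(\alpha)$ is not simply the half plane to the right of $-\alpha(c-k)$. I would first verify connectedness of $\Omega_+(\alpha)$ from the inequality $\realpart{\lambda(\sigma)}<-\alpha(c-k)$ established in the proof of Proposition \ref{P:ess_bigalpha}. That inequality shows $\{\realpart{\lambda}\ge-\alpha(c-k)\}$ lies in the resolvent region of the asymptotic operator, and I would perform the count by continuation from large real $\lambda$ within that half plane.
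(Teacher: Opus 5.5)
Your proposal is correct and follows essentially the same route as the paper. Both arguments observe that the weighted spatial exponents are $r_j+\alpha$ and that their real parts keep a fixed sign on the region to the right of \eqref{RI}. By \eqref{3roots} and $\alpha>1$, all three are positive, so no nonzero solution decays at $+\infty$ and there is no $L^2$ eigenfunction. Your regularity and dichotomy remarks fill in details the paper leaves implicit.

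One small fix: under the conjugation $\partial_\xi\mapsto\partial_\xi-\alpha$ behind \eqref{DPLeigessPoly}, the exponents are $r_j+\alpha$, not $r_j-\alpha$ as you write when you invoke \eqref{linearalpha}. The paper's \eqref{linearalpha} uses the opposite sign convention to \eqref{spec_w}. With $r_j-\alpha$ your growth claim would be false, e.g.\ $r_1-\alpha\approx-1-\alpha<0$.
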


\begin{proof}
The idea of the proof is very similar to the first part of the proof of Lemma \ref{DA}.  Indeed, for a given $\lambda\in\CM$ we 
recall that the roots of the weighted characteristic
polynomial \eqref{DPLeigessPoly} are of the form $r=r_j+\alpha$, $j=1,2,3$, where the $r_j$ are the roots of the unweighted characteristic
polynomial $P(\lambda,r)$ defined in \eqref{Pdef}.  Further,  that for each $j$ that the sign of $\realpart{r_j+\alpha}$ is necessarily
constant for all $\lambda$ to the right of the essential spectrum of $\mathcal{A}_\alpha[u_0]$ described in \eqref{RI}.
Owing to the asymptotics in \eqref{3roots}, we see that for ${{\realpart{\lambda}}}>0$ with $|\lambda|$ sufficiently large that
the fact that $\alpha>1$ implies that $\realpart{r_j+\alpha}>0$ for $j=1,2,3$.  
It follows that for all such $\lambda$ that  \eqref{DPLeigessW} has three linearly independent solutions diverging
as $\xi\to\infty$, and hence it cannot\footnote{Indeed, the solutions will behave as $\xi\to\infty$ as $\xi^\ell e^{r\xi}$ for some non-zero integer $\ell$.}
have any solutions in $L^2(\RM)$, thus completing the proof.
\end{proof}

\subsection{Lax Pair \& Squared Eigenfunctions}\label{S:Lax}

In order to rule out the existence of non-zero eigenvalues of $\mathcal{A}[u_0]$ embedded in the essential spectrum,
we rely on the complete integrability of the governing DP equation.  Specifically, we use the existence
of a Lax pair.
Despite being similar in appearance to the Camassa-Holm equation \eqref{CH}, the DP equation \eqref{DP} 
has a significantly different underlying integrability structure with its Lax pair being of third order 
(three-dimensional if written as a first-order  dynamical system) and not self-adjoint.   
In the traveling wave coordinates $(\xi,t)=(x-ct,t)$, the Lax pair is given by 
\begin{equation}\label{DPLax}
\left\{\begin{aligned}
&\phi_{\xi\xi\xi} = \phi_\xi + m\sigma\phi,\,\;\;\;m = u - u_{\xi\xi}\\
&\phi_t = \frac{1}{\sigma}\phi_{\xi\xi} +(c- u)\phi_\xi + u_\xi\phi,
\end{aligned}\right.
\end{equation}
where $\sigma\in\CM\setminus\{0\}$ acts as a (Lax) spectral parameter\footnote{For further information about the differences between the CH and DP integrability structures, 
see the discussion after Equation (1.17) in Section 1 and Section 3 in \cite{Lundmark2022}} and where $u=u(x,t)$ acts
as a potential function.  For more details, see \cite{wanghone,dhh}.  Indeed, one can directly verify that if $\phi$ solves the system  \eqref{DPLax}
then the the compatibility condition $\phi_{\xi\xi\xi t}=\phi_{t\xi\xi\xi}$ implies that $m=(1-\partial_\xi^2)u$ satisfies
\[
m_t+(u-c)m_\xi+3u_\xi m=0,
\] 
which of course is equivalent to \eqref{DP} in the traveling coordinate frame.  Additionally, we will make use of the adjoint version\footnote{Note that the system \eqref{DPLaxA} 
is deemed ``adjoint'' to that given in \eqref{DPLax} as the spectral problem in $x$ corresponds to the operator adjoint to the spectral problem in $x$ in \eqref{DPLax}.}
to the Lax pair \eqref{DPLax} given by
\begin{equation}\label{DPLaxA}
\left\{\begin{aligned}
&\psi_{\xi\xi\xi} = \psi_\xi - m\sigma\psi,\,\;\;\;m = u - u_{\xi\xi}.\\
&\psi_t = -\frac{1}{\sigma}\psi_{\xi\xi} +(c- u)\psi_\xi + u_\xi\psi,
\end{aligned}\right.
\end{equation}
As with \eqref{DPLax}, a direct calculation shows that solutions of the system \eqref{DPLaxA} satisfy the compatibility condition $\psi_{\xi\xi\xi t}=\psi_{t\xi\xi\xi}$ 
if and only if $u$ solves the DP equation \eqref{DP} in the traveling wave coordinates.

It is seemingly well known that for a given completely integrable system there generally exists a connection between quadratic combinations of the eigenfunctions of the Lax pair (and its adjoint) and the eigenfunctions for the linear stability stability problem.  See, for example, the works \cite{AKNS1,AKNS2,G86,N85,S83} along 
with the work \cite{Constantin2007} for the case of the classical Camassa-Holm equation.
In the next result, we establish (seemingly for the first time) 
this so-called squared eigenfunction connection for the DP equation.

\begin{lemma}\label{L:squared_ef}
Let $u_0(\cdot;k,c)$ be a smooth solitary wave to \eqref{DP} as constructed in Lemma \ref{L1}, and suppose the functions $\phi$ and $\psi$ satisfy
the  Lax pairs \eqref{DPLax} and \eqref{DPLaxA}, respectively, considered with potential $u=u_0$.  Then the function
\begin{equation}\label{SEDP1}
v = \left(\phi\psi-\phi_x\psi_x\right)_x
\end{equation}
solves the linearized DP equation given in \eqref{DPL}, i.e. $v$ solves
\[
v_t=\mathcal{A}[u_0]v
\]
where $\mathcal{A}[u_0]$ is the linearized operator defined in \eqref{e:Adef}.
\end{lemma}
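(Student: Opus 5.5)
Since $\mathcal{A}[u_0]=J\mathcal{L}[u_0]$ with $J=\partial_\xi(1-\partial_\xi^2)^{-1}(4-\partial_\xi^2)$, we verify the equivalent differential form \eqref{DPL}:
\[
(1-\partial_\xi^2)v_t=\partial_\xi(4-\partial_\xi^2)\bigl(u_0-c+3c(4-\partial_\xi^2)^{-1}\bigr)v .
\]
Write $v=w_\xi$ with $w:=\phi\psi-\phi_\xi\psi_\xi$. Both sides are then total $\xi$-derivatives, so it suffices to prove the integrated identity
\begin{equation*}
(1-\partial_\xi^2)w_t=(4-\partial_\xi^2)\bigl((u_0-c)w_\xi\bigr)+3c\,w_\xi+\text{const}.
\end{equation*}
We will in fact show this holds with zero constant, by direct computation.

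\emph{Step 1: reduce everything to a finite jet.} By the $\xi$-equations of \eqref{DPLax} and \eqref{DPLaxA}, every derivative of order $\ge 3$ of $\phi$ or $\psi$ can be eliminated in favour of lower ones:
\[
\phi_{\xi\xi\xi}=\phi_\xi+\sigma m\phi,\qquad \psi_{\xi\xi\xi}=\psi_\xi-\sigma m\psi,\qquad m=u_0-u_0''.
\]
Consequently every quantity we encounter becomes a bilinear form in the jets $(\phi,\phi_\xi,\phi_{\xi\xi})$ and $(\psi,\psi_\xi,\psi_{\xi\xi})$, with coefficients that are polynomials in $\sigma,\sigma^{-1},u_0$ and derivatives of $u_0$. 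For instance,
\[
w_\xi=\phi_\xi\psi+\phi\psi_\xi-\phi_{\xi\xi}\psi_\xi-\phi_\xi\psi_{\xi\xi},
\]
and $w_{\xi\xi}$, $w_{\xi\xi\xi}$ are obtained by differentiating once more and substituting the third-order relations. At this stage we record the useful cancellation that the $\sigma m$ terms in $w_{\xi\xi}$ combine into $\sigma m(\phi\psi_\xi-\phi_\xi\psi)$.

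\emph{Step 2: compute the time derivatives.} The $t$-equations give $\phi_t$ and $\psi_t$ directly, and we obtain $\phi_{\xi t}=(\phi_t)_\xi$, and similarly for $\psi$, by differentiating and reducing as in Step 1. Since $u_0$ is stationary in the moving frame, no $u_t$ terms appear; this is consistent with the compatibility condition $m_t+(u_0-c)m_\xi+3u_{0,\xi}m=0$ reducing to the profile equation \eqref{e:profile1}. Substituting then gives $w_t=\phi_t\psi+\phi\psi_t-\phi_{\xi t}\psi_\xi-\phi_\xi\psi_{\xi t}$ as a jet bilinear form. The $\sigma^{-1}$ terms from the two $t$-equations carry opposite signs, and after the reduction they should cancel or collapse.

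\emph{Step 3: compare coefficients.} We expand both sides of the integrated identity as bilinear forms in the two jets and match the coefficients of each of the nine monomials $\phi_{(i)}\psi_{(j)}$, $0\le i,j\le 2$, separately for each power of $\sigma$. We use $m=u_0-u_0''$ freely, and the profile equation \eqref{e:profile1} wherever $m'$ appears.

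We expect Step 3 to be the main obstacle. It is a sizeable but mechanical computation, and the delicate point is the bookkeeping of the $\sigma$ and $\sigma^{-1}$ terms: these must cancel exactly, since the right-hand side is independent of $\sigma$. If the integrated identity fails only by a $\xi$-independent quantity, this is harmless because we differentiate at the end. If it fails by something genuinely $\xi$-dependent, we will instead verify the non-integrated identity for $v$ directly, in the same way.
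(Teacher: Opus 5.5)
\textbf{This is a genuine gap: the identity you propose to verify is false, so Step 3 cannot close.} Your method is the same direct calculation the paper relies on: eliminate third derivatives using the $\xi$-equations of \eqref{DPLax} and \eqref{DPLaxA}, then compare jet coefficients. The failure is in the starting form, which is not equivalent to \eqref{DPL}. Expanding, $\partial_\xi(4-\partial_\xi^2)\bigl((u_0-c)v\bigr)+3c\,v_\xi$ equals \emph{minus} the right-hand side of \eqref{DPL}. The same sign slip appears in the paper's rewriting of \eqref{DPLeig} just before \eqref{e:Adef}. The true linearization is $v_t=-J\mathcal{L}[u_0]v$, which is also what \eqref{e:ker_formal} requires.

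The squared eigenfunction satisfies \eqref{DPL} itself. Set $P=\phi\psi$, $S=(\phi_\xi\psi_\xi)_\xi$ and $A=\phi_{\xi\xi}\psi-\phi\psi_{\xi\xi}$. Then $w_\xi=P_\xi-S$ and
\[
w_t=\sigma^{-1}A+(c-u_0)w_\xi-u_0P_\xi+2u_0'P .
\]
The Lax equations give $(1-\partial_\xi^2)A=-3\sigma mP_\xi-2\sigma m'P$ and $P_{\xi\xi\xi}=P_\xi+3S$. Consequently $(1-\partial_\xi^2)(-u_0P_\xi+2u_0'P)=3u_0S-3u_0''P_\xi+2m'P$. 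Adding these,
\[
(1-\partial_\xi^2)\bigl(w_t-(c-u_0)w_\xi\bigr)=-3(m+u_0'')P_\xi+3u_0S=-3u_0w_\xi ,
\]
which is equivalent to
\[
(1-\partial_\xi^2)w_t=(4-\partial_\xi^2)\bigl((c-u_0)w_\xi\bigr)-3c\,w_\xi .
\]
This holds with zero constant and without using the profile equation, and one $\xi$-derivative gives \eqref{DPL}.

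Your integrated identity is the negative of this, up to a constant. Together the two would force $(1-\partial_\xi^2)w_t$ to be constant in $\xi$. That is false: its $O(\sigma)$ part contains $-3\sigma(c-u_0)m\,A$. Your fallback of checking the non-integrated identity fails for the same reason, since it carries the same sign. The repair is to target \eqref{DPL} with the sign above.

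Two smaller slips:
\begin{itemize}
\item The $\sigma m$ term in $w_{\xi\xi}$ is $\sigma m(\phi_\xi\psi-\phi\psi_\xi)$, not its negative.
\item After reduction the right-hand side is not $\sigma$-independent. Both $w_{\xi\xi}$ and $w_{\xi\xi\xi}=w_\xi+3\sigma mA+\sigma m'(\phi_\xi\psi-\phi\psi_\xi)$ carry $\sigma$. So the $\sigma$-bookkeeping is a matching between the two sides, not a cancellation on the left. The term $\sigma^{-1}A$ survives in $w_t$ and becomes $O(1)$ only after applying $1-\partial_\xi^2$.
\end{itemize}
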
 

\begin{proof}
While this can be verified by direct calculation, we provide some details regarding how we found this result.
Inspired by work such as \cite{Kaup2010,Gorder2010,kaup2009inverse} dealing with squared eigenfunctions in the cases of 3 and $N$ dimensional Lax pairs, we
systematically searched for a quadratic combination of solutions to the Lax pair \eqref{DPLax} and its adjoint version.
More precisely, we inserted in the linearization \eqref{DPL}  a general linear combination of all the possible terms of the form $\phi$, $\phi_x$, or $\phi_{xx}$ multiplied by  $\psi$, $\psi_x$, or $\psi_{xx}$, where $\phi$ and $\psi$  are solutions of, respectively,  the Lax pair \eqref{DPLax} and its adjoint \eqref{DPLaxA}. The nine constants of the ansatz were determined (up to a remaining free constant) by requiring the linear combination to be a solution of the linearization.
Performing this calculation produces the quadratic combination in \eqref{SEDP1}, as desired.
\end{proof}

\begin{remark}
For many completely integrable PDEs, one can establish the so-called completeness of the squared eigenfunction connection.  Effectively, such a result
would claim that all spectral information about $\mathcal{A}[u_0]$ on $L^2(\RM)$, including genuine $L^2(\RM)$-eigenfunctions and representations for the bounded
solutions in the essential spectrum, can all be determined directly from quadratic combinations of the eigenfunctions associated to the Lax pair and its adjoint.
See, for example, {{\cite{Constantin2007} for the case of the Camassa-Holm equation}}.  Such a result, however, is not currently known for the DP equation.  If it were known for the DP equation, it would imply
almost immediately the absence of non-zero $L^2(\RM)$ eigenvalues of $\mathcal{A}[u_0]$.  In the absence of such a result, here we establish sort of asymptotic
completeness of the squared eigenfunctions when the (stability) spectral parameter $\lambda$ belongs to the imaginary axis.  
\end{remark}

Next, we relate the squared eigenfunction connection \eqref{SEDP1} to the spectral stability problem \eqref{DPLeig}.
To this end, we first note that the adjoint Lax pair \eqref{DPLaxA} can be obtained from the Lax pair \eqref{DPLax} 
by the change of variables $(\xi,t)\to(-\xi,-t)$.  Consequently, the squared eigenfunction connection \eqref{SEDP1} can be reformulated as
\begin{equation}\label{SEDP}
v(\xi,t) = \left(\phi_1(\xi,t)\phi_2(-\xi,-t) + \phi_{1\xi}(\xi,t)\phi_{2\xi}(-\xi,-t)\right)_\xi.
\end{equation}
where the $\{\phi_i\}_{i=1}^2$ are any two solutions of \eqref{DPLax}.  It is thus sufficient
to only study solutions of the Lax pair \eqref{DPLax}.
Inspired by \cite[Section 5]{D2}, we note that the time-evolution equation \eqref{DPLax}(ii) is an autonomous differential equation in 
time and hence admits solutions of the form
\[
\phi(\xi,t) = f(\xi)e^{rt},
\]
where the function $f$ satisfies the ODE system
\eq{
f''' = f' + \sigma m f,\;\;\;rf = \frac{1}{\sigma}f'' +(c- u_0)f' + u_0'f,\,\;\;\;m = u_0 - u_{0}''.
}{DPLaxTW}
Given that the traveling wave $u_0$ is not known explicitly\footnote{This is unlike {{the}} KdV equation, for instance, where explicit solitary and periodic
solutions are known.  Such cases allow for the possibility of providing explicit solution representations for the resulting Lax system.  
See, for example, \cite{D2}.} for the DP equation, we are not able to solve the ODE system \eqref{DPLaxTW} explicitly.  
However, noting that  $u_0$ decays exponentially fast to its asymptotic end state as $\xi\to\pm\infty$, it follows from standard linear ODE
theory (see for example \cite[p.~104]{codd}) that there exists solutions to the system \eqref{DPLaxTW} that behave asymptotically
as solutions of the free (i.e. asymptotically constant) system
\eq{
\tilde{f}''' = \tilde{f}' + \sigma k \tilde{f},\;\;\;r\tilde{f} = \frac{1}{\sigma}\tilde{f}'' +(c- k)\tilde{f}'{{,}}
}{DPLaxTW2}
which, in turn, has solutions of the form $\tilde{f}(\xi)=e^{l\xi}$ with $l$ satisfying
\begin{equation}\label{lpo}
l^3-l-k\sigma=0
\end{equation}
and the temporal growth rate $r$ in turn being given by
\eq{
r=l(l/\sigma+c-k).
}{rval}
Using the squared eigenfunction connection in \eqref{SEDP}, it follows that  two distinct roots $l_1$ and $l_2$ of \eqref{lpo} would produce
a solution of the linearized evolution equation \eqref{DPL} of the form
\[
v(\xi,t)= e^{(r_1-r_2)t}\left(f_1(\xi)f_2(-\xi)+f_1'(\xi)f_2'(-\xi)\right)',
\]
where the $r_j$ are determined from the $l_j$ via \eqref{rval}.
By the above considerations, it follows from Lemma \ref{L:squared_ef}
that as long as $l_1\neq l_2$ and $l_1l_2\neq -1$ then the ODE \eqref{DPLeig} will admit a solution that behaves asymptotocally 
as\footnote{Note that when $l_1=l_2$, corresponding to a repeated root of \eqref{lpo}, or $l_1l_2=-1$ then \eqref{AsBe} no longer depicts the leading order asymptotics for $\xi\to\infty$.  However, these conditions only occur when $\lambda=0$, which is outside the scope of our argument.  See Remark \ref{R:formal} for more details.}
\begin{equation}\label{AsBe}
{ v(\xi)\sim (l_1-l_2)(1+l_1l_2)e^{(l_1-l_2)\xi},~~\xi\gg 1}
\end{equation}
with the (stability) spectral parameter $\lambda$ given as
\begin{equation}\label{lambda}
\lambda=r_1-r_2=(l_1-l_2)\left((l_1+l_2)/\sigma+c-k\right).
\end{equation}

The above demonstrates a mapping from solutions of the Lax pair \eqref{DPLax} to solutions of the spectral stability problem \eqref{DPLeig} or, equivalently
to solutions of the ODE $\mathcal{A}[u_0]v=\lambda v$.  Specifically, for a given Lax eigenvalue $\sigma$ one finds three roots $\{l_j\}$ 
of \eqref{lpo}, and hence three temporal growth rates $\{r_j\}$ which, in turn, generates (a-priori) six values\footnote{At least, so long
as no two of the roots satisfy $l_1=l_2$ or $l_1l_2=-1$.} of $\lambda$ through
permutations of the $\{r_j\}$ in \eqref{lambda}.  
While this mapping tells us how stability eigenvalues $\lambda$ may be generated from Lax eigenvalues $\sigma$, 
it is a priori unknown if this construction accounts for all possible eigenvalues $\lambda$.  
In the next section, we will explore which solutions of the spectral stability problem
\eqref{DPLeig} are in turn generated through this mapping, at least when $\lambda\in\RM i$.

\subsection{Absence of Non-Zero Point Spectrum}\label{S:complete}

Equipped with the above preliminaries, we can now rule out the existence of non-zero purely imaginary eigenvalues of $\mathcal{A}[u_0]$.
As a first main step, we establish the following technical result concerning the completeness of the squared eigenfunctions
for purely imaginary values of the spectral parameter $\lambda$.  For motivation, we note that the spectral problem \eqref{e:Adef} is a third-order
linear ordinary differential equation and hence, by elementary ODE theory, has exactly three linearly independent solutions for each fixed $\lambda\in\CM$.
The next result shows that for each non-zero $\lambda\in\RM i$, the squared eigenfunction connection in \eqref{SEDP}
provides a complete set of solutions to the third-order ordinary differential equation \eqref{DPLeig}.

\begin{proposition}
\label{L2}
Let $u_0(\cdot;k,c)$ be a smooth solitary wave as constructed in Lemma \ref{L1}.  For each $\lambda\in\mathbb{R}i\setminus\{0\}$ 
the squared eigenfunction connection \eqref{SEDP1} provides three linearly independent solutions of the third-order ordinary differential equation \eqref{DPLeig}.
 \end{proposition}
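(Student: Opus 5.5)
For fixed $\lambda=\ri\omega$ with $\omega\in\RM\setminus\{0\}$, the plan is to produce three solutions of \eqref{DPLeig} through \eqref{SEDP} and to show they are linearly independent by comparing their behavior as $\xi\to+\infty$. Solutions of \eqref{DPLeig} grow or decay like $e^{\rho\xi}$ as $\xi\to\infty$, where $\rho$ runs over the three roots of $\mathcal{P}(\ri\omega,\rho)=0$. For $\lambda\in\ri\RM\setminus\{0\}$ these roots satisfy $\realpart{r_1}<0=\realpart{r_2}<\realpart{r_3}$. If the three exponents are distinct, then three solutions whose leading asymptotics are $e^{r_j\xi}$, $j=1,2,3$, with nonzero prefactors, are automatically linearly independent; this can be seen via the Wronskian, or by the uniqueness in Levinson's theorem \cite{codd}. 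So it suffices to realize each root $r_j$ as a difference $l_1-l_2$ of roots of \eqref{lpo}, for some Lax parameter $\sigma$, with $\lambda$ given by \eqref{lambda} and the nondegeneracy conditions $l_1\neq l_2$, $1+l_1l_2\neq0$ holding. Then \eqref{AsBe} gives exactly the required asymptotics.

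\textbf{Step 1 (the algebraic inverse map).} Fix a root $\rho$ of $\mathcal{P}(\lambda,\rho)=0$ and seek $\sigma$, $l_1$, $l_2$ with $l_1-l_2=\rho$, where both $l_1$ and $l_2$ solve $l^3-l=k\sigma$, and $\lambda=\rho\left((l_1+l_2)/\sigma+c-k\right)$. Subtracting the two cubics gives
\[
l_1^2+l_1l_2+l_2^2=1 .
\]
Writing $s=l_1+l_2$, this becomes $3s^2+\rho^2=4$, which determines $s$ up to sign. Then
\[
k\sigma=l_1^3-l_1=\tfrac18\left((s+\rho)^3-4(s+\rho)\right).
\]
I would substitute into \eqref{lambda} and check that
\[
\lambda=\rho\left(\frac{s}{\sigma}+c-k\right)
\]
reduces, after using $s^2=(4-\rho^2)/3$, to the dispersion relation $\mathcal{P}(\lambda,\rho)=0$; equivalently,
\[
\frac{s}{\sigma}=-\frac{3k}{1-\rho^2}.
\]
Simplifying $k\sigma$ with $s^2=(4-\rho^2)/3$ yields $k\sigma=s(\rho^2-1)/3$, so $s/\sigma=3k/(\rho^2-1)$, which matches. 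Hence every root $\rho$ of $\mathcal{P}(\lambda,\cdot)$ with $\rho^2\neq1$ and $s\neq0$ (that is, $\rho^2\neq4$) arises from a pair of Lax roots, with $\sigma\neq0$. This is the computation I would carry out carefully first.

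\textbf{Step 2 (nondegeneracy).} I need $l_1\neq l_2$, which holds since $\rho\neq0$ (because $\lambda\neq0$ and $\mathcal{P}(\lambda,0)=\lambda$). I also need $1+l_1l_2\neq0$. Since $l_1l_2=(s^2-\rho^2)/4=(1-\rho^2)/3$, this amounts to $\rho^2\neq4$. The condition $\rho^2\neq1$ is automatic, because $\mathcal{P}(\lambda,\pm1)=\mp3k\neq0$. For $\rho=\pm2$, $\mathcal{P}(\lambda,\pm2)=0$ forces $\lambda$ to be real (and nonzero), so this case does not occur for $\lambda\in\ri\RM\setminus\{0\}$. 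Finally, the three roots $r_j$ must be distinct; this follows from their distinct real parts noted after Lemma \ref{L0}, which hold on the whole imaginary axis away from $\lambda=0$. If needed, I would confirm this by checking that the discriminant of $\mathcal{P}(\ri\omega,\cdot)$ is nonzero.

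\textbf{Step 3 (analytic justification).} For each $\sigma$ produced in Step 1, the ODE theory in \cite[p.~104]{codd} gives genuine solutions $f_j$ of \eqref{DPLaxTW} asymptotic to $e^{l_j\xi}$ as $\xi\to+\infty$. For the reflected factor $f_2(-\xi)$ I would instead use a solution asymptotic to $e^{l_2\xi}$ as $\xi\to-\infty$, since $u_0$ decays exponentially at both ends. The asymptotic system is the same at both ends, by evenness of $u_0$. Feeding these into \eqref{SEDP} yields \eqref{AsBe}.

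I expect the main obstacle to be this last point: making sure the Levinson-type asymptotics are legitimate even when several roots $l$ share the same real part. Some $l_j$ may be purely imaginary here, so a dichotomy-free version (or an $L^1$ perturbation argument using the exponential decay of $u_0-k$) is needed. I would also need to check that the time-dependence constraint $rf=\dots$ in \eqref{DPLaxTW} is compatible with the spatial solution chosen, since the solution $\phi=fe^{rt}$ must satisfy both Lax equations.
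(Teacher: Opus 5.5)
Your proposal is correct and is essentially the paper's argument: the paper also reduces to producing three distinct admissible values of $M=l_1-l_2$, derives $3P^2=4-M^2$ by subtracting the two cubics, finds that $M$ must solve \eqref{sys2} (which is exactly the characteristic polynomial of \eqref{DPLeig} at infinity, i.e.\ your dispersion relation), gets distinctness from a discriminant, and excludes $l_1l_2=-1$ via $M=\pm 2$. You run the same algebra in the existence direction (from each characteristic root $\rho$ back to $s$, $\sigma$, $l_1$, $l_2$), which makes explicit the converse that the paper's necessity argument leaves implicit, and you use the distinct real parts of the roots instead of the discriminant; the paper also does not address the analytic points you flag in Step 3.
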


\begin{proof}
By the above considerations, it suffices to show that given any non-zero 
$\lambda\in\RM i$ there {{exist}} three distinct values for $l_1-l_2$, such that $l_1-l_2\neq 0$ and $l_1l_2\neq -1$,
when the $l_j$ are solutions of \eqref{lpo} with constraint \eqref{lambda}.  If we succeed in this, the three corresponding solutions satisfying \eqref{AsBe}
will necessarily be linearly independent\footnote{In other words, and roughly speaking, the strategy is to find three solutions
and then check their linear independence at $\xi=\infty$.} due to their respective behaviors as $\xi\to\infty$.

To this end, we set $M=l_1-l_2$ and $P=l_1+l_2$ and consider the nonlinear system
\begin{equation}\label{sys}
\left\{\begin{aligned}
& \lambda=M\left(P/\sigma+c-k\right),\\
  &l_i^3-l_i-k\sigma=0, i=1,2,\\
\end{aligned}\right.
\end{equation}
in the variables $M$, $P$, and $\sigma$ where here $k$ and $\lambda$ are treated as parameters and, clearly,
\begin{equation}\label{l_equations}
l_1=\frac{P+M}{2},~~l_2=\frac{P-M}{2}{{.}}
\end{equation}
Below, we show that for any value of $\lambda$ on the imaginary axis and $k\in (0,c/4)$, as specified by Lemma \ref{L1}, the solutions of the
system \eqref{sys} for the variables $P$, $M$, and $\sigma$ include three distinct values of $M$ such that $M\neq 1$. Hence, 
there are  three different behaviors in the limit $\xi\to \infty$ as given in \eqref{AsBe} for the differential equation \eqref{DPLeig}, 
corresponding to three linearly independent solutions. 

First, suppose that $\lambda\in\RM i\setminus\{0\}$ and note from \eqref{sys}(i) that this necessarily implies that $M\neq 0$ in this case.
We first claim that $\lambda\neq M(c-k)$, and hence \eqref{sys}(i) can be solved\footnote{Recall {{from}} \eqref{DPLax} that $\sigma$ is necessarily
non-zero.} for $\sigma$ giving
\eq{
  \sigma=\frac{MP}{\lambda+M(k-c)}=\frac{l_1^2-l_2^2}{\lambda+(l_1-l_2)(k-c)}.
  }{littles}  
To verify the claim, note that if  $\lambda=M(c-k)$ then $P=l_1+l_2=0$
by {{the first equation in \eqref{sys}}}.  Since the three roots of \eqref{lpo} necessarily satisfy
\[
l_1+l_2+l_3=0,
\]
it follows that the third root of \eqref{lpo} is necessarily $l_3=0$.  This, however, is not possible since $k\sigma\neq 0$ and hence we must
have $\lambda\neq M(c-k)$.  In particular, since $\sigma\neq 0$ we see from \eqref{littles} that the assumption that $\lambda\neq 0$ implies
that $M\neq 0$ and that $P\neq 0${{.}}

Continuing, with the assumption that $\lambda\in\RM i\setminus\{0\}$, 
we now substitute \eqref{littles} and \eqref{l_equations} into the cubic polynomial \eqref{lpo}, resulting in the two equations
\begin{equation}\label{2eqns}
\left(\frac{P\pm M}{2}\right)^3-\left(\frac{P\pm M}{2}\right)-k\left(\frac{MP}{\lambda+M(k-c)}\right)=0.
\end{equation}
Subtracting these two equations from each other  yields
\[
M\left(\lambda+M(k-c)\right)\left(3P^2+M^2-4\right)=0
\]
and hence, since $\lambda\neq M(c-k)$ and $M\neq 0$ by assumption, it follows that $P$ satisfies
\begin{equation}\label{PF}
3P^2=4-M^2.
\end{equation}
Similarly, adding the two equations in \eqref{2eqns}, using \eqref{PF} to eliminate $P$ from the resulting equation, and recalling that $P\neq 0$ gives
that $M$ satisfies polynomial equation
 \begin{equation}\label{sys2}
(c-k)M^3 - \lambda M^2 + (4k-c)M + \lambda=0.
\end{equation}
The discriminant for the cubic polynomial in  \eqref{sys2} is readily seen to be given by
\begin{equation}\label{Dis}
\Delta=4 \lambda^{4}+\left(61k^2-8 c^{2}-44 c k\right) \lambda^{2}+4 \left(c -k \right) \left(c -4 k \right)^{3},
\end{equation}
which is strictly positive for $\lambda\in\RM i\setminus\{0\}$ since, from Lemma \ref{L1}, we have $k\in(0,c/4)$ and hence
\[
61k^2-8c^2-44ck<61k^2-23k^2-192k^2<0.
\]
It follows for $\lambda\in\CM\setminus\{0\}$ that  the polynomial \eqref{sys2} has three distinct non-zero
solutions for $M=l_1-l_2$.  Finally, we note that the condition $l_1l_2\neq -1$ necessarily holds since otherwise we would have  
\[
P^2-M^2=-4{{,}}
\]
which, combined with \eqref{PF} yields $M=\pm 2$, which is not a solution of \eqref{sys2} for $\lambda\in\RM i\setminus\{0\}$.  
This completes the proof.
\end{proof}

 From Lemma \ref{L2}, the following result follows immediately.

\begin{corollary}
\label{C1}
Any $L^2(\RM)$-eigenfunction for the problem \eqref{DPLeig} corresponding to a non-zero eigenvalue $\lambda$ on the imaginary axis 
corresponds to $L^2(\RM)$-eigenfunctions of the Lax pair \eqref{DPLax} and the adjoint Lax pair \eqref{DPLaxA} through the relation \eqref{SEDP1}.
\end{corollary}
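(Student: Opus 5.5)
The plan is to use Proposition \ref{L2} to express any putative eigenfunction in the squared-eigenfunction basis, and then read off the decay of the Lax solutions from the asymptotics \eqref{AsBe}. Fix $\lambda\in\RM i\setminus\{0\}$ and suppose $v\in L^2(\RM)$ solves \eqref{DPLeig}.

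The first step is the expansion. By Proposition \ref{L2}, the three solutions $v_j$, $j=1,2,3$, produced by \eqref{SEDP1} span the full three-dimensional solution space of the third-order ODE \eqref{DPLeig}. Hence $v=\sum_j c_jv_j$ for some constants $c_j$. Each $v_j$ is built from a pair of Lax solutions $\phi_{(j)}$ of \eqref{DPLaxTW} and $\psi_{(j)}$ of the adjoint problem. These pairs are chosen so that, as $\xi\to\infty$, they behave like $e^{l_1\xi}$ and $e^{-l_2\xi}$ respectively, where $M_j=l_1-l_2$ is one of the three distinct roots of \eqref{sys2}.

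The second step is to identify which $v_j$ can appear. Since $v\in L^2(\RM)$, it must decay as $\xi\to\infty$. The leading behaviors $e^{M_j\xi}$ in \eqref{AsBe} are distinct and have nonzero prefactors, because $M_j\neq0$ and $l_1l_2\neq-1$. It follows that $c_j=0$ for every $j$ with $\realpart{M_j}\ge 0$. I would then check, using the root structure of $\mathcal{P}(\lambda,r)$ on $\lambda\in\RM i$ from Lemma \ref{L0}, that the surviving $M_j$ correspond to the unique decaying root $r_1(\lambda)$. More precisely, the exponents $M_j$ must coincide with the roots of $\mathcal{P}(\lambda,\cdot)$, since both describe the asymptotic behavior of solutions of \eqref{DPLeig}. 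Thus exactly one term survives, and $v$ is a multiple of a single squared eigenfunction $v_j=(\phi\psi-\phi_\xi\psi_\xi)_\xi$ with $\realpart{l_1-l_2}<0$.

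The third step is the analogous argument at $\xi\to-\infty$. I would rerun the same completeness argument there, using the symmetry $\xi\to-\xi$ relating \eqref{DPLax} and \eqref{DPLaxA}, or equivalently \eqref{SEDP}. This should show that the surviving term decays at $-\infty$ only if the factors $\phi$ and $\psi$ are themselves the decaying Jost-type solutions at the respective ends. Combining the two ends, $\phi$ and $\psi$ decay exponentially in both directions, so they are $L^2(\RM)$ eigenfunctions of \eqref{DPLax} and \eqref{DPLaxA} with Lax eigenvalue $\sigma$ given by \eqref{littles}.

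The main obstacle is this last passage from "$v$ decays" to "$\phi$ and $\psi$ individually decay". A product can decay even when one factor grows, provided the other decays faster. To rule this out, I would compare exponents: any non-decaying component of $\phi$ paired with $\psi$ produces exponents $l_i-l_j$ that again lie among the roots of $\mathcal{P}(\lambda,\cdot)$. By the distinctness shown via the discriminant \eqref{Dis}, such a term could only be cancelled by the forbidden coefficients. If this bookkeeping fails at some end, I would fall back to stating the corollary at the level of the asymptotic identification, namely that $v$ is generated through \eqref{SEDP1}, which is what the corollary literally asserts.
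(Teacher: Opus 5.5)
\paragraph{Verdict.} Steps 1 and 2 are sound, but Step 3 has a genuine gap, and the proposed exponent bookkeeping does not close it.

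\paragraph{Steps 1 and 2.} Step 1 is all the paper itself gives: it says the corollary follows immediately from Proposition~\ref{L2}. Step 2 is a real sharpening. Decay at $+\infty$ leaves only the squared eigenfunction whose exponent $M_*$ is the root of \eqref{sys2} with $\realpart{M_*}<0$.

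\paragraph{Why Step 3 fails.} The three distinct roots of \eqref{sys2} coming from \eqref{Dis} are each attached, via \eqref{littles} and \eqref{PF}, to their own Lax parameter. Their distinctness therefore says nothing about cross terms among the roots $l_i$ of \eqref{lpo} at one fixed $\sigma$.
\begin{itemize}
\item The computation in the proof of Proposition~\ref{L2} shows that for any two roots of \eqref{lpo}, $l_i-l_j$ solves \eqref{sys2} with $\lambda$ replaced by $r_i-r_j$ (rates from \eqref{rval}). Since \eqref{sys2} is linear in $\lambda$, a cross exponent is a root at your $\lambda$ only if $r_i-r_j=\lambda$. This contradicts your claim that such exponents ``again lie among the roots''.
\item Rerunning the construction at $-\infty$ selects the root with positive real part. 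That gives a squared eigenfunction at a generally different $\sigma$, and nothing in your argument identifies the two representations.
\item Your fallback keeps only ``$v$ has the form \eqref{SEDP1}'' and drops the $L^2(\RM)$ requirement on the Lax functions. That requirement is exactly what Theorem~\ref{Th1} uses, so the fallback is weaker than the statement.
\end{itemize}

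\paragraph{The missing idea.} The factors in \eqref{SEDP} are joint solutions of both halves of \eqref{DPLax}.
\begin{itemize}
\item Because $u_0$ is stationary, $T_\sigma=\sigma^{-1}\partial_\xi^2+(c-u_0)\partial_\xi+u_0'$ maps the solution space of $f'''=f'+\sigma mf$ into itself. The obstruction is $\sigma\left[(c-u_0)m'-3u_0'm\right]f$, which vanishes by \eqref{e:profile1}.
\item $T_\sigma$ preserves the exponential-growth filtration of that space at each end. It acts on the graded pieces by the rates of \eqref{rval}, and the $f$ in \eqref{DPLaxTW} are its eigenvectors.
\item When the three rates are distinct, an eigenvector therefore has the same leading exponent $l_j$, with nonzero coefficient, at both $+\infty$ and $-\infty$. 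It can never decay at both ends, so the target of Step 3 is unreachable.
\item For the surviving term the rates are distinct. Indeed $r_i=r_j$ forces $l_i-l_j\in\{0,\pm\sqrt{(c-4k)/(c-k)}\}$. By \eqref{PF}, or $3l^2=1$ at a double root, this makes all roots of \eqref{lpo} real. But \eqref{sys2} has no real roots for $\lambda\in\RM i\setminus\{0\}$.
\item So that squared eigenfunction equals a nonzero multiple of $e^{M_*\xi}(1+o(1))$ at both ends (recall $1+l_1l_2\neq 0$), and it blows up as $\xi\to-\infty$.
\end{itemize}
Steps 1--2 plus this observation show that no $L^2(\RM)$ eigenfunction exists for such $\lambda$. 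The corollary then holds vacuously; your argument does not produce $L^2$ Lax eigenfunctions. This route also gives the imaginary-axis part of Theorem~\ref{Th1} directly, with no input on the Lax point spectrum.
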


 With Corollary \ref{C1} in hand, we can now establish our main result for this section.  
As mentioned in the Introduction, it is known from previous works  \cite{LP22smooth,Li2019,Li2024} that the smooth solitary waves are spectrally stable
in $L^2(\RM)$ in the sense that the linearized operator $\mathcal{A}[u_0]$ in \eqref{e:Adef} has no $L^2(\RM)$-spectrum in the open right side 
of the complex plane $(\realpart{\lambda}>0)$.  Since the $L^2(\RM)$-spectrum is symmetric about the imaginary axis (see the discussion directly
above Theorem \ref{T:spec_stab}), it follows that any $L^2(\RM)$-point spectrum of $\mathcal{A}[u_0]$ must necessarily be located on the imaginary 
axis.  Using Corollary \ref{C1} along with the known characterization of the $L^2(\RM)$ point spectrum for the Lax pair \eqref{DPLax},
we now establish that $\mathcal{A}[u_0]$ in fact has no non-zero purely imaginary eigenavlues when acting on $L^2(\RM)$.

 \begin{theorem}\label{Th1}
 Let $u_0(\cdot;k,c)$ be a smooth solitary wave of the DP equation \eqref{DP}, as constructed in Lemma \ref{L1}.
The point spectrum of the operator $\mathcal{A}[u_0]$ acting on $L^2(\RM)$ contains only the origin, that is,
\[
\sigma_p\left(\mathcal{A}[u_0];L^2(\RM)\right)=\{0\}.
\]
\end{theorem}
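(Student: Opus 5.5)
The proof will proceed in three steps. First, reduce to the imaginary axis. By Theorem \ref{T:spec_stab} there is no $L^2(\RM)$-spectrum with nonzero real part, so any eigenvalue is purely imaginary. The kernel is nontrivial, since $\mathcal{A}[u_0]u_0'=0$ by translation invariance and $u_0'\in L^2(\RM)$ by Remark \ref{R:decay}, so $0$ lies in the point spectrum. It therefore remains to exclude eigenvalues $\lambda\in\RM i\setminus\{0\}$.

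Second, represent a putative eigenfunction through squared eigenfunctions. Fix such a $\lambda$ and suppose $v\in L^2(\RM)$ solves \eqref{DPLeig}. By Proposition \ref{L2}, the squared-eigenfunction construction \eqref{SEDP1} gives a basis of three solutions $v_1,v_2,v_3$ of \eqref{DPLeig}. Each has leading asymptotics \eqref{AsBe}, $v_j\sim(l_1-l_2)(1+l_1l_2)e^{M_j\xi}$, where the $M_j$ are the three distinct roots of \eqref{sys2}. Write $v=\sum_j a_jv_j$. Decay of $v$ as $\xi\to+\infty$ forces $a_j=0$ whenever $\realpart{M_j}\ge 0$. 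From Section \ref{S:ess_spec}, for $\lambda\in\RM i$ exactly one of the three exponents has negative real part and one is purely imaginary. Hence $v$ is proportional to the single $v_j$ built from a pair $(l_1,l_2)$ of roots of \eqref{lpo}, with Lax parameter $\sigma$ given by \eqref{littles}, for which $\realpart{l_1-l_2}<0$. Symmetrically, via the $\xi\mapsto-\xi$ form \eqref{SEDP}, the same solution must decay at $-\infty$. Tracking which Lax solutions $f_1(\xi)$ and $f_2(-\xi)$ produce this decaying combination, I will argue, as in Corollary \ref{C1}, that an $L^2$ eigenfunction $v$ forces the underlying $f_1$ and $f_2$ to be $L^2(\RM)$-eigenfunctions of \eqref{DPLax} and \eqref{DPLaxA} with the same $\sigma$. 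Concretely, $f_1$ must be the solution decaying at $+\infty$ and also match a decaying mode at $-\infty$, so it is a bound state.

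Third, exclude bound states of the Lax operator at the relevant $\sigma$. Since $\lambda\neq0$ is imaginary, $\sigma$ from \eqref{littles} is nonzero, and I expect to show it is not real. Then $f_1$ would be an $L^2$ solution of $f'''=f'+\sigma mf$ with $m>0$ by Lemma \ref{L1}. I will rule this out with an energy identity: multiply by $\bar f$ (or pair with the adjoint solution $\psi$) and integrate, obtaining $\int|f'|^2$-type terms against $\sigma\int m|f|^2$. Imaginary parts then force $\impart{\sigma}\int m|f|^2=0$, so $f\equiv0$. Alternatively I will use the known characterization that the discrete Lax spectrum for positive $m$ on a constant background is real, so it misses these $\sigma$.

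The main obstacle is step two: making rigorous that $L^2$-decay of $v=(f_1(\xi)f_2(-\xi)+f_1'f_2')'$ at both ends forces the individual Lax factors to decay at both ends. A decaying product does not automatically give decaying factors, so this needs careful bookkeeping of the three roots $l_j$ and their real parts at both infinities, together with the nondegeneracy conditions $M\neq0$ and $l_1l_2\neq-1$.
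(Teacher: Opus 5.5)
Your Steps 1 and 2 mirror the paper: reduce to $i\RM$ by Theorem \ref{T:spec_stab}, then use Proposition \ref{L2} and Corollary \ref{C1}. The paper finishes by citing the Lax point spectrum of the one-soliton and counting: the two real Lax eigenvalues yield only one $L^2$ solution of $\mathcal{A}[u_0]$. You replace this with an energy identity, and that replacement is wrong as written.

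For a third-order problem there is no $\int|f'|^2$ term. Since $\partial_\xi^3-\partial_\xi$ is skew-adjoint, $\int(f'''-f')\bar f\,d\xi\in i\RM$. Pairing $f'''=f'+\sigma mf$ with $\bar f$ therefore gives $\realpart{\sigma}\int m|f|^2\,d\xi=0$, not $\impart{\sigma}\int m|f|^2\,d\xi=0$. The identity rules out $L^2$ Lax solutions whenever $\realpart{\sigma}\neq0$ (in particular for every real $\sigma\neq0$) and says nothing on $i\RM$. So showing that $\sigma$ is not real is the wrong target.

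What you actually need is that the $\sigma$ attached to $M_1$ by \eqref{littles} is not purely imaginary. This is true, but it must be proved. Suppose $\sigma=is$. Then the roots of \eqref{lpo} are invariant under $l\mapsto-\bar l$, giving one root $i\kappa$ and a pair with $l_-=-\bar l_+$. Also $r(-\bar l)=-\overline{r(l)}$, so $\lambda=r_p-r_q\in i\RM\setminus\{0\}$ forces $\realpart{r(l_+)}=0$. Together with $l_++l_-+i\kappa=0$ this yields $s^2(c-k)^3=-c$, which is impossible. Note also that the corrected identity shows \eqref{DPLax} with $m>0$ has no $L^2$ eigenvalues off $i\RM$. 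Hence your fallback (``the discrete Lax spectrum is real, hence misses $\sigma$'') cannot refer to $L^2$ eigenfunctions of \eqref{DPLax} in this normalization.

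The concrete mechanism you give for Step 2 also fails, and not for bookkeeping reasons. Decay of $v\sim(l_1-l_2)(1+l_1l_2)e^{(l_1-l_2)\xi}$ at $+\infty$ constrains only $\realpart{l_1-l_2}$. At $\lambda=0$ one has $M_1=-a$ with $a=\sqrt{(c-4k)/(c-k)}<1$, and \eqref{PF} gives $|P|=\sqrt{c/(c-k)}>1>a$. So $l_{1,2}=(P\mp a)/2$ both have the sign of $P$, and by continuity this persists for $\lambda\in i\RM$ near $0$. Hence one of the two Lax factors always grows at the end where $v$ decays; at $\lambda=0$ this is exactly how the decaying mode $u_0'$ arises. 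No bound state can be read off from the decay of $v$, so you would simply be invoking Corollary \ref{C1}, as the paper does.

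A self-contained finish is to follow $v_1$ itself to $-\infty$:
\begin{itemize}
\item For the stationary wave, $T=\sigma^{-1}\partial_\xi^2+(c-u_0)\partial_\xi+u_0'$ preserves the solution space of $f'''=f'+\sigma mf$. The separated solutions used in \eqref{SEDP} are its eigenvectors.
\item If $r_1,r_2,r_3$ are distinct, each eigenvector is asymptotic to a nonzero multiple of $e^{l_j\xi}$ at both ends.
\item They are distinct for $\lambda\in i\RM\setminus\{0\}$. Indeed, $r_i=r_j$ with $l_i\neq l_j$ forces the third root to equal $(c-k)\sigma$, hence $\sigma^2=c/(c-k)^3$. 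A double root likewise forces $\sigma$ real. In both cases all roots are real and every generated $\lambda$ is real.
\item Then $v_1\sim C_\pm e^{M_1\xi}$ as $\xi\to\pm\infty$ with $C_\pm\neq0$, using $M_1\neq0$ and $l_1l_2\neq-1$. So the unique solution of \eqref{DPLeig} decaying at $+\infty$ blows up at $-\infty$.
\end{itemize}
This makes Remark \ref{R:formal} rigorous and needs neither Lax bound states nor an energy identity.
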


\begin{proof}
 As a result of spectral stability and of Lemma \ref{L2} and the symmetry of the $L^2(\RM)$-spectrum of $\mathcal{A}[u_0]$ about the imaginary axis,
we have that the point spectrum of the eigenvalue problem \eqref{DPLeig} considered on $L^2(\RM)$ can only be located on the imaginary axis.
Further, by Corollary \ref{C1} we know that any $L^2(\RM)$-eigenfunction can be realized through formula \eqref{SEDP1} from a Lax pair eigenfunction. 
From the work in \cite[Section 5]{Constantin2010} and \cite[Section 3.1]{ConstantinIvanov2016}, 
it is known that the DP one-soliton solution $u_0(\cdot;k,c)$ has a Lax point spectrum on $L^2(\RM)$ consisting 
exactly of two real eigenvalues, each the negative of each other.  Additionally, we note that the $L^2(\RM)$-spectrum for the Lax pair \eqref{DPLax}
is symmetric about the origin: if $\sigma\in\CM\setminus\{0\}$ is an eigenvalue with eigenfunction $\phi(x,t)$ then $-\sigma$ is also an eigenvalue
with eigenfunction $\phi(-x,-t)$.  It follows from \eqref{SEDP} that the two (necessarily real) $L^2(\RM)$-eigenvalues of the Lax pair \eqref{DPLax} generate 
only one linearly independent $L^2(\RM)$-solution of the operator $\mathcal{A}[u_0]$.
Since, by Lemma \ref{L2}, all solutions to \eqref{DPLeig} for $\lambda$ on the imaginary axis are realized by formula \eqref{SEDP1}, and because of Corollary \ref{C1}, 
it follows that the total eigenspace for $\mathcal{A}[u_0]$ corresponding to $\lambda\in\RM i$ can be at most one-dimensional.  Since the spectrum
of $\mathcal{A}[u_0]$ is clearly symmetric about the imaginary axis, it follows that $\lambda=0$ can be the only $L^2(\RM)$-eigenvalue
of $\mathcal{A}[u_0]$, as claimed.
 \end{proof}

\begin{remark}\label{R:formal}
Note that the result of {{Theorem}} \ref{Th1} can formally be inferred from the asymptotic relation \eqref{AsBe}.  Indeed, when \eqref{lpo} has 
two roots satisfying $l_1=l_2$ or $l_1l_2=-1$ then the right hand side of \eqref{AsBe} vanishes, formally suggesting\footnote{Of course, this is not at all
rigorous since one would need to study higher order asymptotics in this case.} that the associated
solution vanishes at infinity and hence may be in $L^2(\RM)$.  Note that if $l_1=l_2$ then {{the first equation in \eqref{sys}}} immediately implies that $\lambda=0$.
Similarly, if $l_1l_1=-1$ then, as above, one must have $M=\pm 2$, which solves \eqref{sys2} only when $\lambda=0$.  All together, we
see for $\lambda\in\RM i$ that the right hand side of \eqref{AsBe} vanishes only when $\lambda=0$, suggesting that $\lambda=0$ can be the only
possible eigenvalues of $\mathcal{A}[u_0]$ on $L^2(\RM)$.
\end{remark}

We conclude this section by recalling that Lemma \ref{DA} implies that if $\realpart{\lambda}\geq 0$ then $\lambda$ is an $L^2(\RM)$ eigenvalue
of $\mathcal{A}_\alpha[u_0]$  if and only if $\lambda$ is an $L^2(\RM)$ eigenvalue of $\mathcal{A}[u_0]$.  Consequently, Proposition \ref{L3} and Theorem \ref{Th1}
together  immediately imply the following strong spectral stability result for $\mathcal{A}[u_0]$ on $L^2_\alpha(\RM)$.

\begin{proposition}\label{P:strong_spec}
Let $u_0(\cdot;k,c)$ be a smooth solitary wave of the DP equation \eqref{DP} as constructed in Lemma \ref{L1}.
Given any $\alpha>0$  satisfying \eqref{conddef1}, we have that
\[
\sigma_{L^2(\RM)}\left(\mathcal{A}_\alpha[u_0]\right)\cap\left\{\lambda\in\CM:\realpart{\lambda}\geq 0\right\}=\left\{0\right\}.
\]
\end{proposition}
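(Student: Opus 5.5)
The proof splits the closed right half-plane $\{\realpart{\lambda}\geq 0\}$ into two parts: the essential spectrum of $\mathcal{A}_\alpha[u_0]$ and its point spectrum. We treat each in turn.

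\emph{Essential spectrum.} Fix $\alpha>0$ satisfying \eqref{conddef1}. By Proposition \ref{L3}, the essential spectrum of $\mathcal{A}_\alpha[u_0]$ on $L^2(\RM)$ lies in $\{\realpart{\lambda}\leq-\Delta_\alpha\}$ with $\Delta_\alpha>0$. Hence the closed right half-plane lies in the region $\Omega_+(\alpha)$; this containment is also recorded in Corollary \ref{C:roots}. On $\Omega_+(\alpha)$, every spectral point of $\mathcal{A}_\alpha[u_0]$ is either a resolvent point or an isolated eigenvalue of finite multiplicity. So it suffices to show that the only $L^2(\RM)$-eigenvalue of $\mathcal{A}_\alpha[u_0]$ with $\realpart{\lambda}\geq 0$ is $\lambda=0$, and that $0$ is indeed one.

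\emph{Transfer to the unweighted problem.} By Proposition \ref{P:spec_equiv}, which follows from Lemma \ref{DA}(ii)--(iii) via $D_\alpha=D$ on $\Omega_+(\alpha)$, a point $\lambda$ with $\realpart{\lambda}\geq0$ is an $L^2$-eigenvalue of $\mathcal{A}_\alpha[u_0]$ if and only if it is an $L^2(\RM)$-eigenvalue of $\mathcal{A}[u_0]$. Theorem \ref{Th1} states that the only such eigenvalue is $\lambda=0$. This excludes every nonzero $\lambda$ in the closed right half-plane, including purely imaginary ones and those with positive real part; for the latter one may also invoke Theorem \ref{T:spec_stab}.

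\emph{Zero is in the spectrum.} We must show $0\in\sigma(\mathcal{A}_\alpha[u_0])$. Translation invariance gives $\mathcal{A}[u_0]u_0'=0$. By Remark \ref{R:decay2}, $u_0'\in L^2_\alpha(\RM)$, so $e^{\alpha\xi}u_0'\in L^2(\RM)$ is a nontrivial kernel element of $\mathcal{A}_\alpha[u_0]$. This yields equality of the two sets rather than mere inclusion.

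\emph{Main obstacle.} The delicate point is the use of Lemma \ref{DA} on the imaginary axis. The essential spectrum of the unweighted operator $\mathcal{A}[u_0]$ fills $i\RM$, so there eigenvalues are only identified through the extended Evans function together with the symmetry $v(\xi)=\overline{v(-\xi)}$. Lemma \ref{DA}(ii) supplies exactly this identification, so we rely on it directly rather than re-deriving it.
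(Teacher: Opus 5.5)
Your proposal is correct and follows the paper's argument. Proposition \ref{L3} places the closed right half-plane inside $\Omega_+(\alpha)$, Lemma \ref{DA} (equivalently Proposition \ref{P:spec_equiv}) transfers eigenvalues with $\realpart{\lambda}\geq 0$ to the unweighted operator, and Theorem \ref{Th1} leaves only $\lambda=0$. Your explicit check that $e^{\alpha\xi}u_0'\in L^2(\RM)$ lies in the kernel, which gives equality with $\{0\}$ rather than mere inclusion, fills in a point the paper leaves implicit here; it only records this later, in Proposition \ref{P:gker}.
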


As a next step in our program, we aim to completely characterize the generalized kernel of 
$\mathcal{A}_\alpha[u_0]$ acting on $L^2(\RM)$.  This is completed in the next section.

\subsection{Generalized Kernel in the space $L^2_\alpha(\RM)$}
\label{s5}

Throughout this section, we will assume that $\alpha>0$ is such that the condition \eqref{conddef1} holds, and hence that the essential
spectrum of $\mathcal{A}_\alpha[u_0]$ acting on $L^2(\RM)$ is necessarily stable by Proposition \ref{L3}.  Further, for such $\alpha$
we know from Proposition \ref{P:strong_spec} that $\lambda=0$ is the unique $L^2(\RM)$ eigenvalue of $\mathcal{A}_\alpha[u_0]$ with non-negative 
real part.  Recalling again that $0\in\Omega_+(\alpha)$ it follows that $\lambda=0$ is an isolated eigenvalue of $\mathcal{A}_\alpha[u_0]$
with finite algebraic multiplicity.  
The goal of this  section will be to show the generalized kernel of $\mathcal{A}_\alpha[u_0]$
is generated only by the continuous Lie-point symmetries of the governing DP equation \eqref{DP}.

As motivation for this result, observe that by differentiating the profile equation \eqref{e:profile1} we find that\footnote{Note that since $k$ and $c$ are independent
parameters the functions $u_0'$ and $\partial_cu_0$ necessarily belong to $L^2(\RM)$ with exponential decay rates at infinity determined by the exponential
decay of $u_0$.}
\begin{equation}\label{e:ker_formal}
\mathcal{A}[u_0]u_0'=0~~~{\rm and}~~~\mathcal{A}[u_0]\partial_cu_0=-u_0',
\end{equation}
and hence we expect $\lambda=0$ to be an eigenvalue of $\mathcal{A}[u_0]$ with algebraic multiplicity at least two with at least a Jordan chain of height one above
the translational mode $u_0'$.  Note, however, that since we are working on the  weighted space we must  take particular 
care with the decay rates of the eigenfunctions to ensure they indeed lie in the $L^2_\alpha(\RM)$.  
To this end, we equivalently study the generalized kernel of the conjugated operator\,\footnote{In particular,
note that $\mathcal{A}_\alpha[u_0]$ is Fredholm with index zero.} $\mathcal{A}_\alpha[u_0]$ defined in \eqref{spec_w}
considered on $L^2(\RM)$.  Our main result in this section is the following characterization of the 
left and right generalized kernels of $\mathcal{A}_\alpha[u_0]$ acting on $L^2(\RM)$.

\begin{proposition}\label{P:gker}
Let $u_0(\cdot;k,c)$ be a smooth solitary wave solution of the DP equation \eqref{DP}, and assume $\alpha>0$
is such that the condition \eqref{conddef1} holds.  The operator $\mathcal{A}_\alpha[u_0]$ acting on $L^2(\RM)$ 
has $\lambda=0$ as an eigenvalue with algebraic multiplicity two and geometric multiplicity one.
In particular, defining the functions
\begin{align*}
\tilde{z}_1(\xi)&= u_0'(\xi)\\
 \tilde{z}_2(\xi)&= \partial_cu_0(z)\\
\tilde{\eta}_1(\xi)&=-\theta_1\left(\int_{-\infty}^\xi (1-\partial_y^2)(4-\partial_y^2)^{-1}\partial_cu_0(y)~dy	\right)+\theta_2(1-\partial_\xi^2)(4-\partial_\xi^2)^{-1}\left(u_0(\xi)-k\right)\\
\tilde{\eta}_2(\xi)&=\theta_1 (1-\partial_\xi^2)(4-\partial_\xi^2)^{-1}\left(u_0(\xi)-k\right),
\end{align*}
where 
\[
\theta_1 = \left(\frac{\partial}{\partial c}Q(u_0(\cdot;k,c))\right)^{-1},~~
	\theta_2=\theta_1^2\left(\int_\RM\partial_cu_0(\xi)\left(\int_{-\infty}^\xi(1-\partial_y^2)(4-\partial_y^2)^{-1}\partial_cu_0(y) dy\right)d\xi\right)
\]
and setting
\[
z_j(\xi)=e^{\alpha\xi}\tilde{z}_j(\xi)~~{\rm and}~~\eta_j(\xi)=e^{-\alpha\xi}\tilde{\eta}_j(\xi),
\]
the functions $\{z_j\}_{j=1}^2$ and $\{\eta_\ell\}_{\ell=1}^2$ provide a bi-orthogonal basis for the generalized
kernel of $\mathcal{A}_\alpha[u_0]$ and $\mathcal{A}_\alpha[u_0]^\dag$, respectively, acting on $L^2(\RM)$.  
In particular, we have $\left<z_j,\eta_\ell\right>=\delta_{j\ell}$
and the $z_j$ and $\eta_\ell$ satisfy the equations
\[
\mathcal{A}_\alpha[u_0]z_1=0,~~~\mathcal{A}_\alpha[u_0]z_2 = -z_1
\]
and
\[
\mathcal{A}^\dag_\alpha[u_0]\eta_1\in{\rm span}\left\{\eta_2\right\}\setminus\{0\},~~~\mathcal{A}^\dag_\alpha[u_0]\eta_2=0.
\]
Consequently, the spectral projection $\Pi$ for $\mathcal{A}_\alpha[u_0]$ associated with the eigenvalue $\lambda=0$ is given by\footnote{Observe that while
the spectral projection $\Pi$ is only well-defined for $\alpha>0$ (and satisfying \eqref{conddef1}), the formula for the spectral projection is independent of $\alpha$.}
\[
\Pi f = \sum_{j=1}^2\left<\eta_j,f\right>_{L^2}z_j
\]
for $f\in L^2(\RM)$.  This projection, along with its complementary projection $(1-\Pi)$, commutes with $\mathcal{A}_\alpha[u_0]$ on the domain of 
$\mathcal{A}_\alpha[u_0]$.
\end{proposition}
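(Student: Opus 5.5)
Because $\mathcal{A}_\alpha[u_0]=e^{\alpha\xi}\mathcal{A}[u_0]e^{-\alpha\xi}$ and its $L^2$-adjoint is $e^{-\alpha\xi}\mathcal{A}[u_0]^\dag e^{\alpha\xi}$, all the identities can be checked in unweighted form: $\mathcal{A}[u_0]\tilde z_1=0$, $\mathcal{A}[u_0]\tilde z_2=-\tilde z_1$, $\mathcal{A}[u_0]^\dag\tilde\eta_2=0$, $\mathcal{A}[u_0]^\dag\tilde\eta_1\in{\rm span}\{\tilde\eta_2\}\setminus\{0\}$, and $\langle\tilde z_j,\tilde\eta_\ell\rangle=\delta_{j\ell}$. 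This works because $\langle z_j,\eta_\ell\rangle=\langle\tilde z_j,\tilde\eta_\ell\rangle$. Separately, we must confirm membership: $z_j=e^{\alpha\xi}\tilde z_j\in L^2$ and $\eta_\ell=e^{-\alpha\xi}\tilde\eta_\ell\in L^2$.

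\textbf{Identities.} The relations for $\tilde z_j$ are \eqref{e:ker_formal}, obtained by differentiating the profile equation in $\xi$ and in $c$. For the adjoint, write $\mathcal{A}[u_0]^\dag=-\mathcal{L}[u_0]J$, using that $\mathcal{L}$ is symmetric and $J$ is skew. The quantity $(1-\partial^2)(4-\partial^2)^{-1}(u_0-k)=\frac{\delta Q}{\delta u}(u_0)$ satisfies $J\frac{\delta Q}{\delta u}=u_0'$ by \eqref{e:translation}. Moreover $\mathcal{L}u_0'=0$ (since $Ju_0'$-type manipulations give $\mathcal{L}[u_0]u_0'=0$ directly from the once-integrated profile equation). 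Hence $\mathcal{A}^\dag\tilde\eta_2=-\theta_1\mathcal{L}u_0'=0$. Next set $g(\xi)=\int_{-\infty}^\xi(1-\partial^2)(4-\partial^2)^{-1}\partial_cu_0$. Then $Jg=(1-\partial^2)^{-1}(4-\partial^2)\partial_\xi g=\partial_cu_0$, so $\mathcal{A}^\dag g=-\mathcal{L}\partial_cu_0$. Differentiating $\mathcal{L}[u_0](u_0-k)$-type identities in $c$ shows $\mathcal{L}\partial_cu_0=-\frac{\delta Q}{\delta u}(u_0)$ up to constants. Therefore $\mathcal{A}^\dag\tilde\eta_1$ is a nonzero multiple of $\tilde\eta_2$.

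\textbf{Biorthogonality.} Integrating by parts gives $\langle u_0',\tilde\eta_2\rangle=\theta_1\langle u_0',\frac{\delta Q}{\delta u}\rangle=0$, since this is $\frac{d}{d\xi}$ of $Q$. Also $\langle\partial_cu_0,\tilde\eta_2\rangle=\theta_1\partial_cQ=1$; this requires Lemma \ref{L:non-deg} so that $\theta_1$ is defined. For $\tilde\eta_1$, integration by parts gives $\langle u_0',g\rangle=-\langle u_0-k,(1-\partial^2)(4-\partial^2)^{-1}\partial_cu_0\rangle=-\partial_cQ$, so $\langle\tilde z_1,\tilde\eta_1\rangle=1$. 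The constant $\theta_2$ is chosen precisely so that $\langle\tilde z_2,\tilde\eta_1\rangle=0$.

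\textbf{Weights, which I expect to be the main obstacle.} The functions $u_0'$ and $\partial_cu_0$ decay like $e^{-\sqrt{(c-4k)/(c-k)}|\xi|}$, so by Remark \ref{R:decay2} and \eqref{conddef1}, $e^{\alpha\xi}\tilde z_j\in L^2$. The nonlocal operator $(4-\partial^2)^{-1}$ preserves decay rates below $2$. The function $g$ tends to $0$ at $-\infty$ and to a constant $\int(1-\partial^2)(4-\partial^2)^{-1}\partial_cu_0$ at $+\infty$. It is bounded and only needs $e^{-\alpha\xi}$ decay at $+\infty$, which holds because $\alpha>0$; at $-\infty$ it decays exponentially at a rate exceeding $\alpha$. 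This is exactly why the weighted space is needed, since $\tilde\eta_1\notin L^2$ in general.

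\textbf{Multiplicity.} Geometric multiplicity is one: any kernel element solves the third-order ODE, and by Corollary \ref{C:roots} at $\lambda=0$ only a one-dimensional subspace lies in $L^2$ after conjugation. Equivalently, the Evans function argument shows the decaying-at-$+\infty$ manifold is one-dimensional. Algebraic multiplicity is exactly two: a third chain element $z_3$ with $\mathcal{A}_\alpha z_3=-z_2$ would force $\langle z_2,\eta_2\rangle=\langle -\mathcal{A}_\alpha z_3,\eta_2\rangle=-\langle z_3,\mathcal{A}^\dag_\alpha\eta_2\rangle=0$, contradicting $\langle z_2,\eta_2\rangle=1$. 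Finally, the Riesz projection for an isolated eigenvalue with this biorthogonal Jordan structure has the stated form, and it commutes with $\mathcal{A}_\alpha$ by standard theory, since $0$ is isolated in $\Omega_+(\alpha)$ and $\mathcal{A}_\alpha$ is Fredholm of index zero.
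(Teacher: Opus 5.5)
Your proof is correct. Its decisive step is the same as the paper's: the Jordan chain over $z_1$ stops at $z_2$ because $z_2$ pairs nontrivially with the adjoint kernel element $\eta_2$, i.e. $\langle z_2,\eta_2\rangle=\theta_1\partial_cQ=1$, which rests on Lemma \ref{L:non-deg}. The paper phrases this as the Fredholm alternative; you phrase it as a direct pairing.

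The routes differ in two places.

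\textbf{Geometric simplicity.} The paper factors $\mathcal{A}_\alpha[u_0]=J_\alpha(4-(\partial_\xi-\alpha)^2)^{-1}\mathcal{S}_\alpha[u_0]$. It then uses injectivity of the multiplier $J_\alpha$ and Sturm--Liouville simplicity of the second-order operator $\mathcal{S}_\alpha[u_0]$. It reads off $\ker\mathcal{A}^\dag_\alpha[u_0]$ as $J_{-\alpha}^{-1}e^{-\alpha\xi}u_0'$ from the same factorization. You instead count decaying modes of the conjugated third-order ODE at $+\infty$. At $\lambda=0$ the rates are $\alpha-\beta<0<\alpha<\alpha+\beta$, with $\beta=\sqrt{(c-4k)/(c-k)}$. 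This uses only hyperbolicity of the asymptotic system and needs no Hamiltonian factorization.

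\textbf{Verification of the basis.} The paper says the biorthogonal basis and the form of $\Pi$ ``follow trivially''; you actually check them:
\begin{itemize}
\item the adjoint chain relation, via the $c$-derivative of the integrated profile equation;
\item $\langle\tilde z_1,\tilde\eta_1\rangle=1$, by integrating by parts against the bounded function $g$;
\item the fact that $\theta_2$ is exactly what makes $\langle\tilde z_2,\tilde\eta_1\rangle=0$;
\item $\eta_1\in L^2(\RM)$, where $\alpha>0$ absorbs the bounded, generally non-decaying, behavior of $g$ at $+\infty$.
\end{itemize}

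\textbf{One point to tighten.} In $\mathcal{L}[u_0]\partial_cu_0=-\frac{\delta Q}{\delta u}(u_0)$, replace ``up to constants'' by ``up to sign, with zero additive constant.'' The identity $\frac{\delta\mathcal{H}}{\delta u}(u_0)+c\frac{\delta Q}{\delta u}(u_0)=0$ holds exactly for every $c$, since both terms vanish at infinity. An additive constant would put a non-$L^2$ term into $\mathcal{A}^\dag_\alpha[u_0]\eta_1$. The sign ambiguity comes from \eqref{e:JLdef}, where $\mathcal{L}[u_0]$ is the negative of $\frac{\delta^2\mathcal{H}}{\delta u^2}+c\frac{\delta^2Q}{\delta u^2}$. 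It changes no conclusion.
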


\begin{proof}
First, we note from Proposition \ref{L3} that when $\alpha>0$ satisfies \eqref{conddef1} the operator $\mathcal{A}_\alpha[u_0]$ is necessarily Fredholm
with index zero.  Consequently, we see that $\lambda=0$ is either in the resolvent set of $\mathcal{A}_\alpha[u_0]$ or is
 an isolated eigenvalue with finite multiplicity.  
Further, from \eqref{e:ker_formal} we readily see by conjugating by $e^{\alpha\xi}$ that we formally have
\begin{equation}\label{e:gker1}
\mathcal{A}_\alpha[u_0]e^{\alpha\xi}u_0'=0,~~{\rm and}~~\mathcal{A}_\alpha[u_0]e^{\alpha\xi}\partial_cu_0=-e^{\alpha\xi}u_0'.
\end{equation}
Noting that \eqref{conddef1} implies that $e^{\alpha\xi}u_0'$ and $e^{\alpha\xi}\partial_cu_0$ both decay exponentially fast as $\xi\to\pm\infty$,
it follows for such $\alpha$  that $\lambda=0$ is an eigenvalue of $\mathcal{A}_\alpha[u_0]$ of algebraic multiplicity at least two.  
Using the Fredholm Alternative, our aim is to show that these functions constitute the entirety of the generalized kernel for $\mathcal{A}_\alpha[u_0]$.

To this end, we first recall that the Hamiltonian structure of \eqref{DP} implies that the conjugated linearized operator $\mathcal{A}_\alpha[u_0]$
can be decomposed as $\mathcal{A}_\alpha[u_0]=J_\alpha\mathcal{L}_\alpha[u_0]$, where here\footnote{Compare to \eqref{e:Adef} and \eqref{e:JLdef} in the unweighted case.}
\begin{align*}
\mathcal{L}_\alpha[u_0]&=(4-(\partial_\xi-\alpha)^2)^{-1}\left((4-(\partial_\xi-\alpha)^2)(u_0-c)+3c\right),\\
J_\alpha&=(\partial_\xi-\alpha)\left(1-(\partial_\xi-\alpha)^2\right)^{-1}\left(4-(\partial_\xi-\alpha)^2\right).
\end{align*}
In particular, $\mathcal{L}_\alpha[u_0]$ is the composition of the boundedly invertible operator $(4-(\partial_\xi-\alpha)^2)^{-1}$
with the Sturm-Liouville operator
\[
\mathcal{S}_\alpha[u_0]:=(4-(\partial_\xi-\alpha)^2)(u_0-c)+3c.
\]
Since $u_0(\xi)<c$ for all $\xi\in\RM$, standard Sturm-Liouville theory implies that the eigenvalues of $\mathcal{S}_\alpha[u_0]$ are all simple and hence,
since $\mathcal{S}_\alpha[u_0]e^{\alpha\xi}u_0'=0$, that the kernel of $\mathcal{S}_\alpha[u_0]$ is simple and spanned by $e^{\alpha\xi}u_0'$.  
Since the operator $J_\alpha$ has a trivial kernel in $L^2(\RM)$, as is easily confirmed using the Fourier transform,
it follows that the kernel of $\mathcal{A}_\alpha[u_0]$ is simple and spanned precisely by $e^{\alpha\xi}u_0'$.

Continuing, we note from \eqref{e:gker1} that $\mathcal{A}_\alpha[u_0]$ admits a Jordan chain above $e^{\alpha\xi}u_0'$ of length at least one.  Consequently,
$\lambda=0$ is necessarily an eigenvalue with algebraic multiplicity at least two.
To show that the algebraic multiplicity is exactly two, we rely on the Fredholm alternative and hence must identify the kernel of the operator
$\mathcal{A}^\dag_\alpha[u_0] = -\mathcal{L}_{-\alpha}[u_0]J_{-\alpha}$ acting on $L^2(\RM)$.  Using that, as above, the operator $J_{-\alpha}$ has a trivial
kernel on $L^2(\RM)$, we see as above that $\lambda=0$ is an eigenvalue of $\mathcal{A}^\dag_\alpha[u_0]$ with geometric multiplicity exactly one and,
in fact, that\footnote{Here, we are using that the solitary wave has the same exponential decay to its asympttoic end state at both plus and minus infinity.}
\[
{\rm ker}\left(\mathcal{A}^\dag_\alpha[u_0]\right)={\rm span}\left\{\left(J_{-\alpha}\right)^{-1} e^{-\alpha\xi}u_0'\right\}={\rm span}\left\{e^{-\alpha\xi}\tilde{\eta}_2\right\}.
\]
By the Fredholm alterantive, it follows that the equation
\[
\mathcal{A}_\alpha[u_0]f=e^{\alpha\xi}\partial_cu_0
\]
will have a solution in $L^2(\RM)$ if and only if $e^{\alpha\xi}\partial_cu_0$ is orthogonal to the kernel of $\mathcal{A}^\dag_\alpha[u_0]$, i.e. if and only if
\[
\left<e^{-\alpha\xi}\tilde{\eta}_2,e^{\alpha\xi}\partial_cu_0\right>=0.
\]
However, note that the above inner product can be rewritten as
\[
\left<(u_0-k),(1-\partial_x^2)(4-\partial_x^2)^{-1}\partial_cu_0\right>=\frac{\partial}{\partial c}Q\left(u_0(\cdot;k,c)\right),
\]
where $Q$ is the momentum functional (associated with the spatial translation invariance of the DP equation) defined \eqref{e:cons1}.
In particular, by Lemma \ref{L:non-deg} it follows that the above derivative is in fact non-zero, and hence that $e^{\alpha\xi}\partial_cu_0$ is not in the range
of $\mathcal{A}_\alpha[u_0]$.  The stated representation and properties of the projection $\Pi$ now follow trivially, thus completing the proof.
\end{proof}

It is important to note that the functions $\tilde{z}_1(\xi)$, $\tilde{z}_2(\xi)$ and $\tilde{\eta}_2(\xi)$ all exhibit exponential decay as $\xi\to\pm\infty$ and,
in particular, they decay at the same exponential rate as the profile $u_0(\xi)$ decays to its asymptotic endstate $k$.  
Recalling Remark \ref{R:decay2}, it readily follows 
that $z_1$, $z_2$, and $\eta_2$ all belong to $L^2(\RM)$ so long as $\alpha$ satisfies \eqref{conddef1}.
The function $\tilde{\eta}_1$, on the other hand, decays exponentially as $\xi\to-\infty$ but is merely bounded as $\xi\to+\infty$ and hence
does not belong to $L^2(\RM)$, indicating that the generalized kernel of the adjoint of $\mathcal{A}[u_0]$
is not two-dimensional when acting on $L^2(\RM)$.   Nevertheless, for $\alpha>0$ satisfying \eqref{conddef1}  the function $\eta_1$ does
decay exponentially as $\xi\to\pm\infty$, i.e. we do have $\eta_1\in L^2(\RM)$, and hence the generalized kernel
of the adjoint of $\mathcal{A}_\alpha[u_0]$ is two-dimensional for such $\alpha$.  This is an important observation as it allows
us to construct a corresponding spectral projection onto the generalized kernel of $\mathcal{A}_\alpha[u_0]$ acting on $L^2(\RM)$,
while such a construction would not be possible for $\mathcal{A}[u_0]$ acting on $L^2(\RM)$.

\begin{remark}\label{R:triv_ker}
By similar considerations as in the discussion directly above, one can easily see how the kernel of $\mathcal{A}_\alpha[u_0]$ is trivial
when $\alpha>1$.  Indeed, from the above work the only candidate for an element in the kernel of $\mathcal{A}_\alpha[u_0]$
is a constant multiple of $e^{\alpha\xi}u_0$.  From Remark \ref{R:decay}, however, we see that 
\[
e^{\alpha\xi}u_0'(\cdot;k,c)\sim e^{(r_{\pm}+\alpha)\xi}~~{\rm as}~~\xi\to\mp\infty.
\]
Since  $|r_{\pm}|<1$, it is immediate that $e^{\alpha\xi}u_0$ is not in $L^2(\RM)$ when $\alpha>1$, as claimed.  Compare,
for instance, to Proposition \ref{P:point_bigalpha}
 \end{remark}

\section{Semigroup Decay Estimates \& Linear Asymptotic Stability}\label{S:LinDecay}

Now that we have studied the spectrum of the linearized operator $\mathcal{A}[u_0]$ on both $L^2(\RM)$ and $L^2_\alpha(\RM)$, we proceed to upgrade the spectral
stability results on $L^2_\alpha(\RM)$ to linear decay estimates.  That is, we now study the linear evolution problem
\[
v_t=\mathcal{A}[u_0]v
\]
posed on $L^2_\alpha(\RM)$ or, equivalently, the linear evolution problem
\[
w_t=\mathcal{A}_\alpha[u_0]w
\]
posed on $L^2(\RM)$ where, recall, $\mathcal{A}_\alpha[u_0]$ is the weighted (conjugated) linear operator defined in \eqref{spec_w}.  To this end, our main goal
will be to prove that $\mathcal{A}_\alpha[u_0]$ generates a $C_0$-semigroup on $L^2(\RM)$ which exhibits exponential decay on the co-dimension
two subspace orthogonal to its generalized kernel.  Following \cite{Pego94}, we first decompose the operator $\mathcal{A}_\alpha[u_0]$
as
\[
\mathcal{A}_\alpha[u_0]=\mathcal{A}_\alpha^\infty + \left(\mathcal{A}_\alpha[u_0]-\mathcal{A}_\alpha^\infty\right)
\]
where here 
\[
\mathcal{A}_\alpha^\infty:=(\partial_\xi-\alpha)(1-(\partial_\xi-\alpha)^2)^{-1}\left((k-c)(4-(\partial_\xi-\alpha)^2)+3c\right)
\]
denotes the (free) asymptotic linearized operator obtained by replacing the solution $u_0$ in  $\mathcal{A}_\alpha[u_0]$ by its asymptotically constant values $k$.
We first study the free evolution generated by $\mathcal{A}_\alpha^\infty$, obtaining exponential decay of the associated semigroup
on $L^2(\RM)$ as well as appropriate resolvent bounds on $L^2(\RM)$ and $H^1(\RM)$.  Through various estimates we then upgrade
these results to the original linearized operator $\mathcal{A}_\alpha[u_0]$.

\subsection{Free Evolution \& Resolvent Estimates}

Note that since the free evolution operator $\mathcal{A}_\alpha^\infty$ has constant coefficients, it clearly generates a $C_0$ semigroup
on both $L^2(\RM)$ and $H^1(\RM)$ via the Fourier transform which, here, we define as
\[
\hat{f}(z) = \int_\RM e^{-i\xi z} f(\xi)d\xi,~~f\in L^2(\RM).
\]
Since the spectrum of $\mathcal{A}_\alpha^\infty$ on $L^2(\RM)$ is precisely the essential spectrum
of the weighted operator $\mathcal{A}_\alpha[u_0]$, the following result follows directly from Proposition \ref{L3}.

\begin{lemma}\label{L:free_decay}
Let $u_0=u_0(\cdot;k,c)$ be a smooth solitary wave of the DP equation \eqref{DP}, as constructed in Lemma \ref{L1}.
For each integer $n$ and for any $0<\alpha<\sqrt{\frac{c-4k}{c-k}}$ any $0<b<\alpha\left(c-k-\frac{3k}{1-\alpha^2}\right)$, there exist a constant $C=C(\alpha,b)>0$
such that
\[
\left\|\partial_\xi^n e^{\mathcal{A}_\alpha^\infty t}w\right\|_{L^2(\RM)}\leq Ce^{-bt}\|\partial_\xi^n w\|_{L^2(\RM)}
\]
for all $t>0$ and $w\in H^n(\RM)$.
\end{lemma}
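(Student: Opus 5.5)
The plan is to diagonalize $\mathcal{A}_\alpha^\infty$ with the Fourier transform, since it has constant coefficients. Under the substitution $\partial_\xi\to iz$ it becomes multiplication by the symbol
\[
\lambda_\alpha(z)=(iz-\alpha)\,\frac{(k-c)\left(4-(iz-\alpha)^2\right)+3c}{1-(iz-\alpha)^2},\qquad z\in\RM .
\]
A short algebraic check shows that $\lambda_\alpha(z)$ is exactly the curve in \eqref{lmroots} with $\sigma=z$. Indeed, $(k-c)(4-s^2)+3c=(c-k)(s^2-1)+3k$, so the fraction equals $-(c-k)-3k/(s^2-1)$, which is precisely the expression in \eqref{lmroots}.

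Because $0<\alpha<1$, the denominator satisfies $|1-(iz-\alpha)^2|^2=(1+z^2-\alpha^2)^2+4z^2\alpha^2\geq (1-\alpha^2)^2>0$. Hence $\lambda_\alpha$ is smooth and bounded-plus-linear in $z$, with no singularity. The semigroup is therefore given by the Fourier multiplier $\widehat{e^{\mathcal{A}_\alpha^\infty t}w}(z)=e^{\lambda_\alpha(z)t}\hat w(z)$. This also confirms that $\mathcal{A}_\alpha^\infty$ generates a $C_0$-semigroup, since $\sup_z\realpart{\lambda_\alpha(z)}<\infty$.

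The key step is the uniform bound $\sup_{z\in\RM}\realpart{\lambda_\alpha(z)}=-\Delta_\alpha$, where $\Delta_\alpha$ is the gap in \eqref{SG2}. This is exactly the computation preceding Proposition \ref{L3}. The real part \eqref{Rel} is continuous in $\sigma$, and its critical points are at $\sigma=0$ and at $\sigma^2=1-\alpha^2+2\sqrt{1-\alpha^2}$. Its limit as $|\sigma|\to\infty$ is $-\alpha(c-k)$. Under \eqref{conddef1}, comparison \eqref{e:ess_spec_compare} shows that the maximum over $\RM\cup\{\pm\infty\}$ is attained at $\sigma=0$ and equals $-\Delta_\alpha<0$. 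I would double-check that the value at the nonzero critical points is also below $-\Delta_\alpha$. There $\partial_\sigma\realpart{\lambda}$ changes sign from positive to negative moving away from $0$ (the numerator $4(1-\alpha^2)-(\sigma^2+\alpha^2-1)^2$ is positive near $\sigma=0$), so these points are local minima of $\realpart{\lambda}$, and $\sigma=0$ is the global maximum. Consequently, for any $0<b<\Delta_\alpha$ we get $|e^{\lambda_\alpha(z)t}|\le e^{-\Delta_\alpha t}\le e^{-bt}$ for all $z$ and all $t>0$.

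The estimate then follows from Plancherel. Since $\widehat{\partial_\xi^n w}=(iz)^n\hat w$ and the multiplier commutes with $(iz)^n$,
\[
\left\|\partial_\xi^n e^{\mathcal{A}_\alpha^\infty t}w\right\|_{L^2}=\tfrac{1}{\sqrt{2\pi}}\left\|e^{\lambda_\alpha(z)t}(iz)^n\hat w\right\|_{L^2}\le e^{-bt}\|\partial_\xi^n w\|_{L^2}.
\]
In fact this holds with $C=1$, and even with $b=\Delta_\alpha$. The only real obstacle is the global maximization of \eqref{Rel}, and that is already supplied by the paper's computation leading to Proposition \ref{L3}.
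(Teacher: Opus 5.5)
Your route is the paper's: diagonalize the constant-coefficient operator with the Fourier transform, bound the real part of its symbol by $-\Delta_\alpha$ using the computation behind Proposition \ref{L3}, and finish with Plancherel. The step that fails is your identification of the symbol with \eqref{lmroots}.

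Your own algebra shows the fraction equals $-(c-k)-3k/(s^2-1)$. The factor in \eqref{lmroots}, however, is $c-k-3k/(1-s^2)=(c-k)+3k/(s^2-1)$. Hence your $\lambda_\alpha(z)$ is exactly $-\lambda(z)$, and its real part is minus \eqref{Rel}. That real part is $\geq\Delta_\alpha>0$, so your multiplier bound would give exponential growth, not decay.

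The source is an overall sign error in the paper's displayed formula for $\mathcal{A}_\alpha^\infty$, inherited from \eqref{e:JLdef}. Expanding the right-hand side of \eqref{DPLeig} gives $\partial_\xi(4-\partial_\xi^2)\bigl(c-u_0-3c(4-\partial_\xi^2)^{-1}\bigr)v$. So the true limiting operator is $(\partial_\xi-\alpha)(1-(\partial_\xi-\alpha)^2)^{-1}\bigl((c-k)(4-(\partial_\xi-\alpha)^2)-3c\bigr)$, with symbol $s\bigl(c-k(4-s^2)/(1-s^2)\bigr)$, $s=iz-\alpha$. That is the symbol the paper's proof actually uses. Derive the symbol from \eqref{DPLeigess} and \eqref{firstdisp} with $i\sigma\mapsto i\sigma-\alpha$, rather than asserting an identity that does not hold.

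Your ``double-check'' of the critical points is also backwards. If $\partial_\sigma\realpart{\lambda}$ passes from positive to negative, the point is a local maximum, not a minimum. Moreover, if the displayed derivative formula were literally correct, $\sigma=0$ would be a local minimum. That formula is really the $\sigma$-derivative of the fraction in \eqref{Rel}; the true derivative carries an extra factor $-3k\alpha$, and only with that factor is your classification right.

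You can avoid calculus entirely. A direct computation gives, for $0<\alpha<1$,
\[
\realpart{\lambda(\sigma)}=-\Delta_\alpha-\frac{3k\alpha\,\sigma^2(\sigma^2+3+\alpha^2)}{(1-\alpha^2)\left[(1+\sigma^2-\alpha^2)^2+4\sigma^2\alpha^2\right]}\le-\Delta_\alpha .
\]
This shows the global maximum is at $\sigma=0$ and also recovers the limit $-\alpha(c-k)$. With these two corrections, the rest of your argument is correct and coincides with the paper's proof, including your sharpening to $C=1$ and $b=\Delta_\alpha$.
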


\begin{proof}
The proof basically boils down to basic properties of the Fourier transform, along with our study of the essential spectrum of $\mathcal{A}_\alpha[u_0]$ 
acting on $L^2(\RM)$ carried out in Section \ref{s42}.  For completeness, the details are provided below.

Given a $w\in H^n(\RM)$, we can use the Fourier transform to write
\begin{align*}
\partial_\xi^n e^{\mathcal{A}_\alpha^\infty t}w(\xi)=\frac{1}{2\pi}\int_\RM e^{i\xi \sigma}(i\sigma)^n e^{(i\sigma-\alpha)(c-k(4-(i\sigma-\alpha)^2)/(1-(i\sigma-\alpha)^2))t}\widehat{w}(\sigma)d\sigma
\end{align*}
and hence using that the Fourier transform is an isometry of $L^2(\RM)$ we find that
\begin{align*}
4\pi \left\|\partial_\xi^n e^{\mathcal{A}_\alpha^\infty t}w\right\|_{L^2(\RM)}^2&
	=\left\| |\sigma|^ne^{\Re\left[(i\sigma-\alpha)(c-k(4-(i\sigma-\alpha)^2)/(1-(i\sigma-\alpha)^2))\right]t}\widehat{w}(\sigma)\right\|_{L^2(\RM;d\sigma)}^2\\
&=\int_\RM |\sigma|^{2n}e^{-2\alpha\left[c-k+3k(\alpha^2+\sigma^2-1)/(4\sigma^2\alpha^2+(1+\sigma^2-\alpha^2)^2\right]t}|\widehat{w}(\sigma)|^2d\sigma.
\end{align*}
By the analysis in Section \ref{s42} and, specifically, Proposition \ref{L3}, we know that for $0<\alpha<\sqrt{\frac{c-4k}{c-k}}$ that
\[
\sup_{\sigma\in\RM}\left(-2\alpha\left[c-k+\frac{3k(\alpha^2+\sigma^2-1)}{(4\sigma^2\alpha^2+(1+\sigma^2-\alpha^2)^2}\right]\right)=2\alpha\left(c-k-\frac{3k}{1-\alpha^2}\right)
\]
and hence for any $0<b<\alpha\left(c-k-\frac{3k}{1-\alpha^2}\right)$ there exists a constant $C>0$ such that
\[
\left\|\partial_\xi^n e^{\mathcal{A}_\alpha^\infty t}w\right\|_{L^2(\RM)}\leq Ce^{-bt}\|\partial_\xi^n w\|_{L^2(\RM)},
\]
as claimed.
\end{proof}

Equipped with the above free evolution estimates, we now aim to show that the weighted operator $\mathcal{A}_\alpha[u_0]$ generates
a $C_0$-semigroup on $L^2(\RM)$ and that, furthermore, the linear decay estimates  from Lemma \ref{L:free_decay} continue to hold for the semigroup generated
by $\mathcal{A}_\alpha[u_0]$ when restricted to the co-dimension two subspace orthogonal to the generalized kernel of $\mathcal{A}_\alpha[u_0]$.
This will be established through by applying both Hille-Yosida and  Prus's theorem which, in turn, both require us to obtain
appropriate resolvent estimates for the linearized operator $\mathcal{A}_\alpha[u_0]$.  As a first step in this direction, we establish 
the following resolvent bounds on the asymptotic operator $\mathcal{A}_\alpha^\infty$.

\begin{lemma}\label{L:free_resolv_bds}
For each $0<a<\alpha<\sqrt{\frac{c-4k}{c-k}}$ there exists constants $C_1>1$ and $C_2>0$ such that for
 $\lambda\in\overline{\Omega_+(a)}$ with $|\lambda|\geq C_1$ we have
and, further, 
\[
\left\|\partial_\xi^n\left(\lambda I-\mathcal{A}_\alpha^\infty\right)^{-1}\right\|_{L^2(\RM)\to L^2(\RM)}
\leq C_2|\lambda|^{n-2},~~n=0,1.
\]
\end{lemma}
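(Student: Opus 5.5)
The plan is to work entirely on the Fourier side, since $\mathcal{A}_\alpha^\infty$ is a constant-coefficient operator. With the convention $\hat f(z)=\int e^{-i\xi z}f\,d\xi$, the operator $\partial_\xi^n(\lambda I-\mathcal{A}_\alpha^\infty)^{-1}$ is the Fourier multiplier with symbol
\[
m_n(\sigma;\lambda)=\frac{(i\sigma)^n}{\lambda-a_\alpha(\sigma)},\qquad a_\alpha(\sigma):=(i\sigma-\alpha)\left(c-\frac{k(4-(i\sigma-\alpha)^2)}{1-(i\sigma-\alpha)^2}\right),
\]
where $a_\alpha$ is exactly the curve \eqref{lmroots} parametrizing the essential spectrum of $\mathcal{A}_\alpha[u_0]$. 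By Plancherel, the operator norm on $L^2(\RM)$ equals $\sup_{\sigma\in\RM}|m_n(\sigma;\lambda)|$. The whole lemma therefore reduces to a lower bound on $|\lambda-a_\alpha(\sigma)|$, uniform in $\sigma$, for $\lambda\in\overline{\Omega_+(a)}$ with $|\lambda|\ge C_1$.

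\textbf{Step 1: expansion of the symbol.} I would write
\[
a_\alpha(\sigma)=(c-k)(i\sigma-\alpha)-\frac{3k(i\sigma-\alpha)}{1-(i\sigma-\alpha)^2}=(c-k)(i\sigma-\alpha)+O(\langle\sigma\rangle^{-1}).
\]
This makes the symbol an affine function of $\sigma$ plus a decaying correction, consistent with the vertical asymptote \eqref{asb}.

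\textbf{Step 2: splitting the frequencies.} Fix a small $\delta>0$ and split into the regions $|\sigma|\le\delta|\lambda|$, $|\sigma|\ge\delta^{-1}|\lambda|$, and the intermediate band. In the first two regions the expansion gives $|\lambda-a_\alpha(\sigma)|\gtrsim|\lambda|+|\sigma|$. For $n=0$ this yields a bound of order $|\lambda|^{-1}$. For $n=1$, the quotient $|\sigma|/(|\lambda|+|\sigma|)$ is bounded. One then compares these with the claimed powers $|\lambda|^{n-2}$.

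\textbf{Step 3: the intermediate band $|\sigma|\sim|\lambda|$.} Here $\lambda$ can be close to the curve $a_\alpha(\RM)$. I would use the geometric containment from Corollary \ref{C:roots}, namely $\overline{\Omega_+(a)}\subset\Omega_+(\alpha)$. By \eqref{Rel} the two essential-spectrum curves are asymptotically the vertical lines $\realpart{\lambda}=-a(c-k)$ and $\realpart{\lambda}=-\alpha(c-k)$. Hence, for $|\lambda|$ large, their distance is bounded below by roughly $(\alpha-a)(c-k)>0$. This gives a uniform lower bound on $|\lambda-a_\alpha(\sigma)|$ in the band. I would then try to improve this lower bound to a power of $|\lambda|$ by exploiting the $O(\langle\sigma\rangle^{-1})$ structure of the correction term together with the placement of $\lambda$ to the right of the curve. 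Lemma \ref{PegL}, applied to the roots \eqref{3roots}, can be used to track where $\lambda$ sits relative to $a_\alpha$.

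\textbf{Main obstacle.} Step 3 is the hardest part. For $\lambda$ near $i\RM$ and $\sigma\approx\impart{\lambda}/(c-k)$, the denominator is only of size comparable to the spectral gap. This threatens the decay in $|\lambda|$. The first thing I would check is whether the precise statement intends additional restrictions on $\lambda$, or a different scaling of the bound, that make the estimate hold in this regime. If instead a uniform bound is all that survives there, I would isolate it explicitly and verify the claimed powers $|\lambda|^{n-2}$ region by region against it. Step 1 and the outer regions of Step 2 are routine.
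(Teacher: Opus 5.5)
The gap is in Step 3. You treat it as a technical obstacle to be overcome, but the statement is false, and your own Step 2 already shows it. Your reduction $\|\partial_\xi^n(\lambda I-\mathcal{A}_\alpha^\infty)^{-1}\|_{L^2\to L^2}=\sup_{\sigma\in\RM}|m_n(\sigma;\lambda)|$ and the expansion $a_\alpha(\sigma)=(c-k)(i\sigma-\alpha)+O(\langle\sigma\rangle^{-1})$ are correct. The region bounds in Step 2 are attained, not just upper bounds:

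For $n=0$, take $\lambda=R>0$ large, which lies in $\overline{\Omega_+(a)}$, and evaluate at $\sigma=0$. Since $a_\alpha(0)=-\Delta_\alpha$ by \eqref{SG2}, you get $\|(R I-\mathcal{A}_\alpha^\infty)^{-1}\|\geq (R+\Delta_\alpha)^{-1}$, which is not $O(R^{-2})$. More generally, no generator $B$ of a $C_0$-semigroup can have $\|(\lambda I-B)^{-1}\|=O(\lambda^{-2})$ as $\lambda\to+\infty$, because $\lambda(\lambda I-B)^{-1}\to I$ strongly.

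For $n=1$ and any fixed $\lambda$, letting $|\sigma|\to\infty$ gives $|m_1(\sigma;\lambda)|\to 1/(c-k)$. So $\|\partial_\xi(\lambda I-\mathcal{A}_\alpha^\infty)^{-1}\|\geq 1/(c-k)$, which is not $O(|\lambda|^{-1})$.

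Your Step 3 worry is also sharp. By \eqref{Rel}, the part of the essential spectrum of $\mathcal{A}_a[u_0]$ with large imaginary part lies strictly left of the line $\realpart{\lambda}=-a(c-k)$. Hence $\lambda_\tau:=(c-k)(i\tau-a)\in\Omega_+(a)$ for $|\tau|$ large, and $|\lambda_\tau-a_\alpha(\tau)|=(\alpha-a)(c-k)+O(|\tau|^{-1})$. So on $\overline{\Omega_+(a)}\cap\{|\lambda|\geq C_1\}$ the $n=0$ norm does not decay at all, and the $n=1$ norm grows like $|\lambda|$. What your argument actually proves there, and what is optimal, is the bound $C|\lambda|^{n}$, not $C|\lambda|^{n-2}$. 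Decay like $|\lambda|^{-1}$ for $n=0$ holds only in sectors $|\arg\lambda|\leq\pi/2-\epsilon$. This is the same lack of smoothing from the vertical asymptote \eqref{asb} that the paper itself discusses in Section \ref{S:nonlinear_try}.

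The paper reaches the false bound with a piecewise-exponential Green's function and Young's inequality, and it goes wrong in two places.

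First, it asserts $1/|r_j-r_i|=O(|\lambda|^{-1})$ for all $i\neq j$. But by \eqref{3roots}, $r_1-r_3\to-2$, so even its own crude bound gives only $O(|\lambda|^{-1})$ for $n=0$.

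Second, the displayed $G$ (with $G,G'$ continuous and a jump in $G''$) is the Green's function of the third-order ODE obtained after multiplying the resolvent equation by $1-(\partial_\xi-\alpha)^2$. The kernel of $(\lambda I-\mathcal{A}_\alpha^\infty)^{-1}$ is $(1-(\partial_\xi-\alpha)^2)G$, which multiplies the $j$-th exponential by $1-r_j^2$. For $j=2$ this factor is $\approx-\lambda^2/(c-k)^2$ and cancels the gain from $\prod_{i\neq 2}|r_2-r_i|^{-1}$. What remains is an $L^1$ contribution $\approx 1/\left((c-k)|\realpart{r_2}+\alpha|\right)$, which is exactly your Fourier answer.

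So instead of trying to improve Step 3, record these counterexamples and replace the lemma by the $C|\lambda|^n$ bound. Be aware that this removes the smallness used in \eqref{key_bd_1}: $\mathcal{A}_\alpha[u_0]-\mathcal{A}_\alpha^\infty$ is first order and $\|(\lambda I-\mathcal{A}_\alpha^\infty)^{-1}\|_{L^2\to H^1}$ is not small. The large-$|\lambda|$ estimate in Proposition \ref{P:resolv_bds} would therefore need a genuinely different argument.
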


\begin{proof}
Since the operator $\mathcal{A}_\alpha^\infty$ has constant coefficients, we can construct the Green's function explicitly
in order to study the resolvent operator $(\lambda I-\mathcal{A}_\alpha^\infty)^{-1}$.  To this end, 
recall from Corollary \ref{C:roots} that $\overline{\Omega_+(a)}\subset\Omega_+(\alpha)$ and that for
$\lambda\in\Omega_+(\alpha)$ the roots of the characteristic polynomial  $\mathcal{P}(\lambda,r-\alpha)$
will have one root of negative real part and two roots with positive real part.  As such,
for such $\lambda$ the equation 
$\mathcal{A}_\alpha^\infty w=\lambda w$ will have two linearly independent solutions that decay exponentially as $\xi\to-\infty$
and one linearly independent solution that will decay exponentially as $\xi\to+\infty$.  For $\lambda\in\Omega_+(\alpha)$ we can thus write the Green's function
explicitly as
\[
G(y;\lambda)=\left\{\begin{aligned}
								&a_1 e^{(r_1+\alpha)y}, &&y>0\\
								&a_2 e^{(r_2+\alpha)y} + a_3 e^{(r_3+\alpha)y}, &&y<0
								\end{aligned}\right.
\]
where the constants $a_j$ are given via
\[
a_j=\frac{1}{(u_0(0)-c)\prod_{i\neq j}(r_j-r_i)}.
\]
Note, in particular, that the constants $a_j$ are chosen to ensure that
\[
G(0+)=G(0-),~~~G_y(0+)=G_y(0-),~~{\rm and}~~G_{yy}(0+)-G_{yy}(0-)=u_0(0)-c,
\]
which can be readily verified.

By construction we have that $(\lambda I-\mathcal{A}_\alpha^\infty)G(\xi;\lambda)=\delta_0$ and hence
the resolvent operator can be represented via
\[
\partial_\xi^n\left(\lambda I-\mathcal{A}_\alpha^\infty\right)^{-1}w=\partial_\xi^nG\ast w
\]
for a given $w\in L^2(\RM)$ and non-negative integer $n$.  In particular, by Young's inequality we have 
\begin{equation}\label{e:young}
\left\|\partial_\xi^n\left(\lambda I-\mathcal{A}_\alpha^\infty\right)^{-1}w\right\|_{L^2(\RM)}\leq \left\|\partial_y^nG(\cdot;\lambda)\right\|_{L^1(\RM)}\|w\|_{L^2(\RM)}
\end{equation}
and hence it remains to estimate the $L^1$ norm of $\partial_y^nG(\cdot;\lambda)$ for $\lambda\in\overline{\Omega_+(a)}$ with $|\lambda|$ large.
To this end, using the explicit formula derived above one easily finds the (relatively crude) bound
\[
\left\|\partial_y^n G(\cdot;\lambda)\right\|_{L^1(\RM)}\leq\frac{1}{|u_0(0)-c|^2}\sum_{j=1}^3\frac{|r_j+\alpha|^n}{|\Re(r_j)+\alpha|} \prod_{\substack{i=1,2,3\\ i\neq j}}\frac{1}{|r_j-r_i|}
\]
that holds for $n=0,1$.  From Corollary \ref{C:roots}, we know for $\lambda\in\overline{\Omega_+(a)}$ that
$\realpart{r_j}+\alpha\geq \alpha-a>0$ for $j=2,3$, while $\realpart{r_1}\to-\infty$ as $|\lambda|\to\infty$ with $\lambda\in\overline{\Omega_+(a)}$.
Consequently, we see that $\left|\realpart{r_j}+\alpha\right|$ is uniformly bounded away from zero for $\lambda\in\Omega_+(a)$ and $|\lambda|$ sufficiently large.
Since \eqref{3roots} implies that $1/|r_j-r_i|=\mathcal{O}(|\lambda|^{-1})$ while $|r_2|=\mathcal{O}(|\lambda|)$ for such $\lambda$ with $|\lambda|$ sufficiently large, the result now follows.
\end{proof}

\subsection{Resolvent Estimates for $\mathcal{A}_\alpha[u_0]$}

Equipped with the above resolvent bounds for the asymptotic operator $\mathcal{A}_\alpha^\infty$, we now
aim to establish resolvent bounds on the weighted linearized operator $\mathcal{A}_\alpha[u_0]$.
The key observation in this direction is the following algebraic identity:
\begin{align*}
\left(\lambda I-\mathcal{A}_\alpha[u_0]\right)^{-1}&= \left(\lambda I-\mathcal{A}_\alpha^\infty\right)^{-1}
	\left( \left(\lambda I-\mathcal{A}_\alpha[u_0]\right) \left(\lambda I-\mathcal{A}_\alpha^\infty\right)^{-1}\right)^{-1}\\
&=\left(\lambda I-\mathcal{A}_\alpha^\infty\right)^{-1}
	\left( \left(\lambda I-\mathcal{A}_\alpha^\infty+\left(\mathcal{A}_\alpha^\infty-\mathcal{A}_\alpha[u_0]\right)\right) \left(\lambda I-\mathcal{A}_\alpha^\infty\right)^{-1}\right)^{-1}\\
&=\left(\lambda I-\mathcal{A}_\alpha^\infty\right)^{-1}
	\left(I-\left(\mathcal{A}_\alpha[u_0]-\mathcal{A}_\alpha^\infty\right) \left(\lambda I-\mathcal{A}_\alpha^\infty\right)^{-1}\right)^{-1}	
\end{align*}
valid for $\lambda\in\rho(\mathcal{A}_\alpha[u_0])\cap\rho(\mathcal{A}_\alpha^\infty)$.  
In particular, this shows in particular, that if we are able to control 
the operator
\begin{equation}\label{e:Ceqn}
 \left(1-\left(\mathcal{A}_\alpha[u_0]-\mathcal{A}_\alpha^\infty\right)\left(\lambda I-\mathcal{A}_\alpha^\infty\right)^{-1}\right)^{-1}
\end{equation}
for sufficiently large $\lambda$ then the free resolvent bounds from Lemma \ref{L:free_resolv_bds} 
can be upgraded to resolvent bounds for the operator $\mathcal{A}_\alpha[u_0]$.
This is the content of the following result.

\begin{proposition}\label{P:resolv_bds}
For each $0<a<\alpha<\sqrt{(c-4k)/(c-k)}$ there exists constants $C_1>1$, $C_2>0$ such that
for $\lambda\in\overline{\Omega_+(a)}$ with $|\lambda|\geq C_1$ we have $\lambda\in\rho(\mathcal{A}_\alpha[u_0])$ and, further, we have the resolvent bound
\[
\left\|\left(\lambda I-\mathcal{A}_\alpha[u_0]\right)^{-1}\right\|_{L^2(\RM)\to L^2(\RM)}
\leq C_2|\lambda|^{-2}.
\]
\end{proposition}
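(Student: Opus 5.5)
The plan is to use the factorization displayed just before the statement,
\[
(\lambda I-\mathcal{A}_\alpha[u_0])^{-1}=(\lambda I-\mathcal{A}_\alpha^\infty)^{-1}\big(I-K(\lambda)\big)^{-1},\qquad K(\lambda):=\big(\mathcal{A}_\alpha[u_0]-\mathcal{A}_\alpha^\infty\big)(\lambda I-\mathcal{A}_\alpha^\infty)^{-1}.
\]
The goal is to show $\|K(\lambda)\|_{L^2\to L^2}\le 1/2$ for $\lambda\in\overline{\Omega_+(a)}$ with $|\lambda|\ge C_1$. A Neumann series then gives $\|(I-K(\lambda))^{-1}\|\le 2$. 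Combined with Lemma \ref{L:free_resolv_bds} for $n=0$, this yields both that $\lambda\in\rho(\mathcal{A}_\alpha[u_0])$ and the bound $\|(\lambda I-\mathcal{A}_\alpha[u_0])^{-1}\|\le 2C_2|\lambda|^{-2}$.

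To prove the claim $\|K(\lambda)\|\le 1/2$, I first write the perturbation explicitly. Since $\mathcal{A}_\alpha[u_0]=J_\alpha\mathcal{L}_\alpha[u_0]$ with $\mathcal{L}_\alpha[u_0]=(u_0-c)+3c(4-(\partial_\xi-\alpha)^2)^{-1}$, the nonlocal pieces coincide with those of $\mathcal{A}^\infty_\alpha$. Hence
\[
\mathcal{A}_\alpha[u_0]-\mathcal{A}_\alpha^\infty = J_\alpha\,(u_0-k)\cdot,\qquad J_\alpha=(\partial_\xi-\alpha)(1-(\partial_\xi-\alpha)^2)^{-1}(4-(\partial_\xi-\alpha)^2).
\]
Write $J_\alpha=(\partial_\xi-\alpha)\big(I+3(1-(\partial_\xi-\alpha)^2)^{-1}\big)$. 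For $0<\alpha<1$ the operator $(1-(\partial_\xi-\alpha)^2)^{-1}$ is bounded on $L^2$, since its symbol $1/(1-(i\sigma-\alpha)^2)$ is bounded. Therefore $J_\alpha = B_\alpha(\partial_\xi-\alpha)$ with $B_\alpha$ bounded (Fourier multipliers commute). It follows that
\[
K(\lambda)w=B_\alpha(\partial_\xi-\alpha)\big[(u_0-k)\,R^\infty w\big],\qquad R^\infty:=(\lambda I-\mathcal{A}_\alpha^\infty)^{-1}.
\]
Expanding by the product rule gives
\[
\|K(\lambda)w\|\lesssim \|u_0'\|_{L^\infty}\|R^\infty w\|+\|u_0-k\|_{L^\infty}\big(\|\partial_\xi R^\infty w\|+\alpha\|R^\infty w\|\big).
\]
By Lemma \ref{L:free_resolv_bds} with $n=0,1$, this is $\lesssim (|\lambda|^{-2}+|\lambda|^{-1})\|w\|=\mathcal{O}(|\lambda|^{-1})\|w\|$, which is below $1/2$ once $C_1$ is large.

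The main obstacle is exactly this loss of one derivative. The perturbation $J_\alpha(u_0-k)$ is a first-order operator with non-decaying symbol, so the argument depends on the $n=1$ free bound gaining a full power $|\lambda|^{-1}$. That bound is supplied by Lemma \ref{L:free_resolv_bds}, and it rests on all three roots separating at rate $|\lambda|$. One also needs uniformity of these bounds over the unbounded set $\overline{\Omega_+(a)}\cap\{|\lambda|\ge C_1\}$, including near the vertical asymptote $\realpart{\lambda}=-a(c-k)$. This uniformity comes from Corollary \ref{C:roots}, which keeps $\realpart{r_j}+\alpha\ge \alpha-a$ for $j=2,3$, and that is already encoded in the free lemma.

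Finally, the Neumann series also shows that $\lambda I-\mathcal{A}_\alpha[u_0]$ is a bijection from the domain onto $L^2$, so $\lambda$ lies in the resolvent set.
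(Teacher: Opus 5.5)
Your argument is the paper's own argument. It uses the same factorization through $(\lambda I-\mathcal{A}_\alpha^\infty)^{-1}$, a Neumann series justified by $\|K(\lambda)\|=\mathcal{O}(|\lambda|^{-1})$, and Lemma~\ref{L:free_resolv_bds} with $n=0,1$ as the only analytic input. Your identity $\mathcal{A}_\alpha[u_0]-\mathcal{A}_\alpha^\infty=J_\alpha(u_0-k)$ is an explicit form of the paper's $(1-(\partial_\xi-\alpha)^2)^{-1}\mathcal{B}_\alpha[u_0]$.

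The gap is that this input is false, and so is the statement itself.

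First, a bound $C_2|\lambda|^{-2}$ along an unbounded set of $\lambda$ is impossible for any closed, densely defined operator $A$. For $x\in D(A)$ with $\|x\|=1$, writing $x=(\lambda I-A)^{-1}(\lambda x-Ax)$ gives $1\leq\|(\lambda I-A)^{-1}\|\,(|\lambda|+\|Ax\|)$.

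Second, on $i\RM\subset\overline{\Omega_+(a)}$ the resolvent does not decay at all. The operator $\mathcal{A}_\alpha^\infty$ is the Fourier multiplier with symbol $\lambda_\alpha(\sigma)$ from \eqref{lmroots}, and $\lambda_\alpha(\sigma)=(c-k)(i\sigma-\alpha)+\mathcal{O}(|\sigma|^{-1})$. Taking $\lambda=i\tau$ and $\sigma=\tau/(c-k)$ gives $\|(i\tau I-\mathcal{A}_\alpha^\infty)^{-1}\|\geq(\alpha(c-k))^{-1}-o(1)$ and $\|\partial_\xi(i\tau I-\mathcal{A}_\alpha^\infty)^{-1}\|\gtrsim|\tau|$. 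These replace the claimed $|\tau|^{-2}$ and $|\tau|^{-1}$.

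The same holds for $\mathcal{A}_\alpha[u_0]$. The curve \eqref{lmroots} lies in its spectrum (use Weyl sequences supported far out, where $u_0\approx k$) and has the vertical asymptote \eqref{asb}. Hence $\|(i\tau I-\mathcal{A}_\alpha[u_0])^{-1}\|\geq 1/\mathrm{dist}(i\tau,\sigma(\mathcal{A}_\alpha[u_0]))\geq(\alpha(c-k))^{-1}-o(1)$. This is the resolvent version of the missing smoothing estimate discussed in Section~\ref{S:nonlinear_try}.

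Your stated reason for the free lemma is also off. By \eqref{3roots} only $r_2$ escapes to infinity, while $r_1\to-1$ and $r_3\to1$. Moreover, the paper's kernel is the Green's function of the third-order ODE. It omits the factor $1-(\partial_\xi-\alpha)^2$ produced by $(1-(\partial_\xi-\alpha)^2)^{-1}$ in $\mathcal{A}_\alpha^\infty$. Restoring that factor multiplies the $r_2$-term by $1-r_2^2\sim\lambda^2/(c-k)^2$, and its $L^1$-norm becomes of order one.

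Consequently the Neumann-series step cannot be repaired within your scheme. The perturbation $J_\alpha(u_0-k)$ is first order, the same order as $\mathcal{A}_\alpha^\infty$, so the derivative loss you flagged is genuine. Test $K(i\tau)$ on a wave packet of frequency $\tau/(c-k)$ sitting over the core of the soliton: $\|K(i\tau)\|$ grows like $|\tau|$. The region near the vertical asymptote is exactly where uniformity fails, not something "already encoded" in the free lemma.

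What is true, and all that Theorem~\ref{T:Pruss} needs, is $\sup\|(\lambda I-\mathcal{A}_\alpha[u_0])^{-1}\|<\infty$ over large $\lambda\in\overline{\Omega_+(a)}$. To prove it, keep the variable-coefficient transport in the principal part: write $\mathcal{A}_\alpha[u_0]=T_\alpha+B$. Here $T_\alpha w=(\partial_\xi-\alpha)\big((u_0-c)w\big)$ and $B=3(\partial_\xi-\alpha)(1-(\partial_\xi-\alpha)^2)^{-1}u_0$, which is of order $-1$. This is in the paper's convention; that $J\mathcal{L}$ is the negative of the operator in \eqref{DPLeig}, so flip the overall sign so that the symbol at $u_0\equiv k$ is \eqref{lmroots}.

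The resolvent of $T_\alpha$ is explicit: it reduces to a first-order ODE for $(u_0-c)w$, solved by an integrating factor. Using $\alpha>a$ and the exponential convergence $u_0\to k$, it is bounded uniformly in $\impart{\lambda}$ on that region. Since $BT_\alpha$ is bounded, $B(\lambda I-T_\alpha)^{-1}=\lambda^{-1}\big(B+BT_\alpha(\lambda I-T_\alpha)^{-1}\big)=\mathcal{O}(|\lambda|^{-1})$. A Neumann series then gives an $\mathcal{O}(1)$ resolvent bound, which is the correct replacement for $C_2|\lambda|^{-2}$.
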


\begin{proof}
To begin, observe that 
\[
\mathcal{A}_\alpha[u_0]-\mathcal{A}_\alpha^\infty = \left(1-(\partial_\xi-\alpha)^2\right)^{-1}\mathcal{B}_\alpha[u_0]
\]
where $\mathcal{B}_\alpha[u_0]$ is a third order ordinary differential operator.  In particular, for each $s\geq 1$ the map
\[
\mathcal{A}_\alpha[u_0]-\mathcal{A}_\alpha^\infty:H^s(\RM)\to H^{s-1}(\RM)
\]
is clearly continuous.  Noting that $\left(\lambda I-\mathcal{A}_\alpha^\infty\right)^{-1}$ maps $H^s(\RM)$ into $H^{s+1}(\RM)$ continuously, it follows
from the free resolvent bounds in Lemma \ref{L:free_resolv_bds} that for $\lambda\in\overline{\Omega_+(a)}$ with $|\lambda|$ sufficiently large
that 
\begin{equation}\label{key_bd_1}
\begin{aligned}
\left\|\left(\mathcal{A}_\alpha[u_0]-\mathcal{A}_\alpha^\infty\right)\left(\lambda I-\mathcal{A}_\alpha^\infty\right)^{-1}g\right\|_{L^2(\RM)}
		&\leq C\left\|(\lambda I-\mathcal{A}_\alpha^\infty)^{-1}g\right\|_{H^1(\RM)}\\
&\leq C\left(|\lambda|^{-2}+|\lambda|^{-1}\right)\|g\|_{L^2(\RM)}\\
&\leq C|\lambda|^{-1}\|g\|_{L^2(\RM)}
\end{aligned}
\end{equation}
valid for all $g\in L^2(\RM)$.
It follows for such $\lambda$ sufficiently large 
that the operator in \eqref{e:Ceqn} is uniformly bounded from $L^2(\RM)$ into itself\footnote{Indeed, one simply needs to ensure
that the operator $\left(\mathcal{A}_\alpha[u_0]-\mathcal{A}_\alpha^\infty\right)\left(\lambda I-\mathcal{A}_\alpha^\infty\right)^{-1}$ has 
norm less than one as a map from $L^2(\RM)$ into itself.}.  Consequently, from the free resolvent bounds in Lemma \ref{L:free_decay} it 
follows that for such $\lambda$ we have
\begin{align*}
\left\|\left(\lambda I-\mathcal{A}_\alpha[u_0]\right)^{-1}f\right\|_{L^2(\RM)}&\leq C|\lambda|^{-2}\left\| \left(1-\left(\mathcal{A}_\alpha[u_0]-\mathcal{A}_\alpha^\infty\right)\left(\lambda I-\mathcal{A}_\alpha^\infty\right)^{-1}\right)^{-1}f \right\|_{L^2(\RM)}\\
&\leq C|\lambda|^{-2}\|f\|_{L^2(\RM)},
\end{align*}
as claimed.
\end{proof}

\subsection{Existence of a Spectral Gap on $L^2_\alpha(\RM)$}\label{S:gap}

We are now in position to prove the existence of a spectral gap for the operator $\mathcal{A}_\alpha[u_0]$ acting on $L^2(\RM)$.  This result
follows from the large $\lambda$ resolvent estimate in Proposition \ref{P:resolv_bds} as well as basic properties of complex analytic functions.

\begin{proposition}\label{P:gap}
Let $u_0(\cdot;k,c)$ be a smooth solitary wave of the DP equation \eqref{DP} as constructed in Lemma \ref{L1}.  For each $\alpha>0$ 
satisfying \eqref{conddef1} there exists a $\eta=\eta(a)>0$ such that
\[
\sigma_{L^2(\RM)}\left(\mathcal{A}_\alpha[u_0]\right)\cap\left\{\lambda\in\CM:\realpart{\lambda}>-\eta\right\}=\{0\}.
\]
\end{proposition}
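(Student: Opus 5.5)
The plan is to combine three facts already in hand: the large-$|\lambda|$ resolvent bound of Proposition \ref{P:resolv_bds}, the discreteness of the spectrum of $\mathcal{A}_\alpha[u_0]$ in $\Omega_+(\alpha)$, and the strong spectral stability of Proposition \ref{P:strong_spec}. Fix $\alpha$ satisfying \eqref{conddef1} and choose any auxiliary $a\in(0,\alpha)$. By Corollary \ref{C:roots}, $\{\realpart{\lambda}\geq 0\}\subset\overline{\Omega_+(a)}\subset\Omega_+(\alpha)$. Since $\overline{\Omega_+(a)}$ is closed and $\Omega_+(\alpha)$ is open, I first check that $\overline{\Omega_+(a)}$ contains a full strip $\{\realpart{\lambda}>-\eta_0\}$ for some $\eta_0>0$. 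This follows from the explicit essential spectrum curves \eqref{RI}: for parameter $a$ the curve lies to the left of the line $\realpart{\lambda}=-\Delta_a$ by Proposition \ref{L3}, and $\Delta_a>0$. So we may take $\eta_0=\Delta_a$.

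\textbf{Step 1 (large $|\lambda|$).} Proposition \ref{P:resolv_bds} gives $C_1>1$ such that every $\lambda\in\overline{\Omega_+(a)}$ with $|\lambda|\geq C_1$ lies in $\rho(\mathcal{A}_\alpha[u_0])$. Hence no spectrum lies in $\{\realpart{\lambda}>-\eta_0,\ |\lambda|\geq C_1\}$.

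\textbf{Step 2 (compact region).} Consider the compact set $K=\{\lambda:-\eta_0/2\leq\realpart{\lambda}\leq 0,\ |\lambda|\leq C_1\}\subset\Omega_+(\alpha)$. In $\Omega_+(\alpha)$ the operator $\lambda-\mathcal{A}_\alpha[u_0]$ is Fredholm of index zero, so the spectrum there consists of isolated eigenvalues of finite multiplicity. By Lemma \ref{DA}(iii), these are exactly the zeros of the Evans function $D$, which is analytic on $\Omega_+(\alpha)$. $D$ is not identically zero, since Proposition \ref{P:strong_spec} shows that $\lambda=0$ is its only zero with $\realpart{\lambda}\geq0$. Therefore $D$ has only finitely many zeros in $K$. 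Those on $\realpart{\lambda}=0$ reduce to $\lambda=0$, again by Proposition \ref{P:strong_spec}. The finitely many remaining zeros in $K\setminus\{0\}$ all have strictly negative real part. Let $\eta$ be the minimum of $\eta_0/2$ and the distances of these zeros to the imaginary axis; if there are none, take $\eta=\eta_0/2$.

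\textbf{Step 3 (conclusion).} Combining Steps 1 and 2 gives $\sigma(\mathcal{A}_\alpha[u_0])\cap\{\realpart{\lambda}>-\eta\}=\{0\}$, since the essential spectrum is already excluded by the choice of $\eta_0$.

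The main subtlety is the local-finiteness argument near $0$. Because $0$ is an isolated eigenvalue by Proposition \ref{P:gker}, no other zeros of $D$ accumulate there. More generally, zeros of an analytic, non-trivial function cannot accumulate inside $\Omega_+(\alpha)$, and $K$ sits compactly inside that domain. I would state explicitly that $K$ stays at positive distance from $\partial\Omega_+(\alpha)$, which is exactly what the choice $a<\alpha$ secures. The resulting $\eta$ depends on $\alpha$ through the auxiliary choice of $a$, consistent with the statement's $\eta=\eta(a)$.
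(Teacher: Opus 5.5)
Your argument is correct and is essentially the paper's proof. Both exclude large $|\lambda|$ via Proposition~\ref{P:resolv_bds}. Both use that the zeros of the analytic Evans function $D$ on $\Omega_+(\alpha)$, which by Lemma~\ref{DA} are exactly the eigenvalues of $\mathcal{A}_\alpha[u_0]$ there, cannot accumulate, and both invoke Proposition~\ref{P:strong_spec} to leave only $\lambda=0$ on the imaginary axis.

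The differences are cosmetic. The paper argues by contradiction: a sequence of eigenvalues approaching $i\RM$ must accumulate at $0$, forcing $D\equiv 0$ by the Identity Theorem. You argue directly via finiteness of zeros on a compact set. Your auxiliary $a<\alpha$ also makes explicit that Proposition~\ref{P:resolv_bds} is applied on $\overline{\Omega_+(a)}$, rather than on all of $\Omega_+(\alpha)$ as the paper's proof loosely states.
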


\begin{proof}
Fix $\alpha>0$ satisfying \eqref{conddef1} and note from Proposition \ref{L3} that there exists an $\varepsilon>0$
such that
\[
\left\{\lambda\in\CM:\realpart{\lambda}>-\varepsilon\right\}\subset\Omega_+(\alpha).
\]
Further, Proposition \ref{P:resolv_bds} implies that the resolvent operator $\left(\lambda I-\mathcal{A}_\alpha[u_0]\right)^{-1}$ is uniformly
bounded for $\lambda\in\Omega_+(\alpha)$  with $|\lambda|$ sufficiently large.  Thus, if the stated result is false 
then there must exist a sequence of $L^2(\RM)$ eigenvalues $\{\lambda_n\}$
of $\mathcal{A}_\alpha[u_0]$ that converges to some point $\lambda^*$ on the imaginary axis.  Since the spectrum
of $\mathcal{A}_\alpha[u_0]$ is a closed set, it follows from Proposition \ref{P:strong_spec} that $\lambda^*=0$.  From Lemma \ref{DA} it 
would follow that the complex analytic function $D(\lambda)$ would have a sequence of zeroes with a finite accumulation
point in its domain of analyticity.  By the Identity Theorem, this would imply that $D(\lambda)=0$ for all $\lambda\in\Omega_+(\alpha)$.
This of course is a contradiction since $D(\lambda)$ does not vanish for $\lambda\in\Omega_+$ with $|\lambda|$ sufficiently
large (again, owing to Proposition \ref{P:resolv_bds}).  The proof is now complete.
\end{proof}

\begin{remark}
As an alternative proof that such a bounded sequence of eigenvalues $\{\lambda_n\}$ above cannot exist, simply note that
since $\mathcal{A}_\alpha[u_0]$ is Fredholm with index zero for all $\lambda\in\Omega_+(\alpha)$ it follows
that $\lambda=0$ must be an isolated eigenvalue.  This immediately implies that there cannot be a sequence of $L^2(\RM)$ eigenvalues
in $\Omega_+(\alpha)$ converging to the finite point $\lambda=0$.
\end{remark}

\subsection{Linear Exponential Decay in $L^2_\alpha(\RM)$}

Equipped with the resolvent bounds in Proposition \ref{P:resolv_bds}, we are now prepared to study the linear dynamics
generated by the (weighted) linearized operator $\mathcal{A}_\alpha[u_0]$.  Using the Hille-Yosida theorem, we first show that 
$\mathcal{A}_\alpha[u_0]$ is the generator of a $C_0$-semigroup on both $L^2(\RM)$
and on $H^1(\RM)$.  Further, owing to the characterization of the $L^2(\RM)$-spectrum of $\mathcal{A}_\alpha[u_0]$
provided in Section \ref{s42}, we then show that $e^{\mathcal{A}_\alpha[u_0]t}$ exhibits exponential decay (with
rates controlled by the size of the spectral gap $\Delta_\alpha$ in Proposition \ref{L3}), at least when restricted to the co-dimension
two subspace orthogonal to the generalized kernel of $\mathcal{A}_\alpha[u_0]$.  

As a first step, we have the following.

\begin{lemma}
Let $u_0(\cdot;k,c)$ be a smooth solitary  wave as constructed in Lemma \ref{L1}.  
For each $\alpha>0$ satisfying \eqref{conddef1} the operator $\mathcal{A}_\alpha[u_0]$ is the generator of a $C_0$-semigroup
on $L^2(\RM)$.
\end{lemma}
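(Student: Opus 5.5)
We will deduce the generation result from the Hille--Yosida theorem (in its Feller--Miyadera--Phillips form, which allows a non-contractive bound). Concretely, we aim to show three things. First, $\mathcal{A}_\alpha[u_0]$, with domain $D=H^1(\RM)$, is closed and densely defined on $L^2(\RM)$. Second, there is an $\omega>0$ with $(\omega,\infty)\subset\rho(\mathcal{A}_\alpha[u_0])$. Third, there is an $M\geq 1$ with
\[
\left\|\left(\lambda I-\mathcal{A}_\alpha[u_0]\right)^{-n}\right\|_{L^2\to L^2}\leq \frac{M}{(\lambda-\omega)^n},\qquad \lambda>\omega,\ n\geq 1.
\]

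For the domain, recall that $\mathcal{A}_\alpha[u_0]=J_\alpha\mathcal{L}_\alpha[u_0]$. We can rewrite it as a first-order differential part, $(c-u_0)(\partial_\xi-\alpha)$ up to a sign, plus a bounded remainder. The remainder consists of zeroth-order multiplication by smooth bounded functions of $u_0$ and its derivatives, composed with the bounded Fourier multipliers $(\partial_\xi-\alpha)^j(1-(\partial_\xi-\alpha)^2)^{-1}$ for $j\le 2$ and $(4-(\partial_\xi-\alpha)^2)^{-1}$, which are bounded on $L^2$ because $0<\alpha<1$. Since $0<k\le u_0<c$ by Lemma \ref{L1}, the coefficient $c-u_0$ is bounded above and below. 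It follows that $H^1(\RM)$ is the natural dense domain and that the operator is closed there; closedness reduces to that of the transport operator $(c-u_0)\partial_\xi$ with a bounded perturbation.

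For the resolvent bound, the real half-line $\{\lambda>C_1\}$ lies in $\overline{\Omega_+(a)}$ for any fixed $0<a<\alpha$, since it sits in the right half-plane (Corollary \ref{C:roots}). Proposition \ref{P:resolv_bds} then gives $\lambda\in\rho(\mathcal{A}_\alpha[u_0])$ for $\lambda\ge C_1$ together with $\|(\lambda-\mathcal{A}_\alpha[u_0])^{-1}\|\le C_2\lambda^{-2}$. This decays faster than $1/\lambda$, which at least suggests that the Hille--Yosida bounds hold.

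The main obstacle is that Hille--Yosida requires bounds on all powers $n$, and the single-power estimate does not give the uniform constant $M$ directly. Our first attempt is an energy estimate that shows $\mathcal{A}_\alpha[u_0]-\omega$ is dissipative on $L^2$ for some $\omega$. Pairing with $w\in H^1$, the principal part $\realpart{\langle (c-u_0)w',w\rangle}=\tfrac12\langle u_0'w,w\rangle$ after integrating by parts, and this is bounded by $\tfrac12\|u_0'\|_\infty\|w\|^2$. The remaining terms are all $L^2$-bounded operators. Hence
\[
\realpart{\langle\mathcal{A}_\alpha[u_0]w,w\rangle}\le\omega\|w\|_{L^2}^2
\]
for an explicit $\omega$. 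Combined with surjectivity of $\lambda-\mathcal{A}_\alpha[u_0]$ for $\lambda$ large, which follows from Proposition \ref{P:resolv_bds}, the Lumer--Phillips theorem yields a quasi-contractive $C_0$-semigroup. This makes the power bounds automatic, with $M=1$.

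If the integration-by-parts bookkeeping for the nonlocal terms turns out to be delicate, the fallback is different. We treat $\mathcal{A}^\infty_\alpha$, which generates a semigroup by Lemma \ref{L:free_decay}, and regard $\mathcal{A}_\alpha[u_0]-\mathcal{A}_\alpha^\infty$ as a perturbation. By the decomposition in Proposition \ref{P:resolv_bds}, its principal part is the first-order term $(k-u_0)(\partial_\xi-\alpha)$ with a coefficient that decays exponentially. For this perturbation we verify the same dissipativity estimate.
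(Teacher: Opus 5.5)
Your proof is correct, but it takes a different route from the paper's. The paper argues in a few lines: it takes $(0,\infty)\subset\rho(\mathcal{A}_\alpha[u_0])$ from Proposition \ref{P:gap}, extracts from Proposition \ref{P:resolv_bds} a single first-power bound $\|(\lambda I-\mathcal{A}_\alpha[u_0])^{-1}\|\le C(\lambda-M)^{-1}$ for real $\lambda>M$, and then invokes Hille--Yosida. It does not discuss the domain, closedness, or powers of the resolvent. As you point out, a first-power bound with an unspecified constant $C$ does not meet the Hille--Yosida hypotheses, so the paper's argument is shorter but not complete as written.

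You instead exploit the structure of the operator, and this is well founded. With the paper's definitions one has exactly
\[
\mathcal{A}_\alpha[u_0]w=(\partial_\xi-\alpha)\big((u_0-c)w\big)+3(\partial_\xi-\alpha)\big(1-(\partial_\xi-\alpha)^2\big)^{-1}(u_0w).
\]
Consequently:
\begin{itemize}
\item the remainder is a bounded Fourier multiplier composed with multiplication by $u_0$, so no delicate bookkeeping is needed;
\item the domain is $H^1(\RM)$, since $\inf(c-u_0)>0$;
\item your integration by parts gives quasi-dissipativity with an explicit $\omega$, and Lumer--Phillips then yields $\|e^{\mathcal{A}_\alpha[u_0]t}\|\le e^{\omega t}$ with all power bounds automatic.
\end{itemize}

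The one ingredient you still borrow from the paper is the range condition, via Proposition \ref{P:resolv_bds}. It is worth making this self-contained. The bound $C_2|\lambda|^{-2}$ stated there cannot hold for any densely defined operator: for $x$ in the domain, $\lambda(\lambda I-A)^{-1}x=x+(\lambda I-A)^{-1}Ax\to x$. So you should also drop your remark that this bound ``suggests'' the Hille--Yosida estimates. There are two clean ways to get the range condition instead:
\begin{itemize}
\item Solve $\lambda w-(u_0-c)w'=f$ explicitly as a first-order linear ODE, which gives the range condition for the transport part. Then add the bounded remainder using the bounded perturbation theorem.
\item Apply your energy estimate to the adjoint, which has the same transport-plus-bounded structure. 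Then $\lambda I-\mathcal{A}_\alpha[u_0]$ has closed and dense range for large $\lambda$.
\end{itemize}

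Your fallback via $\mathcal{A}^\infty_\alpha$ is then unnecessary. As stated, it would also require the first-order perturbation $(u_0-k)\partial_\xi$ to have relative bound less than one with respect to $\mathcal{A}^\infty_\alpha$.
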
 

\begin{proof}
The proof relies on the Hille-Yosida Theorem \cite{ER00}.  
From Proposition \ref{P:gap} it follows that
that the semi-infinite interval $(0,\infty)$ lies in the $L^2(\RM)$-resolvent set of $\mathcal{A}_\alpha[u_0]$, while it follows from Proposition \ref{P:resolv_bds}
that there exists an $M>1$ and a constant $C>0$ such that
\[
\left\|\left(\lambda I-\mathcal{A}_\alpha[u_0]\right)^{-1}\right\|_{L^2(\RM)\to L^2(\RM)} \leq C|\lambda|^{-1}
\]
for all $|\lambda|>M$.  In particular, it follows for real $\lambda>M$ that 
\[
\left\|\left(\lambda I-\mathcal{A}_\alpha[u_0]\right)^{-1}\right\|_{L^2(\RM)\to L^2(\RM)} \leq C(\lambda-M)^{-1}.
\]
The proof now follows by the Hille-Yosida Theorem.
\end{proof}

By above and classical linear semigroup theory, it follows that for each $w_0\in L^2(\RM)$ that the initial value problem (IVP)
\[
\left\{\begin{aligned}
&w_t=\mathcal{A}_\alpha[u_0]w\\
&w(0)=w_0
\end{aligned}\right.
\]
has a unique solution $w(t)=e^{\mathcal{A}_\alpha[u_0]t}w_0\in C_0\left([0,\infty);L^2(\RM)\right)$.  Next, we show that if 
the initial data $w_0$ additionally belongs to ${\rm gker}\left(\mathcal{A}_\alpha[u_0]\right)^\perp$ then 
solution to the solution to the above IVP exhibits exponential decay.  The following result is equivalent to our main result Theorem \ref{T:main}
stated in the introduction.

\begin{proposition}\label{P:lin_decay}
Let $u_0(\cdot;k,c)$ be a smooth solitary wave of the DP equation \eqref{DP}, as constructed in Lemma \ref{L1}, let $\alpha>0$
satisfying \eqref{conddef1} be fixed,
and let  $\eta>0$ be such that
\[
\Re\left(\sigma\left(\mathcal{A}_\alpha[u_0]\right)\setminus\{0\}\right)<-\eta.
\]
Then there exists a constant $C>0$ such that for all $w_0\in  L^2(\RM)\cap {\rm gker}\left(\mathcal{A}_\alpha[u_0]\right)^\perp$ we have
\[
\left\|e^{\mathcal{A}_\alpha[u_0]t}w_0\right\|_{L^2(\RM)}\leq Ce^{-\eta t}\|w_0\|_{L^2(\RM)}
\]
for all $t>0$.
\end{proposition}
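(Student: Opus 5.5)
The proof will use the Gearhart--Pr\"uss theorem: a $C_0$-semigroup $e^{Bt}$ on a Hilbert space $H$ satisfies $\|e^{Bt}\|\le Ce^{-\eta t}$ if and only if the half-plane $\{\realpart{\lambda}>-\eta\}$ lies in $\rho(B)$ and $\sup_{\realpart{\lambda}>-\eta}\|(\lambda I-B)^{-1}\|<\infty$. Equivalently, $\{\realpart{\lambda}\ge 0\}\subset\rho(B+\eta)$ with a uniformly bounded resolvent there.

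I apply this to the restriction $B$ of $\mathcal{A}_\alpha[u_0]$ to the closed invariant subspace $H:=(1-\Pi)L^2(\RM)$, with $\Pi$ the spectral projection from Proposition \ref{P:gker}. I read the hypothesis $w_0\in{\rm gker}(\mathcal{A}_\alpha[u_0])^\perp$ as $\Pi w_0=0$, that is, $\left<\eta_j,w_0\right>=0$ for $j=1,2$; note that $\ker \Pi$ is the annihilator of the adjoint generalized kernel. Since $\Pi$ commutes with $\mathcal{A}_\alpha[u_0]$, the semigroup from the preceding lemma leaves $H$ invariant, and its restriction is a $C_0$-semigroup generated by $B$. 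Moreover $\sigma(B)=\sigma(\mathcal{A}_\alpha[u_0])\setminus\{0\}$, because $0$ is an isolated eigenvalue of finite multiplicity. For $\lambda\ne 0$ in the resolvent set we have $(\lambda-B)^{-1}=(\lambda-\mathcal{A}_\alpha[u_0])^{-1}(1-\Pi)$. By hypothesis, together with Proposition \ref{P:gap}, the closed half-plane $\{\realpart{\lambda}\ge-\eta\}$ is contained in $\rho(B)$. If necessary I shrink $\eta$ slightly, using the strict inequality and the fact that the spectrum near $\realpart\lambda=-\eta$ stays away from it for large $|\lambda|$ by Proposition \ref{P:resolv_bds}.

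It remains to bound $\|(\lambda-B)^{-1}\|$ uniformly on $\{\realpart{\lambda}\ge-\eta\}$, and I split this half-plane into a large-$|\lambda|$ part and a bounded part.

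\textbf{Large $|\lambda|$.} I choose $a\in(0,\alpha)$ close enough to $\alpha$ that $\{\realpart{\lambda}\ge-\eta\}\cap\{|\lambda|\ge C_1\}\subset\overline{\Omega_+(a)}$. This is possible provided $\eta$ is less than the asymptotic gap $a(c-k)$ and the gap $\Delta_a$, and we may shrink $\eta$ to ensure this. Proposition \ref{P:resolv_bds} then gives $\|(\lambda-\mathcal{A}_\alpha[u_0])^{-1}\|\le C|\lambda|^{-2}$ there, and composing with the bounded operator $1-\Pi$ gives a uniform bound.

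\textbf{Bounded part.} The remaining set $K=\{\realpart{\lambda}\ge-\eta,\ |\lambda|\le C_1\}$ is compact. The map $\lambda\mapsto(\lambda-B)^{-1}$ is analytic on $\rho(B)\supset K$, including at $\lambda=0$: the pole of $(\lambda-\mathcal{A}_\alpha[u_0])^{-1}$ at $0$ lives entirely in $\operatorname{Ran}\Pi$ and is removed by the factor $1-\Pi$. A continuous function is bounded on a compact set, so the resolvent is bounded on $K$.

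Combining the two regions, $\sup_{\realpart\lambda\ge-\eta}\|(\lambda-B)^{-1}\|<\infty$. The Gearhart--Pr\"uss theorem, applied to $B+\eta$, then yields $\|e^{Bt}\|_{H\to H}\le C e^{-\eta t}$, which is the claim since $e^{\mathcal{A}_\alpha[u_0]t}w_0=e^{Bt}w_0$ for $w_0\in H$.

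The main obstacle is the large-$|\lambda|$ region. One has to check that, for $\eta$ below both spectral gaps, the strip $-\eta\le\realpart\lambda<0$ with large $|\lambda|$ really lies inside $\overline{\Omega_+(a)}$ for some admissible $a<\alpha$. This rests on the vertical asymptote $\realpart\lambda\to-a(c-k)$ of $\partial\Omega_+(a)$ from \eqref{asb} and on Corollary \ref{C:roots}. If the stated $\eta$ is too large for this, the argument only delivers the estimate for a slightly smaller rate. In that case I would first prove the bound for that smaller rate and then argue that the Pr\"uss bound holds for any $\eta$ below $\min\{\Delta_\alpha,\text{point-spectral gap}\}$, which is the intended meaning of the hypothesis.
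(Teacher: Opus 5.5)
Your proposal is correct and is essentially the paper's own proof. The paper applies Pr\"uss's theorem with $Q=1-\Pi$, which is equivalent to your Gearhart--Pr\"uss argument for the part of $\mathcal{A}_\alpha[u_0]$ in $(1-\Pi)L^2(\RM)$. It then uses Proposition \ref{P:resolv_bds} for large $|\lambda|$ and holomorphy of $(\lambda I-\mathcal{A}_\alpha[u_0])^{-1}(1-\Pi)$ on the compact remainder, as you do. Your choice of $a<\alpha$ makes explicit a domain issue that the paper passes over.

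The hedging about shrinking $\eta$ is unnecessary. The point $-\Delta_\alpha$ (the $\sigma=0$ point of the essential spectrum) lies in $\sigma(\mathcal{A}_\alpha[u_0])\setminus\{0\}$, so the hypothesis already forces $\eta<\Delta_\alpha<\alpha(c-k)$. Continuity of $a\mapsto\Delta_a$ then gives some $a<\alpha$ with $\Delta_a>\eta$, and hence $\{\realpart{\lambda}\ge-\eta\}\subset\overline{\Omega_+(a)}$ at exactly the stated rate.
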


Note that the existence of such an $\eta>0$ in Proposition \ref{P:lin_decay} is guaranteed by our analysis in Section \ref{s42} and Section \ref{S:gap}.  
The proof of the above linear decay result is based on the following result of Pr\"uss

\begin{theorem}[\cite{P84}, Corollary 4]\label{T:Pruss}
Let $X$ be a Hilbert space and assume that $B$ is the infinitesimal generator of a $C_0$-smigroup on $X$.  Further, let $Q$ be a finite co-dimension 
spectral projection associated with $B$.  If there exists a $\eta>0$ such that
\[
\sup\left\{\left\|\left(\lambda I-B\right)^{-1}Q\right\|_{X\to X}:{{\realpart{\lambda}}}>-\eta\right\}<\infty
\]
then there exists a constant $C>0$ such that 
\[
\left\|e^{Bt}Qf\right\|_X\leq C e^{-\eta t}\|f\|_X
\]
for all $f\in X$.
\end{theorem}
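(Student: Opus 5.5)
The statement is the abstract Gearhart--Pr\"uss criterion. I would prove it by restricting to the range of $Q$ and then combining Plancherel's theorem with a Datko-type argument.

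First, since $Q$ is a spectral projection for $B$, it commutes with $e^{Bt}$ and with $(\lambda I-B)^{-1}$. Hence $Y:=QX$ is a closed invariant subspace, and it is a Hilbert space in the inherited inner product. The part $B_Y$ of $B$ in $Y$ generates the $C_0$-semigroup $e^{Bt}|_Y$. For $\lambda\in\rho(B)$ its resolvent is $(\lambda I-B)^{-1}|_Y$. The hypothesis therefore says that $M:=\sup_{\realpart{\lambda}>-\eta}\|(\lambda I-B_Y)^{-1}\|<\infty$, after extending across any points of $\sigma(B)$ lying in that half-plane but belonging to the complementary spectral set; this is automatic, since $B_Y$ only sees the spectrum attached to $Q$. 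It then suffices to show $\|e^{B_Yt}\|_{Y\to Y}\le Ce^{-\eta t}$, because $\|Qf\|\le\|Q\|\|f\|$.

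Next, I would gain a little room. The Neumann series gives $\lambda\in\rho(B_Y)$ whenever $\operatorname{dist}(\lambda,\{\realpart{\mu}>-\eta\})<1/M$, with $\|(\lambda I-B_Y)^{-1}\|\le 2M$ on the half-plane $\realpart{\lambda}>-\eta-\tfrac{1}{2M}$. Set $\eta':=\eta+\tfrac{1}{4M}$ and $S:=B_Y+\eta'$. Then $S$ generates a $C_0$-semigroup on $Y$ with $\sup_{\realpart{\mu}>-1/(4M)}\|(\mu I-S)^{-1}\|\le 2M$. It suffices to prove that $e^{St}$ is uniformly bounded, since then $\|e^{B_Yt}\|\le Ce^{-\eta' t}\le Ce^{-\eta t}$.

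Finally, I would run the Hilbert-space Plancherel argument. Choose $\omega$ larger than the growth bound of $S$. For $x\in Y$, the function $t\mapsto e^{-\omega t}e^{St}x$ lies in $L^2(0,\infty;Y)$, and its Fourier--Laplace transform is $s\mapsto(\omega+is-S)^{-1}x$. The resolvent identity
\[
(is-S)^{-1}=(\omega+is-S)^{-1}+\omega(is-S)^{-1}(\omega+is-S)^{-1}
\]
together with the uniform bound on $(is-S)^{-1}$ shows that $s\mapsto(is-S)^{-1}x$ belongs to $L^2(\RM;Y)$, with norm at most $C\|x\|$. Plancherel's theorem in the Hilbert space $Y$, which is the step where the Hilbert structure is essential, then gives $\int_0^\infty\|e^{St}x\|^2dt\le C\|x\|^2$. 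The same argument applied to the adjoint semigroup $e^{S^*t}$ gives the analogous bound, since $S^*$ has the same uniform resolvent bound. The standard duality estimate
\[
t\,|\langle e^{St}x,y\rangle|=\Big|\int_0^t\langle e^{Ss}x,e^{S^*(t-s)}y\rangle\,ds\Big|\le C\|x\|\|y\|
\]
then yields $\|e^{St}\|\le C/t$ for large $t$, and local boundedness handles small $t$. Together these give $\sup_t\|e^{St}\|<\infty$ (in fact decay). The main obstacle is this last step: the justification that the vertical-line Laplace inversion is legitimate in $L^2(\RM;Y)$, carried out via the resolvent identity, Plancherel, and the limit $\omega\downarrow0$.
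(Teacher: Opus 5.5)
The paper does not prove this statement: it quotes Pr\"uss \cite{P84} and uses the result as a black box in the proof of Proposition~\ref{P:lin_decay}. Your proposal is a self-contained proof, namely the standard Hilbert-space Gearhart--Pr\"uss argument (Plancherel plus a Datko-type duality bound) applied to the part $B_Y$ of $B$ in $Y=QX$. Compared with the citation, it makes two things explicit that the paper leaves implicit.
\begin{itemize}
\item The hypothesis has to be read as a bound on $(\lambda I-B_Y)^{-1}$, which is holomorphic across the points of the complementary spectral set. In the application this means across $\lambda=0$.
\item Your Neumann-series enlargement to $\realpart{\lambda}>-\eta-\frac{1}{2M}$ is what upgrades the bare Gearhart--Pr\"uss conclusion $\omega_0(B_Y)\le-\eta$. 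On its own that conclusion only gives $C_\varepsilon e^{(-\eta+\varepsilon)t}$; the enlargement gives the sharp rate $e^{-\eta t}$ claimed in the statement.
\end{itemize}
The reduction to $Y$, the resolvent identity, the transfer of the resolvent bound to $S^*$, and the duality estimate $\|e^{St}\|\le C/t$ are all correct.

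The step that does not close as written is the one you flag. Knowing $s\mapsto(is-S)^{-1}x\in L^2(\RM;Y)$ does not yet let you apply Plancherel on the imaginary axis. That would require $(is-S)^{-1}x$ to be the Fourier transform of $\mathbf{1}_{t\ge0}e^{St}x$, i.e.\ that this function is already in $L^2$, which is what you are trying to prove. Letting $\omega\downarrow0$ is only legitimate while $\omega$ stays above the growth bound $\omega_0(S)$, and a priori that bound could be positive. Either of two standard patches closes the gap.

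(a) Use the same resolvent identity with $\xi+is$ in place of $is$. It gives $\int_\RM\|(\xi+is-S)^{-1}x\|^2\,ds\le C\|x\|^2$ uniformly for $0<\xi\le\omega$, and Plancherel gives the same bound for $\xi\ge\omega$. So $\lambda\mapsto(\lambda I-S)^{-1}x$ lies in the $Y$-valued Hardy space $H^2$ of the right half-plane. The Paley--Wiener theorem (valid because $Y$ is a Hilbert space) together with uniqueness of the Laplace transform then yields $\int_0^\infty\|e^{St}x\|^2dt\le C\|x\|^2$.

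(b) Alternatively, argue by contradiction and suppose $\omega_0(S)\ge0$.
\begin{itemize}
\item Plancherel on the lines $\realpart{\lambda}=\xi\in(\omega_0(S),\omega]$, combined with the uniform bound above and monotone convergence as $\xi\downarrow\omega_0(S)$, gives $\int_0^\infty e^{-2\omega_0(S)t}\|e^{St}x\|^2dt\le C\|x\|^2$.
\item The same holds for $S^*$.
\item Your duality estimate applied to $e^{(S-\omega_0(S))t}$ then gives $\|e^{(S-\omega_0(S))t}\|\le C/t<1$ for large $t$.
\item This rescaled semigroup would then have negative growth bound, contradicting that its growth bound is $0$.
\end{itemize}
Hence $\omega_0(S)<0$.

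With either patch your proof is complete.
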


\begin{proof}[Proof of Proposition \ref{P:lin_decay}]
To begin, let $\Pi$ be the rank-$2$ spectral projection onto ${\rm gker}\left(\mathcal{A}_\alpha[u_0]\right)$, 
and let $Q=I-\Pi$ be the complementary projection.  By Theorem \ref{T:Pruss} it is sufficient to show prove
there exists a constant $M>0$ such that
\[
\left\|\left(\lambda I-\mathcal{A}_\alpha[u_0]\right)^{-1}Qf\right\|_{L^2(\RM)}\leq M\|f\|_{L^2(\RM)}
\]
for all $f\in L^2(\RM)$ and all ${{\realpart{\lambda}}}>-\eta$.  To this end, note by Proposition \ref{P:resolv_bds} that
there exists constants $C_1,M_1>0$ such that
\[
\left\|(\lambda I-\mathcal{A}_\alpha[u_0])^{-1}\right\|_{L^2(\RM)\to L^2(\RM)}\leq M_1~~{\rm for~all}~~{{\realpart{\lambda}}}>-\eta,~~|\lambda|>C_1,
\]
providing a uniform bound on the resolvent operator for large $\lambda$.
It remains to control the resolvent operator on the bounded set 
\[
\mathcal{D}:=\left\{\lambda\in\CM:{{\realpart{\lambda}}}>-\eta,~~|\lambda|< C_1+1\right\}.
\]

To this end, note that Proposition \ref{P:gap} implies that $\lambda=0$ is the only eigenvalue of $\mathcal{A}_\alpha[u_0]$ in the interior of $\mathcal{D}$.
It follows that the operator-valued function $\lambda\mapsto\left(\lambda I-\mathcal{A}_\alpha[u_0]\right)^{-1}Q$ is holomorphic
on the compact set $\overline{\mathcal{D}}$, and hence that there exists a constant  $M_2>0$ such that
\[
\left\|(\lambda I-\mathcal{A}_\alpha[u_0])^{-1}Q\right\|_{L^2(\RM)\to L^2(\RM)}\leq M_2~~{\rm for~all}~~\lambda\in\overline{\mathcal{D}}.
\]
It follows that
\[
\left\|(\lambda I-\mathcal{A}_\alpha[u_0])^{-1}Q\right\|_{L^2(\RM)\to L^2(\RM)}\leq \max\left\{M_1,M_2\right\}~~{\rm for~all}~~{{\realpart{\lambda}}}>-\eta,
\]
as required.
\end{proof}

\section{Towards Nonlinear Asymptotic Stability}\label{S:nonlinear_try}

As mentioned in the introduction, we are currently unable to extend the methodologies above to upgrade Proposition \ref{P:lin_decay} to the nonlinear level.
In this section we will outline the challenges encountered in hopes that it sparks further development towards a nonlinear result.  In this section, we will
not go deep into analytical details, but instead will focus on identifying the underlying issues.

Throughout this section, we will assume that $u_0(\cdot;k,c)$ is a smooth 1-soliton of the DP equation \eqref{DP} as constructed in Lemma \ref{L1},
and we assume that the hypotheses of Proposition \ref{P:lin_decay} (or equivalently, Theorem \ref{T:main}) hold.  From the discussion directly
below the statement of Theorem \ref{T:main}, it is natural to expect that a small perturbation of $u_0$ will evolve into a solution of \eqref{DP} that behaves
for large time approximately as
\[
u(\xi,t) \approx u_0\left(x-(c+\beta)t+\gamma;k,c+\beta\right)+\mathcal{O}\left(e^{-\eta t}\right),
\]
where here the approximation is in the $L^2_\alpha(\RM)$-norm.  That is, in $L^2_\alpha(\RM)$ a small perturbation of $u_0$ should evolve into 
a solitary wave of \eqref{DP} with the same asymptotic end state but slightly different wave speed and global phase.  As such, is natural to decompose
solutions near $u_0$ as
\[
u(x,t) = u_0\left(y(x,t);k,c(t)\right)+v(y(x,t),t)
\]
where
\[
y(x,t) = x-\int_0^t c(s)ds + \gamma(t),
\]
where the modulation functions $c(t)$ and $\gamma(t)$ are chosen appropriately\footnote{A standard choice, for example, is to force orthogonality
of the modulated perturbation $v(y,t)$ to the generalized kernel of $\mathcal{A}_\alpha[u_0]$.}.  The perturbation $v$ would then satisfy
a nonlinear equation of the form
\[
v_t=\mathcal{A}_{\alpha,c(t)}v + \mathcal{N}_\alpha(v),
\]
where here $\mathcal{A}_{\alpha,c(t)}$ denotes the linearization of \eqref{DP} about the modulated wave $u_0\left(y(x,t);k,c(t)\right)$ and $\mathcal{N}_\alpha(v)$
denotes nonlinear terms.  Since the coefficients of 
$\mathcal{A}_{\alpha,c(t)}$ depend on $t$,  we rewrite the above as a linear evolution equation of the form\footnote{Here, for this discussion
we are using that the linear estimates in Proposition \ref{P:lin_decay} could be upgraded to $H^1(\RM)$.}
\begin{equation}\label{e:duhamel1}
v_t = \mathcal{A}_\alpha[u_0]v + \left(\mathcal{A}_{\alpha,c(t)}-\mathcal{A}_\alpha[u_0]\right)v+\mathcal{N}_\alpha(v),
\end{equation}
where now we treat the difference $\mathcal{A}_{\alpha,c(t)}-\mathcal{A}_\alpha[u_0]$ as a nonlinear forcing term.  

Using classical dynamical systems methodology, the goal would be to now rewrite \eqref{e:duhamel1} as
\[
v(t) = e^{\mathcal{A}_\alpha[u_0]t}v(0)+\int_0^t e^{\mathcal{A}_\alpha[u_0](t-s)}\left[\left(\mathcal{A}_{\alpha,c(s)}-\mathcal{A}_\alpha[u_0]\right)v(s)+\mathcal{N}_\alpha(v(s))\right]ds
\]
and to iterate the above representation for $v$ to show that solutions
exist and necessarily decay exponentially (once projected off of the generalized kernel of $\mathcal{A}_\alpha[u_0]$) 
and induce appropriate bounds on the modulation functions $c(t)$ and $\gamma(t)$.  However,
a serious issue arises in that the $uu_{xxx}$ term in \eqref{DP} leads to a loss of derviatves above: indeed, by a quick calculation one can show that attempting to 
measure the right hand side of \eqref{e:duhamel1} in  $H^1(\RM)$ requires, (among other things) control of the perturbation in $H^2(\RM)$.  This loss of regularity
in iterating \eqref{e:duhamel1} is the primary significant difficulty in extending our work to the nonlinear level.

We note that a similar issue was present even back in the seminal work of Pego \& Weinstein \cite{Pego1997}.  In their nonlinear analysis of the generalized KdV equation,
they similarly encountered an iteration scheme were derivatives were lost.  In their case, however, their argument was saved by the establishment 
of a so-called ``linear smoothing estimate'' which provided a bound of the $H^2(\RM)$-norm of $e^{L^{\rm KdV}_{\alpha} t}f$ 
in terms of the $H^1(\RM)$ norm of $f$.  As such, the derivative lost in their iteration scheme could be regained through an appropriate linear estimate, allowing
Pego \& Weinstein to close their iteration and achieve a nonlinear stability result.  

Unfortunately, however, for the DP equation \eqref{DP} such a linear smoothing estimate is seemingly not possible.  
Indeed, a straightforward Fourier analysis shows that
\[
\left\|\partial_\xi^n e^{\mathcal{A}_\alpha^\infty t}f\right\|_{L^2(\RM)} = \int_\RM |\sigma|^{2n}e^{\realpart{\lambda(\sigma)} t}|\hat{f}(\sigma)|^2 	d\sigma
\]
where $\lambda(\sigma)$ is given in \eqref{lmroots}.  In our case, the analysis in Section \ref{S:ess_spec} shows that
\[
\lim_{|\sigma|\to\infty}\realpart{\lambda(\sigma)}=-\alpha(c-k)<0
\]
so that, in particular, the function
\[
|\sigma|^{2n}e^{{\text{Re}}(\lambda(\sigma)) t}
\]
is not uniformly bounded\footnote{Note for the KdV equation, for example, $\realpart{\lambda(\sigma)}=-3\alpha\sigma^2$ and a brief rescaling
argument shows that $\sup_{\sigma\in\RM}\left(|\sigma|^{2n}e^{-3\alpha\sigma^2 t}\right)$ exhibits polynomial decay in time.} 
as a function of $\sigma\in\RM$ for any $t>0$.  Consequently, the fact that the essential spectrum of $\mathcal{A}_\alpha[u_0]$
is asymptotically vertical precludes the ability to establish a linear smoothing estimate akin to  \cite[Theorem 4.2]{Pego1997} for the DP equation \eqref{DP}.

\begin{remark}
There have been previous successful attempts at using Pego \& Weintein's methodology to establish asymptotic stability of solitary waves in cases
where the weighted essential spectrum is asymptotically vertical: see, for instance, \cite{Miller1996,Simpson2008}.  In those cases, however,
the evolution equation studied does not include a nonlinear term in the highest order derivaitve and, as such, the authors are able
to absorb terms a priori leading to a loss of derviatives through a simple time rescaling.  While such a rescaling is possible for the DP equation,
it does not absorb the problematic term coming from the nonlinear dispersive term $uu_{xxx}$.  Consequently, it seems a new technique or insight
is needed in order to establish nonlinear asymptotic stability in the present case.
\end{remark}

While the nonlinear asymptotic stability for DP is still open, we hope that the analysis conducted here serves as a first important step in this direction.
In particular, by combining our analysis above with the monotonicity and rigidity approach from \cite{CLLW24} one may be able to
complete a nonlinear argument.  This will be explored in future work.

\bibliographystyle{abbrv}
\bibliography{Prop}

\end{document}